\numberwithin{equation}{section}
\newtheorem{thm}{Theorem}[section]
\newtheorem{prop}[thm]{Proposition}
\newtheorem{lem}[thm]{Lemma}
\theoremstyle{definition}
\theoremstyle{remark}
\newtheorem{rem}[thm]{Remark}
\newcommand{\al}{\alpha}
\newcommand{\be}{\beta}
\newcommand{\ga}{\gamma}
\newcommand{\de}{\delta}
\newcommand{\e}{\varepsilon}
\newcommand{\la}{\lambda}
\newcommand{\La}{\Lambda}
\newcommand{\om}{\omega}
\newcommand{\Om}{\Omega}
\newcommand{\sgm}{\sigma}
\renewcommand{\th}{\theta}
\newcommand{\p}{\partial}
\newcommand{\I}{\infty}
\newcommand{\Sc}[1]{\mathcal{#1}}
\newcommand{\F}{\Sc{F}}
\newcommand{\cL}{\Sc{L}}
\newcommand{\Bo}[1]{\mathbb{#1}}
\newcommand{\R}{\Bo{R}}
\newcommand{\T}{\Bo{T}}
\newcommand{\HP}{\Bo{P}}
\newcommand{\osc}{_{\mathrm{osc}}}
\newcommand{\lec}{\lesssim}
\newcommand{\gec}{\gtrsim}
\newcommand{\hhat}{\widehat}
\newcommand{\bbar}{\overline}
\newcommand{\ti}{\widetilde}
\newcommand{\shugo}[1]{\{ #1\}}
\newcommand{\Shugo}[2]{\big\{ \, #1 \, \big| \, #2 \, \big\}}
\newcommand{\LR}[1]{{\langle #1 \rangle }}
\newcommand{\norm}[2]{\big\| #1 \big\| _{#2}}
\newcommand{\tnorm}[2]{\| #1 \| _{#2}}
\newcommand{\eq}[2]{\begin{equation} \label{#1} \begin{split} #2 \end{split} \end{equation}}
\newcommand{\IVP}[2]{\eq{#1}{\left\{ \begin{split} #2 \end{split} \right.}}
\newcommand{\eqq}[1]{\begin{align*} #1 \end{align*}}
\newcommand{\eqs}[1]{\begin{gather*} #1 \end{gather*}}
\newcommand{\mat}[1]{\begin{smallmatrix} #1 \end{smallmatrix}}
\newcommand{\hx}{\hspace{10pt}}
\newcommand{\hxx}{\hspace{30pt}}
\begin{document}

\title[The rotating Navier-Stokes eq. with fractional Laplacian]
{Global solvability of the rotating Navier-Stokes equations with fractional Laplacian in a periodic domain} 

\author{Nobu Kishimoto}
\address{
Research Institute for Mathematical Sciences, Kyoto University, 
Kitashirakawa-Oiwakecho, Sakyo, Kyoto 606-8502, Japan
} 
\email{nobu@kurims.kyoto-u.ac.jp} 

\author{Tsuyoshi Yoneda}
\address{
Graduate School of Mathematical Sciences, University of Tokyo,
Komaba 3-8-1 Meguro, Tokyo 153-8914, Japan
}
\email{yoneda@ms.u-tokyo.ac.jp}

\date{\today}

\begin{abstract}
We consider existence of global solutions to equations for three-dimensional rotating fluids in a periodic frame provided by a sufficiently large Coriolis force.
  The Coriolis force appears in almost all of the
models of  meteorology and geophysics dealing with large-scale
phenomena. 
In the spatially decaying case,  Koh, Lee and Takada (2014) showed existence for the large times of solutions of the rotating Euler equations
provided by the large Coriolis force. 
In this case the resonant equation does not appear anymore.
In the periodic case, however, the resonant equation appears, 
and thus the main subject in this case is to show existence of global solutions to the resonant equation.
Research in this direction was initiated by Babin, Mahalov and Nicolaenko (1999) who treated the rotating Navier-Stokes equations on general periodic domains.
On the other hand, Golse, Mahalov and Nicolaenko (2008) considered bursting dynamics of the resonant equation in the case of a cylinder with no viscosity. Thus we may not expect to show global existence of solutions to the resonant equation without viscosity in the periodic case.
In this paper we show existence of global solutions for fractional Laplacian case (with its power strictly less than the usual Laplacian) in the periodic domain with the same period in each direction.
The main ingredient is an improved estimate on resonant three-wave interactions, which is based on a combinatorial argument.
\end{abstract}

\keywords{Navier-Stokes equations, Coriolis force, global regularity, resonant equation, divisor bound}

\subjclass[2010]{76U05,42B05}

\maketitle


\section{Introduction}

We consider the rotating three-dimensional Navier-Stokes equations with the fractional Laplacian:

\begin{eqnarray}\label{NSC}
& &\partial_t u +(u \cdot \nabla) u+ \Omega e_3 \times u +\nu (-\Delta)^\alpha u =- \nabla
p
\quad\text{in}\quad
\mathbb{T}^3:=[0,2\pi)^3
,\\
& &
 \nabla \cdot u=0\quad
 and
 \quad
  \ u|_{t=0}=u_0,
\nonumber
\end{eqnarray}
where $u=u(t)=(u^{1}(t,x),u^{2}(t,x), u^{3}(t,x))$ is the unknown
velocity vector field and $p=p(t,x)$ is the unknown scalar pressure
at the point $x=(x_1,x_2,x_3)\in [0,2\pi)^3$ in space and time $t>0$
while $u_0=u_0(x)$ is the given initial velocity field. Here $\Omega
\in \mathbb{R}$ is the Coriolis parameter, which is twice the angular
velocity of the rotation around the vertical unit vector
$e_3=(0,0,1)$, and $\nu >0$ is the kinematic viscosity coefficient. 
By $\times$ we denote the exterior product, and hence, the
Coriolis term is represented by%
\footnote{
Vectors in $\mathbb{R}^3$ should be considered as \emph{column vectors}, but we will write them as \emph{row vectors} throughout the paper for notational convenience.
}
$e_3 \times u= Ju$ with the
corresponding skew-symmetric $3 \times 3$ matrix $J$, namely,
\eqq{J:=\left( \begin{matrix} 0&-1&0\\ 1&0&0\\ 0&0&0\end{matrix}\right) .}
Note that $J$ restricted to divergence free vector fields is in fact a non-local zero order pseudo-differential operator.

The Coriolis force plays a significant role in the large scale flows
considered in meteorology and geophysics. In 1868 Kelvin observed
that a sphere moving along the axis of uniformly rotating water
takes with it a column of liquid as if this were a rigid mass (see
\cite{F} for references). After that, Taylor \cite{Ta} and Proudman
\cite{P} did important contributions.
Mathematically, linear wave dynamics for rotating fluids was
investigated by Poincar\'e \cite{Po}, more recently, by Babin, Mahalov and Nicolaenko
\cite{BMN1,BMN2} using the fully Navier-Stokes equations in a periodic domain.

Throughout this paper we essentially use the spatial Fourier transform denoted by $\F$ or $\hhat{\;\cdot\;}$:
\eqq{u(x)=\sum _{n\in \Bo{Z}^3}\hhat{u}(n)e^{in\cdot x}
\quad\text{with}\quad(\F u)(n)= \hhat{u}(n):=\frac{1}{(2\pi )^{3}}\int _{\T ^3}u(x)e^{-in\cdot x}\,dx.}
Let us define the inhomogeneous Sobolev  spaces $H^s$ as follows:
\begin{equation*}
H^s(\mathbb{T}^d):=\bigg\{ u=\sum_{n\in\mathbb{Z}^d}\hat u(n)e^{in\cdot x}\,\bigg| \,\|u\|_{H^s}:=\Big( \sum_{n\in\mathbb{Z}^d}(1+|n|^2)^{s}|\hat u (n)|^2\Big) ^{1/2}<\infty \bigg\}.
\end{equation*}
The homogeneous version $\dot H^s$ can be defined as 
\begin{equation*}
\dot H^s(\mathbb{T}^d):=\bigg\{ u=\sum_{n\in\mathbb{Z}^d}\hat u(n)e^{in\cdot x}\,\bigg|\, \|u\|_{\dot H^s}:=\Big( \sum_{n\in\mathbb{Z}^d}|n|^{2s}|\hat u (n)|^2\Big) ^{1/2}<\infty \bigg\}.
\end{equation*}

We will assume that all the vector fields in this paper are mean-zero.
This assumption is valid from the following observation:
Let
\eqq{f(t):=\left( \begin{matrix} \hhat{u}_{0}^1(0)\cos \Om t+\hhat{u}^2_0(0)\sin \Om t, & -\hhat{u}_0^1(0) \sin \Om t +\hhat{u}_0^2(0)\cos \Om t, & \hhat{u}_0^3(0)\end{matrix}\right) .}
Note that $f(t)$, which is the solution to the following ODE:
\begin{equation*}
f'(t) + \Omega Jf(t) = 0,\qquad f(0)=\hhat{u}_0(0),
\end{equation*}
is the average over $\T^3$ of the velocity component of the solution to (\ref{NSC}) at $t$.
Then the following invertible transforms
\begin{equation*}
 u(t,x) \mapsto u\left(t,x+\int _0^t f(s)ds\right) -f(t)\quad\text{and}\quad p(t,x) \mapsto p\left(t,x+\int _0^t f(s)ds\right)
\end{equation*}
preserve the equation \eqref{NSC}, and the new velocity field has zero mean for all time. 
We therefore do not distinguish homogeneous and inhomogeneous Sobolev spaces.

Let us recall the result of Babin \mbox{et al.} as the starting point of our work:
\begin{thm}[\cite{BMN2}]\label{AA}
Let $s>1/2$ and $\mathbf{T}^3$ be a torus with \emph{arbitrary} period (distinguished from $\mathbb{T}^3$). 
Let $u_0\in H^s(\mathbf{T}^3)$ be a divergence-free vector field. Then there exists a positive
$\Omega_0$ depending on $\| u_0\| _{H^s}$ and the period of torus 
such that
for all $|\Omega|\geq \Omega_0$, there is a unique global solution
$$
u(t)\in C([0,\infty);H^{s}(\mathbf{T}^3))\cap L^2((0,\infty );H^{s+1}(\mathbf{T}^3))
$$
to the equation (\ref{NSC}) with $\alpha=1$.
\end{thm}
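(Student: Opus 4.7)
The plan is to derive a uniform-in-time $H^s$-bound for $|\Omega|$ large and combine it with local well-posedness. Since $s>1/2$ and $\alpha=1$, Fujita--Kato type theory furnishes a local solution in $C([0,T);H^s)\cap L^2(0,T;H^{s+1})$ whose lifespan is bounded below by a function of $\|u_0\|_{H^s}$ and $\nu$, so by the standard continuation criterion the theorem reduces to showing that the $H^s$-norm stays uniformly bounded on $[0,\infty)$. To extract the rotation effect I would diagonalize the linear Coriolis operator: after the Leray projection, $J$ acts on the Fourier coefficient at $n\in\mathbb{Z}^3\setminus\{0\}$ with eigenvalues $\pm in_3/|n|$, together with a kernel supported on $\{n_3=0\}$. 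Writing $P^a$, $a\in\{+,-,0\}$, for the associated spectral projections and passing to the interaction picture $v(t):=e^{\Omega tL}u(t)$ with $L$ the corresponding skew-adjoint generator, the equation for $v$ has dissipation $\nu(-\Delta)$ together with a bilinearity whose $(k,m,n;a,b,c)$-component carries an oscillatory phase $e^{i\Omega t\omega}$ with
\[
\omega=\omega_{k,m,n}^{a,b,c}:=a\,\frac{k_3}{|k|}+b\,\frac{m_3}{|m|}-c\,\frac{n_3}{|n|},\qquad k+m=n.
\]

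Next, split the bilinearity as $B=B_{\mathrm{res}}+B_{\mathrm{nr}}$ according to whether $\omega=0$ or not. A Poincar\'e--Dulac normal-form transformation integrates $B_{\mathrm{nr}}$ by parts in $t$, producing a remainder of size $1/(|\Omega|c(N))$ once one truncates to a dyadic band $|n|\le N$, where $c(N)>0$ is a quantitative lower bound for the nonzero phases of triads supported in that band; the high-frequency tail $|n|>N$ is absorbed by parabolic smoothing from $\nu(-\Delta)$. Optimizing in $N$ shows that, for $|\Omega|$ sufficiently large, the $H^s$-dynamics of $v$ matches that of the purely resonant system $\partial_t w+\nu(-\Delta)w+B_{\mathrm{res}}(w,w)=0$ up to an $o(1)$ error. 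The core of the proof is therefore to establish global $H^s$-regularity of $w$: the constraint $\omega=0$ together with $k+m=n$ partitions triads into ``catalytic'' interactions (one of $a,b,c$ equal to $0$, so the vertically-averaged component participates), which reduce to a coupling of 2D Navier--Stokes with a linear advection--diffusion and inherit the classical global 2D bounds, and ``true'' three-wave interactions ($abc\ne 0$), for which the algebraic identity $\omega=0$ forces a cancellation in the symbol of $B_{\mathrm{res}}$ that yields an energy-type a priori estimate.

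Once $w$ is globally bounded in $H^s$, a continuity/bootstrap argument comparing $v$ to $w$ via the $O(1/|\Omega|)$ gain from the normal form closes the global estimate as soon as $|\Omega|\ge\Omega_0(\|u_0\|_{H^s},\text{period})$. I expect the resonant analysis to be the main obstacle: on a torus of \emph{arbitrary} period the arithmetic structure of the set $\{\omega=0\}\cap\{k+m=n\}$ is delicate, and the dependence of $\Omega_0$ on the period enters precisely through the quantitative lower bound $c(N)$ for non-resonant phases and through the combinatorics governing the resonant set. The other ingredients---local existence, normal form in a fast-oscillating equation, and parabolic smoothing---are by now standard machinery in dispersive/averaging theory for rotating fluids.
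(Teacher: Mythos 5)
The paper does not reprove Theorem~\ref{AA}: it is cited from \cite{BMN2} and only its architecture is recalled (Sections~1, 3--4) as the template for the paper's own Theorem~\ref{main}.  Your outline correctly reproduces that architecture up to the last step, but the way you close the argument for the ``true'' three--wave resonances is not how \cite{BMN2} (or this paper) does it, and as written it would not work.

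The weak spot is the sentence asserting that for triads with $k_3m_3n_3\neq 0$ ``the algebraic identity $\omega=0$ forces a cancellation in the symbol of $B_{\mathrm{res}}$ that yields an energy-type a priori estimate.''  No such cancellation exists.  The genuine cancellations (Lemma~\ref{lem:BR} and Lemma~\ref{lem:BR2} in this paper) are (a) $\overline{B_R(a_{\mathrm{osc}},a_{\mathrm{osc}})}=0$, which is what decouples the 2D--averaged system from the oscillating part and which your sketch does not mention, and (b) the vanishing of $\LR{B_R(\bbar a,b_{\mathrm{osc}}),b_{\mathrm{osc}}}_{H^s}$, $\LR{B_R(b_{\mathrm{osc}},\bbar b),b_{\mathrm{osc}}}_{H^s}$ and $\LR{B_R(a_{\mathrm{osc}},b_{\mathrm{osc}}),b_{\mathrm{osc}}}_{L^2}$ --- note the last one holds only in $L^2$, not in $H^s$ for $s>0$.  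The self-interaction $\LR{B_R(U_{\mathrm{osc}},U_{\mathrm{osc}}),U_{\mathrm{osc}}}_{H^1}$ is generically nonzero and is the whole difficulty.  What \cite{BMN2} actually proves is a \emph{counting} estimate: for each fixed $n$ the number of resonant $(k,m)$ with $|k|\le L$ is $O(L^2)$ (``$2$D-like''), because the resonance relation $\om^\sgm_{nk(n-k)}=0$ is a nontrivial polynomial equation in $k_3$ of bounded degree for fixed $(n,k_1,k_2)$.  This feeds into a trilinear Sobolev-type bound of the form $\big|\LR{B_R(a_{\mathrm{osc}},a_{\mathrm{osc}}),a_{\mathrm{osc}}}_{H^1}\big|\lec \tnorm{a_{\mathrm{osc}}}{H^1}^2\tnorm{a_{\mathrm{osc}}}{H^{2}}$ (cf.\ Lemma~\ref{lem:RC} with $\rho=2$), and then the $H^1$ Gronwall argument, combined with the $L^2$ decay from (b), yields the global a priori bound --- which is exponential in $\nu^{-1}$ (item~(iv) of Section~\ref{subsec:BMN}), not the clean energy inequality your phrasing suggests.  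This is also why the period-dependence of $\Om_0$ is as recorded there, and why the present paper has to replace that $O(L^2)$ count by the divisor-bound estimate $O(L^{1+\e})$ of Lemma~\ref{lem:c2} to handle fractional dissipation.

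Apart from that, the rest of the scheme --- Fujita--Kato local theory for $s>1/2$, diagonalization of $\HP J\HP$ with eigenvalues $\pm in_3/|n|$, the interaction picture $v=\cL(-\Om t)u$, the resonant/non-resonant splitting, integration by parts in $t$ on a frequency-truncated non-resonant part with a quantitative lower bound on nonzero phases, parabolic smoothing for the high-frequency tail, and a comparison/bootstrap between $v$ and the resonant flow --- matches the \cite{BMN1,BMN2,CDGG} argument that this paper also follows (Sections~2, 5, 6).  So the gap is localized but essential: replace the claimed ``cancellation'' by the $2$D-like count of the resonant set and the resulting Gronwall estimate.
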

As shown in \cite{BMN1,BMN2}, it turns out that the estimates on the obtained global solutions depend crucially on the period of the torus.
For instance, the global a priori bound is independent of the viscosity coefficient $\nu >0$ for generic periods (\cite{BMN1}), whereas exponential-in-$\nu ^{-1}$ dependence may occur in the ``worst case'' (\cite{BMN2}).
In this paper, we will focus on the special torus $\mathbb{T}^3=[0,2\pi )^3$, which is among the ``worst case'' as we will see in Section~\ref{subsec:BMN} below.
We remark that the above result was extended to the critical case $s=1/2$ in \cite[Theorem 6.2]{CDGG}; see also \cite[Theorem~5.7]{CDGG} for an analogous result on $\mathbb{R}^3$.

We next recall previous results in the inviscid case.
In the spatially decaying setting, combining the Strichartz estimates with Beale-Kato-Majda's blow-up criterion, Koh, Lee and Takada \cite{KLT} showed long time existence of solutions 
to the Euler equations provided by large Coriolis parameter. 
The periodic case may be more difficult due to the appearance of the resonant equation.
In \cite{BMN1}, Babin \mbox{et al.} initially considered long time solvability of the rotating Euler equations (see also \cite{MNBG} in a cylinder case). However they set  specific periodic domains (specific aspect ratios) and  eliminate ``nontrivial resonant part''
which is essentially related to the Rossby wave in physics (see \cite{KY, YY} for example).
For domains with other periods we need to deal with ``nontrivial resonant part'', and it has been an open problem.
On the other hand, in a cylinder case, Golse, Mahalov and Nicolaenko \cite{GoMN} considered bursting dynamics of the inviscid resonant equation. Thus we may not expect to show existence of inviscid  smooth global flow in general periodic cases.
Nevertheless, by a refined estimate on ``nontrivial resonant part'' based on elementary number theory (Lemma \ref{lem:c2} below),
we can  progress a less viscosity effect case (fractional Laplacian case) in the periodic domain $\mathbb{T}^3=[0,2\pi )^3$.
A fractional Laplacian has been employed in many theoretical and numerical works instead of the usual viscosity; see, for example, \cite{CS} and \cite{XWCE}.

First we state the following local existence theorem, which is obtained by a standard argument.
For $\al \in (\frac{3}{4},1]$, $C(\al )$ denotes any positive constant depending on $\al$ with $C(\al )\to \I$ as $\al \downarrow \frac{3}{4}$.
\begin{thm}\label{prop:lwp}
Let $\nu >0$, $\al \in (\frac{3}{4},1]$, $s\ge 1$.
Then, \eqref{NSC} is locally well-posed in $H^s(\T ^3)$:
For any $u_0\in H^s$ with $\mathrm{div}\,u_0=0$, there exists a unique solution $u\in C([0,T_L];H^s)\cap C((0,T_L];H^\I )$ of \eqref{NSC} on the time interval $[0,T_L]$ such that
\begin{gather}
\label{est:let} \nu T_L=\big( C(\al )\nu ^{-1}\tnorm{u_0}{H^1}\big) ^{-\frac{4\al}{4\al -3}},\\
\label{est:ls} \sup _{0<t\le T_L}\big( \tnorm{u(t)}{H^1}+(\nu t)^{\frac{1}{2}}\tnorm{u(t)}{H^{1+\al}}\big) \le C\tnorm{u_0}{H^1},\\
\label{est:1al} \nu \int _0^{T_L}\tnorm{u(t)}{H^{1+\al}}^2\,dt\le C(\al )\tnorm{u_0}{H^1}^2.
\end{gather}
Moreover, there exists $\eta =\eta (s)>0$ such that if $\tnorm{u_0}{H^1}\le \eta (1)\nu$, then the solution $u$ is global.
If $\tnorm{u_0}{H^1}\le \eta (s)\nu$, then it holds that
\eqq{\tnorm{u(t)}{H^s}\le e^{-\frac{1}{2}\nu t}\tnorm{u_0}{H^s},\qquad t\ge 0.}
\end{thm}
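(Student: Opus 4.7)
The plan is to carry out a standard energy-method argument, with the Coriolis term playing a cosmetic role only. First I would apply the Leray projector $P$ to \eqref{NSC}: the Coriolis contribution $\Om PJu$ becomes a bounded, skew-symmetric operator on every $H^s$ that commutes with $(-\De )^\beta$ on divergence-free vector fields, so it drops out of every $L^2$-based energy identity and the analysis reduces to that of the fractionally dissipative Navier--Stokes equation $\p _tu+\nu (-\De )^\al u=-P[(u\cdot \nabla )u]$. The exponent $\frac{4\al}{4\al -3}$ in \eqref{est:let} and the blow-up of $C(\al )$ as $\al \downarrow \frac{3}{4}$ are the fingerprints of the $H^1$-level energy estimate at the critical scaling $\al =\frac{3}{4}$ in three dimensions.

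The key step will be a single trilinear estimate. Pairing the equation with $(-\De )u$ and integrating by parts, the divergence-free condition reduces the nonlinear contribution to $|\int _{\T ^3}\p _ku^i\,\p _iu^j\,\p _ku^j\,dx|\le \tnorm{\nabla u}{L^3}^3$. I would then combine the Sobolev embedding $H^{1/2}(\T ^3)\hookrightarrow L^3$ with the interpolation
\eqq{\tnorm{u}{H^{3/2}}\le \tnorm{u}{H^1}^{(2\al -1)/(2\al )}\tnorm{u}{H^{1+\al}}^{1/(2\al )},}
valid because $\al >\tfrac{1}{2}$, and apply Young's inequality with conjugate exponents $(4\al /3,\,4\al /(4\al -3))$ to absorb the top-regularity factor into the dissipation, yielding
\eq{eq:target}{\frac{d}{dt}\tnorm{u}{H^1}^2+\nu \tnorm{u}{H^{1+\al}}^2\le C(\al )\nu ^{-\frac{3}{4\al -3}}\tnorm{u}{H^1}^{\frac{12\al -6}{4\al -3}}.}
The restriction $\al >\frac{3}{4}$ enters exactly because the Young exponent $4\al /(4\al -3)$ must be finite, and this is also the source of every $C(\al )$ blow-up in the statement.

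From \eqref{eq:target}, the ODE comparison $y'\le C(\al )\nu ^{-3/(4\al -3)}y^{(6\al -3)/(4\al -3)}$ with $y(t)=\tnorm{u(t)}{H^1}^2$ delivers the lifespan \eqref{est:let} and the uniform bound $\tnorm{u(t)}{H^1}\lec \tnorm{u_0}{H^1}$; integrating \eqref{eq:target} in time gives \eqref{est:1al}; and the weighted smoothing $(\nu t)^{1/2}\tnorm{u(t)}{H^{1+\al}}\lec \tnorm{u_0}{H^1}$ in \eqref{est:ls} follows from the Duhamel formula together with the parabolic bound $\tnorm{e^{-\nu t(-\De )^\al}f}{H^{1+\al}}\lec (\nu t)^{-1/2}\tnorm{f}{H^1}$, after which the instantaneous gain into $H^\I$ is a routine bootstrap. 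For the constructive part I would set up a mollification or Galerkin scheme, propagate \eqref{est:ls}--\eqref{est:1al} uniformly in the approximation parameter, extract a weak limit, and prove uniqueness in $C([0,T_L];H^1)$ by a Gronwall estimate on the $L^2$-norm of the difference of two solutions.

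The small-data global claim will then be immediate: when $\tnorm{u_0}{H^1}\le \eta (1)\nu$, the same trilinear estimate combined with the Poincar\'e inequality $\tnorm{u}{H^{1+\al}}\gec \tnorm{u}{H^1}$ on zero-mean periodic vector fields dominates the nonlinearity by $\tfrac{\nu}{2}\tnorm{u}{H^{1+\al}}^2$, leaving $\frac{d}{dt}\tnorm{u}{H^1}^2+\nu \tnorm{u}{H^1}^2\le 0$ and hence the exponential decay; the smallness assumption then propagates for all time and allows the $H^s$-variant to be closed by the analogous energy estimate with $(-\De )^{s/2}$ in place of $\nabla$, using a Kato--Ponce fractional Leibniz to re-derive \eqref{eq:target} at level $H^s$. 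The main obstacle in the whole argument is really bookkeeping: getting the interpolation exponents in the trilinear estimate to match the precise powers in \eqref{est:let}. Once that is fixed, everything else is textbook parabolic theory.
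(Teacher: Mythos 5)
Your proposal is correct in substance, and all the exponents check out (the interpolation $\tnorm{u}{H^{3/2}}\le \tnorm{u}{H^1}^{(2\al -1)/(2\al )}\tnorm{u}{H^{1+\al}}^{1/(2\al )}$, the Young exponents $(4\al /3,\,4\al /(4\al -3))$, the resulting differential inequality, and the lifespan and dissipation bounds \eqref{est:let}, \eqref{est:1al} all match), but the route is genuinely different from the paper's. You run a quasilinear energy/Galerkin scheme: the skew-symmetric Coriolis term drops out of every $H^s$ energy identity, the trilinear term is controlled by $\tnorm{\nabla u}{L^3}^3$ via $H^{1/2}\hookrightarrow L^3$, and existence, the lifespan, and \eqref{est:1al} come from an ODE comparison, with uniqueness by an $L^2$ Gronwall argument (which indeed closes for $\al >\tfrac34$ after absorbing $\tnorm{w}{H^{3/4}}^2$ into the dissipation). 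The paper instead builds mild solutions: it incorporates the rotation into the semigroup $S(t)=e^{-\nu t(-\Delta )^\al}\cL (\Om t)$, proves the smoothing estimate of Lemma~\ref{lem:smoothing}, and contracts in the time-weighted space $X^s_T$ with norm $\sup _{0<t\le T}(\tnorm{u(t)}{H^s}+(\nu t)^{1/2}\tnorm{u(t)}{H^{s+\al}})$, using the product estimate of Lemma~\ref{lem:Sobolev}; the energy inequality \eqref{ineq:Hsenergy} is then used only for \eqref{est:1al} and the small-data decay. What the paper's route buys is that uniqueness in $C([0,T_L];H^1)$, the weighted smoothing bound \eqref{est:ls}, the instantaneous $H^\I$ regularity, and the fact that $T_L$ depends only on $\tnorm{u_0}{H^1}$ even for $u_0\in H^s$ all fall out of the contraction itself; what your route buys is a more self-contained derivation of the a priori bounds and the small-data decay. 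Be aware, though, that in your scheme the step ``\eqref{est:ls} follows from Duhamel plus the parabolic bound, and $H^\I$ is a routine bootstrap'' is not purely linear: to close the weighted $H^{1+\al}$ bound you must estimate the nonlinear Duhamel term, e.g. via $\tnorm{(u\cdot \nabla )v}{H^{\al -\frac12}}\lec \tnorm{u}{H^1}\tnorm{v}{H^{1+\al}}$ together with the integrability of $(\nu (t-t'))^{-3/(4\al )}$ (again $\al >\tfrac34$), which is essentially the paper's fixed-point estimate re-entering through the back door; similarly the propagation of $H^s$ regularity with $H^1$-only lifespan needs the $H^s$ bilinear estimate with the derivative split between factors. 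These are bookkeeping items, as you say, but they are exactly the content of the paper's $X^s_T$ argument rather than consequences of the energy estimate alone.
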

We give its proof in the next section.
Since we consider the subcritical problem with respect to the scaling (see Section~\ref{Fourier-Lebesgue}), time of local existence is bounded from below in terms of the size of initial data.
Note also that the result is uniform in the Coriolis parameter $\Om$.

We now state the main theorem.
\begin{thm}\label{main}
Let $\nu >0$ and $\al \in (\frac{3}{4},1]$.
For any $E>0$, there exists $\Om _0$ depending on $\al$, $\nu$ and $E$ such that for any $\Om \in \R$ with $|\Om |\ge \Om _0$ and any real-valued divergence-free mean-zero initial vector field $u_0\in H^1(\mathbb{T}^3)$ with $\tnorm{u_0}{H^1}\le E$, there exists a unique global smooth solution $u(t)\in C([0,\infty );H^1)\cap C((0,\infty );H^\I )$ of \eqref{NSC} satisfying
\eq{apri-u}{\tnorm{u(t)}{H^1}^2+\nu \int _0^t\tnorm{u(t')}{H^{1+\al}}^2\,dt'\le C(\al )E^2\Big( 1+\frac{E}{\nu}\Big) ^{C(\al )},\qquad t>0.}
Moreover, $\Om _0$ can be taken as
\eq{def:Om0}{\Om _0
=
E\exp \Big[ C(\al )\Big( \frac{E}{\nu}\Big) ^{C(\al )}\Big] .}
\end{thm}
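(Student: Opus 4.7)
My plan is to follow the Babin--Mahalov--Nicolaenko strategy of splitting the nonlinearity into resonant and non-resonant parts, with the crucial new ingredient being a refined estimate on the resonant trilinear form based on the elementary number-theoretic Lemma \ref{lem:c2}. By Theorem \ref{prop:lwp} it suffices to produce an a priori bound of the form \eqref{apri-u}; global existence and the blowup-rate for $\Om_0$ then follow by a standard continuation argument.

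I first diagonalise the linear rotating part. The restriction of $JP$ (with $P$ the Leray projector) to divergence-free fields has eigenvalues $\pm i n_3/|n|$ with helical eigenvectors $h_\pm(n)$, so expanding $\hhat u(n,t) = \sum_{\sigma = \pm} a_\sigma(n,t) h_\sigma(n)$ and passing to the phase-adjusted profile $b_\sigma(n,t) = e^{-i\sigma \Om t n_3/|n|} a_\sigma(n,t)$ converts \eqref{NSC} into a fractional-heat-type system for $b_\sigma$ whose trilinear convolution carries the phase factor $e^{i\Om t \omega}$, with
\eqq{
\omega = \omega(n_1,n_2,n;\sigma_1,\sigma_2,\sigma) = \sigma_1\frac{(n_1)_3}{|n_1|} + \sigma_2\frac{(n_2)_3}{|n_2|} - \sigma\frac{n_3}{|n|},\qquad n_1+n_2=n.
}
Decompose the nonlinearity accordingly into $\Sc{N}_{\mathrm{res}}$ ($\omega = 0$) and $\Sc{N}_{\mathrm{nr}}$ ($\omega \ne 0$). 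For the non-resonant part, the Poincar\'e normal-form transformation (integration by parts in $t$ against $e^{i\Om s\omega}$) trades one power of $\Om^{-1}$ for one extra nonlinear factor plus boundary terms in $t$; using the $H^1$-bound supplied by Theorem \ref{prop:lwp}, its contribution to the energy identity is $O(\Om^{-1})$ times a polynomial in $\tnorm{u}{H^1}$, which is absorbed once $|\Om| \ge \Om_0$ is chosen as in \eqref{def:Om0}.

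The main work goes into the resonant contribution, for which I aim to prove an estimate of the shape
\eqq{
\big|\LR{\Sc{N}_{\mathrm{res}}(u,u),\, (-\De)u}\big| \lec _\al \tnorm{u}{H^1}^{\theta} \tnorm{u}{H^{1+\al}}^{2-\theta '}\qquad (\theta > 0,\ \theta ' < 2),
}
whose high-frequency factor can be absorbed by the dissipation $\nu \tnorm{u}{H^{1+\al}}^2$ via Young's inequality. By Plancherel this reduces to counting, for each fixed output frequency $n \in \Bo{Z}^3$, the number of admissible triples $(n_1,n_2)$ with $n_1+n_2 = n$, $|n_j| \sim N_j$ and $\omega = 0$. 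Clearing denominators turns $\omega = 0$ into a polynomial Diophantine equation in integer vectors; precisely because $\T^3 = [0,2\pi)^3$ is the cube (all periods equal), the resulting expression factors in $\Bo{Z}$ and Lemma \ref{lem:c2}, a divisor-type bound of the form $d(k) \lec _\e |k|^\e$, replaces the trivial count $\lec N^2$ by one with only $\e$-small powers of $N$ and $|n|$. The derivative deficit this must close is $2(1-\al)$, which the gain beats exactly when $\al > 3/4$.

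Combining the two bounds produces a differential inequality
\eqq{
\frac{d}{dt}\tnorm{u}{H^1}^2 + \frac{\nu}{2}\tnorm{u}{H^{1+\al}}^2 \lec _\al \nu^{-p}\tnorm{u}{H^1}^{q} + \Om^{-1}\Phi(\tnorm{u}{H^1})
}
for some $p,q > 0$ depending on $\al$ and some polynomial $\Phi$. A Gr\"onwall-type bootstrap, combined with Theorem \ref{prop:lwp}, then yields \eqref{apri-u} globally in time as soon as $\Om_0$ is taken exponentially large in $E/\nu$ in the manner of \eqref{def:Om0}. The main obstacle is thus the Diophantine counting used in the resonant estimate: the argument is specific to the cubic torus (where the resonance surface admits the divisor-bound factorisation) and to $\al > 3/4$ (where the $\e$-gain outruns the $2(1-\al)$ derivative loss), and threading Lemma \ref{lem:c2} through the nonlinear estimate without losing more derivatives than the fractional dissipation can absorb is the single technically delicate point of the proof.
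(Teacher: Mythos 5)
Your overall frame (diagonalisation by helical waves, resonant/non-resonant splitting, normal form in time for the non-resonant part, number-theoretic counting for the resonances) is the same as the paper's, but the way you propose to close the resonant estimate has a genuine gap. You want a size bound $\big|\LR{\Sc{N}_{\mathrm{res}}(u,u),-\Delta u}\big|\lec \tnorm{u}{H^1}^{\theta}\tnorm{u}{H^{1+\al}}^{2-\theta'}$ absorbed by the dissipation, giving $\frac{d}{dt}\tnorm{u}{H^1}^2+\frac{\nu}{2}\tnorm{u}{H^{1+\al}}^2\lec \nu^{-p}\tnorm{u}{H^1}^{q}+\Om^{-1}\Phi(\tnorm{u}{H^1})$ and then Gr\"onwall. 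This cannot produce \eqref{apri-u} for large data: the right-hand side is superlinear in $\tnorm{u}{H^1}^2$ with no small or time-integrable companion factor, so the comparison ODE blows up in finite time unless $E\lec\nu$. Moreover, the improved count of Lemma~\ref{lem:c2} applies only to the \emph{non-trivial} resonances ($k_3m_3n_3\neq 0$); the resonant set also contains all purely two-dimensional interactions ($n_3=k_3=m_3=0$), which form a genuinely $2$D set and carry no smallness at all — they are exactly the 2D Navier--Stokes nonlinearity. The paper never estimates these by size: it uses the structural cancellations $\bbar{B_R(a\osc ,a\osc )}=0$ (Lemma~\ref{lem:BR}) and the vanishing inner products of Lemma~\ref{lem:BR2} to decouple the limit equation into the 2D horizontal part (bounded globally only through the vorticity formulation), the 2D vertical part, and the oscillating part; and even for the oscillating part the global closure works because, after Lemma~\ref{lem:combi} and Young, the Gr\"onwall factor is a bounded $L^2$ quantity times $\tnorm{U\osc}{H^\al}^2$, whose time integral is finite by the $L^2$ energy inequality — which itself rests on Lemma~\ref{lem:BR2}. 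Without this decomposition and these cancellations your single energy estimate does not close globally. Incidentally, Lemma~\ref{lem:c2} gives a count $\lec L^{1+\e}$ (``$(1+\e)$D''), not ``only $\e$-small powers''; with the count you state, the threshold would be $\al>\frac12$ rather than $\frac34$, which shows the absorption numerology in your sketch has not actually been carried out.

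Two further points. First, ``integration by parts in $t$ trades one power of $\Om^{-1}$'' is not uniform on the torus: the non-resonant phases $\om^\sgm_{nk(n-k)}$ accumulate at $0$ as the frequencies grow, so one must truncate to $|k|,|n-k|\le N$, prove the small-divisor bound $|\om^\sgm_{nk(n-k)}|\gec N^{-12}$ on that range, control the high-frequency tail by the dissipation (again using $\al>\frac34$), and finally optimise in $N$; this is the content of Lemma~\ref{lem:est-Om} and the source of the exponential form \eqref{def:Om0}, and it also requires bounds on $\p_t v$, $\p_t w$ from the equations. Second, the paper does not run one bootstrap on $u$ itself: it first proves global solvability of the limit equation \eqref{limit} with the polynomial bound (Proposition~\ref{prop:gwp-FL}), and then estimates the difference $w=v-U$ between the filtered solution and the limit solution, extending $v$ step by step over intervals of length $\ti{T}_L$ by induction, with the non-resonant forcing controlled in time-integrated form against $w$. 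Your plan could be reorganised along these lines, but as written the treatment of the resonant part is the step that fails.
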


\begin{rem}
For $a=(a_1,a_2,a_3)\in (0,\I )^3$, consider the torus 
\[ \mathbf{T}^3_a:=[0,2\pi a_1)\times [0,2\pi a_2)\times [0,2\pi a_3).\]
We say $\mathbf{T}^3_a$ \emph{regular} if $a_1=a_2=a_3$ and \emph{rational} if $a_2^2/a_1^2,a_3^2/a_1^2\in \Bo{Q}$.
Although we will mainly focus on the case of $\T ^3=[0,2\pi )^3$, our result can be extended to the case of any \emph{rational} periodic domains with a slight modification. See Remark~\ref{rem:rational} below.

We also remark that for periodic domains $\mathbf{T}^3_a$ with $a_2/a_1,a_3/a_1\in \mathbb{Q}$, global regularity under fast rotation follows immediately from the above theorem and a scaling argument.
In fact, by a suitable scaling transformation, any solution $(u,p)$ to \eqref{NSC} on $\mathbf{T}^3_a$ with $a_2/a_1,a_3/a_1\in \mathbb{Q}$ can be transformed into a solution on $\T ^3$ with rescaled initial data and rotation speed.
\end{rem}

\begin{rem}
We will obtain a global $H^1$ bound on the solution to \eqref{NSC} which is polynomial in $\nu ^{-1}$ rather than exponential. 
Even for the usual Navier-Stokes equations ($\alpha =1$) this improves the previous result of Babin \mbox{et al.} \cite{BMN2} in the case of rational periodic domains.
See Section~\ref{subsec:BMN} for related discussion.
\end{rem}

\begin{rem}\label{rem:Fourier-Lebesgue}
Let us define the function space $\mathcal F^{-1}\ell_1$ as follows:
\begin{equation*}
\mathcal{F}^{-1}\ell_1(\mathbb{T}^d):=\bigg\{ u=\sum_{n\in\mathbb{Z}^d}\hat u(n)e^{in\cdot x}\,\bigg|\, \sum_{n\in\mathbb{Z}^d}|\hat u(n)|<\infty \bigg\}. 
\end{equation*}
It is well known that $\mathcal F^{-1}\ell_1$ is continuously embedded in the space of continuous functions (in the nonperiodic case it is embedded in $BUC$, the space of bounded uniformly continuous functions).
In the spatially almost periodic case, $\mathcal F^{-1}\ell_1$ framework seems to be one of the most suitable (see \cite{GIMM2, GIMS, GIMS2, GJMY, GMN, GMY, Y} for example).
On the other hand, to control the nontrivial resonant part,  the energy method is one of the most powerful tools.
Note that, up to now, we can use the energy method only in the periodic case,
thus, controlling the nontrivial resonant part in the spatially almost periodic case is open.
See also Section \ref{Fourier-Lebesgue}.
\end{rem}

In the rest of this section, we outline the proof of the main theorem (Theorem \ref{main}). 
We basically follow the previous argument in \cite{BMN1,BMN2,BMN4,CDGG}.

The Poincar\'e propagator $\cL (\Om t)=e^{-\Om t\HP J\HP}$ is defined as the unitary group associated with the linear problem
\eqq{\p _t\Phi +\Om \HP (e_3\times \Phi )=0,\qquad \Phi \big| _{t=0}=\Phi _0\quad \text{with\quad $\mathrm{div}\,\Phi _0=0$},}
where
$\HP$ denotes the Helmholtz-Leray projection onto divergence-free vector fields; $\HP$ acts as multiplication by the matrix $\hhat{\HP}(n)$ in the Fourier space:
\eqq{\hhat{\HP}(n)=\mathrm{Id}-\big( \frac{n_in_j}{|n|^2}\big) _{1\le i,j\le 3}=\frac{1}{|n|^2}\left( \begin{matrix}
n_2^2+n_3^2 & -n_1n_2 & -n_1n_3\\
-n_2n_1 & n_1^2+n_3^2 & -n_2n_3\\
-n_3n_1 & -n_3n_2 & n_1^2+n_2^2\end{matrix}\right) .}
We see that the matrix
\eqq{\hhat{\HP}(n)J\hhat{\HP}(n)=\frac{n_3}{|n|^2}\left( \begin{matrix}0&-n_3&n_2\\ n_3&0&-n_1\\ -n_2&n_1&0\end{matrix}\right)}
has eigenvalues $\pm i\frac{n_3}{|n|},0$, and for each $n\in \Bo{Z}^3\setminus \{ 0\}$, the vectors $e^{\pm}(n)\in \Bo{C}^3$ defined by
\eqq{e^\pm (n)=\begin{cases}
\dfrac{1}{\sqrt{2}|n||n^h|}\big( n_1n_3\pm in_2|n|,\, n_2n_3\mp in_1|n|,\, -|n^h|^2\big) \quad &\text{if}~~n^h:=(n_1,n_2) \neq 0,\\
\dfrac{1}{\sqrt{2}}\big( 1,\, \mp i\,\mathrm{sgn}(n_3),\, 0\big) &\text{if}~~n^h=0\end{cases}
}
are eigenvectors corresponding to $\pm i\frac{n_3}{|n|}$ and form an orthonormal basis of 
\eqq{\Shugo{\hhat{a}\in \Bo{C}^3}{n\cdot \hhat{a}=0}=\mathrm{Ran}\,\hhat{\HP}(n).}
Therefore, the Poincar\'e propagator $\cL (\Om t)$ acts on a divergence-free and mean-free vector field $a(x)=\sum_{n\neq 0}\hhat{a}(n)e^{in\cdot x}$ as
\eqq{\big[\cL (\Om t)a\big] (x)=\sum _{n\neq 0}\sum _{\sgm \in \{ \pm \}}e^{-\sgm i\Om t\frac{n_3}{|n|}}\hhat{a}^\sgm (n)e^{in\cdot x}, \quad \hhat{a}^\sgm (n):=(\hhat{a}(n)|e^\sgm (n))e^\sgm (n),}
where $(\cdot |\cdot )$ denotes the inner product of $\Bo{C}^3$.
(Note that $a\cdot b=\sum _ja_jb_j$ whereas $(a|b)=\sum _ja_jb_j^*$, where $^*$ stands for the complex conjugate.)

\begin{rem}
It is also easy to see that
\eq{epmformula}{n\times e^\pm (n)=\pm i|n|e^\pm (n),\qquad n\in \Bo{Z}^3\setminus \{ 0\} .}
Using this, we obtain another representation of $\cL (\Om t)$ which appears in \cite[(2.10)]{BMN1}:
\eqq{\big[\cL (\Om t)a\big] (x)=\sum _{n\in \Bo{Z}^3\setminus \{ 0\}}\Big[ \cos \Big( \frac{\Om tn_3}{|n|}\Big) \hhat{a}(n)-\sin \Big( \frac{\Om tn_3}{|n|}\Big) \frac{n}{|n|}\times \hhat{a}(n)\Big] e^{in\cdot x}.}
In fact, for divergence-free and mean-zero $a$, 
\eqq{&\cos \Big( \Om t\frac{n_3}{|n|}\Big) \hhat{a}(n)-\sin \Big( \Om t\frac{n_3}{|n|}\Big) \frac{n}{|n|}\times \hhat{a}(n)\\
&=\sum _{\sgm \in \{ \pm \}}\frac{1}{2}e^{-\sgm i\Om t\frac{n_3}{|n|}}\Big( \hhat{a}(n)-\sgm i\frac{n}{|n|}\times \hhat{a}(n) \Big) \\
&=\sum _{\sgm \in \{ \pm \}}\frac{1}{2}e^{-\sgm i\Om t\frac{n_3}{|n|}}\sum _{\sgm '\in \{ \pm \}}\big( \hhat{a}(n) \big| e^{\sgm '}(n)\big) \Big( e^{\sgm '}(n)-\sgm i\frac{n}{|n|}\times e^{\sgm '}(n)\Big) \\
&=\sum _{\sgm \in \{ \pm \}}e^{-\sgm i\Om t\frac{n_3}{|n|}}\sum _{\sgm '\in \{ \pm \}}\big( \hhat{a}(n) \big| e^{\sgm '}(n)\big) \frac{e^{\sgm '}(n)+\sgm \sgm 'e^{\sgm '}(n)}{2}\\
&=\sum _{\sgm \in \{ \pm \}}e^{-\sgm i\Om t\frac{n_3}{|n|}}\hhat{a}^{\sgm}(n).}
\end{rem}

Next, we set $v(t):=\cL (-\Om t)u(t)$.
If $u(t)$ solves \eqref{NSC}, then $v$ (formally) solves
\IVP{FNS2}{&\p _tv+\nu (-\Delta )^\al v+B(\Om t;v(t),v(t))=0,\qquad t>0,\quad x\in \T ^3,\\
&v\big| _{t=0}=u_0\quad \text{with\quad $\mathrm{div}\,u_0=0$,}}
where
\eqq{B(\Om t;a,b):=\cL (-\Om t)\HP \big( \cL (\Om t)a\cdot \nabla \big) \cL (\Om t)b,}
so that
\eqq{&\big[ \F B(\Om t;a,b)\big] (n)\\
&=i\sum _{\mat{\sgm =(\sgm _1,\sgm _2,\sgm _3)\\ \in \{ \pm \} ^3}}\sum _{\mat{k,m\neq 0\\ n=k+m}}e^{-i\Om t\om ^\sgm _{nkm}}(\hhat{a}^{\sgm _1}(k)\cdot m)\big( \hhat{b}^{\sgm _2}(m)\big| e^{\sgm _3}(n)\big) e^{\sgm _3}(n),}
\[ \om ^\sgm _{nkm}:=\sgm _1\frac{k_3}{|k|}+\sgm _2\frac{m_3}{|m|}-\sgm _3\frac{n_3}{|n|}.\]
Now we decompose $B(\Om t;a,b)$ into the resonant and the non-resonant parts as

\begin{equation}\label{decomposition}
B(\Om t;a,b)=B_{R}(a,b)+B_{N\!R}(\Om t;a,b),
\end{equation}
where 
\begin{eqnarray*}
\big[ \F B_{R}(a,b)\big] (n)&:=&i\sum _{\sgm \in \{ \pm \} ^3}\sum _{\mat{n=k+m\\ \om ^\sgm _{nkm}=0}}(\hhat{a}^{\sgm _1}(k)\cdot m)\big( \hhat{b}^{\sgm _2}(m)\big| e^{\sgm _3}(n)\big) e^{\sgm _3}(n),
\end{eqnarray*}
so that
\begin{align*}
&\big[ \F B_{N\!R}(\Om t;a,b)\big] (n)\\
&=i\sum _{\sgm \in \{ \pm \} ^3}\sum _{\mat{n=k+m\\ \om ^\sgm _{nkm}\neq 0}}e^{-i\Om t\om ^\sgm _{nkm}}(\hhat{a}^{\sgm _1}(k)\cdot m)\big( \hhat{b}^{\sgm _2}(m)\big| e^{\sgm _3}(n)\big) e^{\sgm _3}(n).
\end{align*}
It is expected (and actually proved in Section~\ref{sec:error}) that only the resonant part contributes in the limit $|\Om |\to \I$.
Therefore, we need to consider the following limit equation (\emph{resonant equation}):
\IVP{limit}{
&\p _tU+\nu (-\Delta )^\al U+B_R(U(t),U(t))=0,\qquad t>0,\quad x\in \T ^3,\\
&U\big| _{t=0}=u_0\quad \text{with $\mathrm{div}\,u_0=0$.}
}

We remark that similar local existence results to Theorem \ref{prop:lwp} for the equation \eqref{FNS2}
and for the limit equation \eqref{limit} can be obtained with the identical proof.

The main task is to show existence of global
regular solutions to the resonant equation \eqref{limit}.
More precisely
we will show the following: 
\begin{prop}\label{prop:gwp-FL}
Let $\nu >0$, $\al \in (\frac{3}{4},1]$ and $u_0\in H^1(\T ^3)$ be any real-valued, divergence-free, mean-zero initial vector field.
Then, there exists a unique global solution $U\in C([0,\I );H^1)\cap L^2((0,\I );H^{1+\al})\cap C((0,\I );H^\I )$ to \eqref{limit} satisfying the same estimate as \eqref{apri-u}.
In particular, for any $s\ge 1$, the $H^s$ norm of $U(t)$ decays exponentially for large time.
\end{prop}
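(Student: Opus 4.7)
My plan is to deduce the proposition from a \emph{global} $H^1$ a priori bound for smooth solutions of \eqref{limit} of the form \eqref{apri-u}. The analogue of Theorem~\ref{prop:lwp} for \eqref{limit} noted just after its statement gives local existence, uniqueness, instantaneous smoothing, and an $H^1$ blow-up criterion, so once the $H^1$ bound is established, all the global existence, regularity, and $L^2_tH^{1+\al}$ integrability assertions of the proposition follow by the standard continuation argument. For the exponential $H^s$-decay statement, the $L^2$ energy identity below together with the Poincar\'e inequality for mean-zero fields on $\T^3$ yields $\tnorm{U(t)}{L^2}\le e^{-\nu t}\tnorm{u_0}{L^2}$; interpolating this decay against the $L^2_tH^{1+\al}$ integrability produces a time $t_*$ with $\tnorm{U(t_*)}{H^1}\le \eta(s)\nu$, from which the small-data conclusion of Theorem~\ref{prop:lwp} applies and propagates exponentially forward.

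The $H^1$ bound is obtained from the two energy identities
\eqq{\tnorm{U(t)}{L^2}^2+2\nu \int_0^t\tnorm{U(t')}{\dot H^\al}^2\,dt'
=\tnorm{u_0}{L^2}^2,\\
\tfrac{1}{2}\tfrac{d}{dt}\tnorm{U}{\dot H^1}^2+\nu \tnorm{U}{\dot H^{1+\al}}^2
=-\langle B_R(U,U),-\De U\rangle _{L^2},}
the first being immediate because the resonant subsum inherits the divergence-free cancellation $\langle B(\Om t;U,U),U\rangle _{L^2}=0$ (different resonance sectors do not mix in the $L^2$ pairing). To control the trilinear form on the right of the second identity, I would split $B_R$ according to the vertical-frequency profile of a resonant triple $n=k+m$ with $\om^\sgm_{nkm}=0$: since $n_3=k_3+m_3$, only three cases can occur, namely (a) $k_3=m_3=n_3=0$, a purely $2$D resonance; (b) exactly one of $k_3,m_3$ is zero, in which case the resonance relation forces $|k|=|n|$ or $|m|=|n|$, a ``catalytic'' interaction; and (c) $k_3,m_3,n_3$ are all nonzero, a genuine fast-wave resonance. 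Cases (a) and (b) I would handle by classical means -- the $2$D--$3$C structure and Ladyzhenskaya-type estimates for (a), and the modulus constraint combined with Cauchy--Schwarz for (b) -- yielding bounds of the form $\tfrac{\nu}{4}\tnorm{U}{\dot H^{1+\al}}^2+C(\al)\,\Phi(\tnorm{U}{H^1})\,\tnorm{U}{\dot H^\al}^2$ once the subcriticality $\al>\tfrac{3}{4}$ is invoked to absorb the top-derivative factor via Young's inequality.

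The main obstacle is case (c), and this is where Lemma~\ref{lem:c2} is indispensable: for each $n$ and sign pattern $\sgm$, it will bound the number of resonant pairs $(k,m)$ with all three vertical components nonzero by a divisor-type, almost-constant quantity of size $|n|^\e$, replacing the trivial $|n|^2$. Inserting this saving into a Cauchy--Schwarz decomposition of the case-(c) trilinear sum and distributing derivatives carefully between the three factors yields a bound of the same schematic form as above. Adding the contributions of the three cases in the $\dot H^1$ identity and using the time-integrability of $\tnorm{U}{\dot H^\al}^2$ furnished by the $L^2$ identity, a Gronwall argument closes the estimate and produces the polynomial-in-$\nu^{-1}$ bound \eqref{apri-u}. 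Without the divisor saving of Lemma~\ref{lem:c2}, case (c) would admit only trilinear bounds with strictly higher powers of $\tnorm{U}{H^1}$ in $\Phi$, forcing the exponential-in-$\nu^{-1}$ dependence previously encountered in \cite{BMN2}.
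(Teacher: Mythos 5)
Your overall skeleton (local theory plus continuation, the $L^2$ identity, Lemma~\ref{lem:c2} for the genuinely three-dimensional resonances, and smallness at large times feeding the small-data result to get exponential $H^s$ decay) is the same as the paper's, but the way you propose to close the $H^1$ estimate has a genuine gap. First, your enumeration by vertical frequencies omits the triples with $k_3,m_3\neq 0$ but $n_3=k_3+m_3=0$: these are not covered by your cases (a)--(c), they are not small, and in the paper they are removed only by the exact symmetrization identity $\bbar{B_R(a\osc ,a\osc )}=0$ (Lemma~\ref{lem:BR}). Second, and more seriously, a bound of the schematic form $\frac{\nu}{4}\tnorm{U}{\dot H^{1+\al}}^2+C(\al )\,\Phi (\tnorm{U}{H^1})\,\tnorm{U}{\dot H^{\al}}^2$ cannot be closed by Gronwall to the polynomial bound \eqref{apri-u}: with $\nu \int _0^\I \tnorm{U}{H^{\al}}^2\,dt\lec \tnorm{u_0}{L^2}^2$, a coefficient merely \emph{linear} in $\tnorm{U}{H^1}^2$ already produces an $\exp (C\nu ^{-2}\tnorm{u_0}{L^2}^2)$ factor -- exactly the worst-case bound of \cite{BMN2} that the proposition is meant to improve -- while the coefficients that actually arise are \emph{superlinear} in the unknown (e.g.\ $\tnorm{\bbar{U}^h}{H^1}^{2/(2\al -1)}$ from the transport of $\bbar{U}_3$, and a power $4/(4\al -3-2\e )$ if the resonant trilinear term is interpolated onto the $H^1$ norm), so for $\al <1$ the single-quantity ODE comparison need not even stay finite for large data, let alone give \eqref{apri-u}.

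The paper avoids this by using structure rather than estimates wherever the coefficient would otherwise involve the unknown $H^1$ norm. Since $\bbar{B_R(U,U)}=B_R(\bbar{U},\bbar{U})$, the limit equation decouples into the autonomous 2D system \eqref{FLbarh}--\eqref{FLbar3} and the oscillating equation \eqref{FLosc}; one first bounds $\bbar{U}^h$ uniformly in time via its vorticity, then $\bbar{U}_3$ with that bound appearing as a \emph{known} constant, and finally $U\osc$, where the cross terms $\LR{B_R(\bbar{U},U\osc ),U\osc}_{H^1}$ and $\LR{B_R(U\osc ,\bbar{U}),U\osc}_{H^1}$ vanish exactly (Lemma~\ref{lem:BR2}) and the remaining self-interaction, bounded by Lemma~\ref{lem:combi}, is interpolated down to the \emph{non-increasing} $L^2$ norm, so the differential inequality is simply integrated using $\nu \int \tnorm{U\osc}{H^{\al}}^2\,dt\le \tnorm{u_0}{L^2}^2$ -- no exponential Gronwall factor ever appears, which is what yields the polynomial-in-$\nu ^{-1}$ bound. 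To repair your argument you would need to import precisely these ingredients: the decoupling and sequential treatment of the 2D part, the cancellations of Lemmas~\ref{lem:BR}--\ref{lem:BR2} (or a substitute estimate for the omitted $n_3=0$ case), and the placement of the a priori controlled $L^2$ quantity, rather than $\tnorm{U}{H^1}$, in the coefficient multiplying $\tnorm{U}{H^{\al}}^2$.
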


To prove the above proposition, we make further decomposition of $B_R$ into the 2D part and the non-trivial resonance part, as in \cite{BMN1,BMN2,BMN4,CDGG}.
For a 3D-3C (three-dimensional three-component) vector field $a=(a_1,a_2,a_3):\T ^3\to \R ^3$, we define
\begin{itemize}
\item 2D-3C vector field $\bbar{a}$ by\hx $\bbar{a}(x^h):=\dfrac{1}{2\pi}\displaystyle\int _0^{2\pi}a(x)\,dx_3$,\\
or\hx $\bbar{a}(x^h)=\displaystyle\sum\limits _{n_3=0}\hhat{a}(n)e^{in\cdot x}$,
\item 3D-3C vector field $a\osc $ by\hx $a\osc (x):=a(x)-\bbar{a}(x^h)$,\\
or\hx $a\osc (x)=\displaystyle\sum\limits _{n_3\neq 0}\hhat{a}(n)e^{in\cdot x}$,
\item 3D-2C vector field $a^h$ by\hx $a^h(x):=\big( a_1(x),\,a_2(x)\big)$.
\end{itemize}

It is easily verified that for any divergence-free and mean-zero vector fields $a,b$, 
\eqs{\bbar{B_R(\bbar{a},b\osc )}=\bbar{B_R(a\osc ,\bbar{b})}=B_R(\bbar{a},\bbar{b})\osc =0,\\
\bbar{B_R(\bbar{a},\bbar{b})}=B_R(\bbar{a},\bbar{b})=\big( \HP _h (\bbar{a}^h\cdot \nabla ^h)\bbar{b}^h,\,(\bbar{a}^h\cdot \nabla ^h)\bbar{b}_3\big) ,}
where $\HP _h$ is the 2D Helmholtz projection, and $\nabla ^h =(\p _{x_1},\p _{x_2})$. 
Note that $\mathrm{div}_h\,\bbar{u_0}^h:=\nabla ^h\cdot \bbar{u_0}^h=0$ if $\mathrm{div}\,u_0=0$.
Moreover, it is known (\cite[Theorem~3.1]{BMN1}; see also \cite[(2.24)]{BMN2}, \cite[Proposition~6.2(1)]{CDGG}, and Lemma~\ref{lem:BR} below for a proof) that 
\eqq{\bbar{B_R(a\osc ,a\osc )}=0.}
These properties imply that $\bbar{B_R(U,U)}=B_R(\bbar{U},\bbar{U})$.
Consequently, the limit equation \eqref{limit} can be decomposed into the following three equations:
\IVP{FLbarh}{
&\p _t\bbar{U}^h+\nu (-\Delta _h)^\al \bbar{U}^h+\HP _h(\bbar{U}^h\cdot \nabla ^h)\bbar{U}^h=0,\qquad t>0,\quad x\in \T ^2,\\
&\bbar{U}^h\big| _{t=0}=\bbar{u_0}^h\quad \text{with $\mathrm{div}_h\,\bbar{u_0}^h=0$},}
\IVP{FLbar3}{&\p _t\bbar{U}_3+\nu (-\Delta _h)^\al \bbar{U}_3+(\bbar{U}^h\cdot \nabla ^h)\bbar{U}_3=0,\qquad t>0,\quad x\in \T ^2,\\
&\bbar{U}_3\big| _{t=0}=\bbar{u_0}_{,3},}
\IVP{FLosc}{&\p _tU\osc +\nu (-\Delta )^\al U\osc \\
&\hx +B_R(\bbar{U},U\osc )+B_R(U\osc ,\bbar{U})+B_R(U\osc ,U\osc )=0,\quad t>0,~~ x\in \T ^3,\\
&U\osc \big| _{t=0}=u_{0,\mathrm{osc}}\quad \text{with $\mathrm{div}\,u_{0,\mathrm{osc}}=0$,}}
where
$(-\Delta_h)^\alpha f:=\mathcal{F}^{-1}[(n_1^2+n_2^2)^\alpha\hat f]$.

The $H^1$ energy estimate for the 2D part $\bbar{U}(t)$ can be obtained straightforwardly
: 
\begin{equation}\label{energy2}
\|\bbar U(t)\|_{H^1}^2+\nu \int_0^t\|\bbar U(t')\|_{H^{1+\alpha}}^2dt'\leq C(\|\bbar U(0)\|_{H^1})<\infty .
\end{equation}
The key is to 
control the following norm globally in time:
\begin{equation}\label{energy1}
\| U\osc (t)\|_{H^1}^2+\nu \int_0^t\| U\osc (t')\|_{H^{1+\alpha}}^2dt'.
\end{equation}
In order to control the above quantity in the weak viscosity case ($\al <1$), we essentially use a new estimate on non-trivial resonant three-wave interactions (Lemma~\ref{lem:combi} below).
We prove this estimate in Section \ref{non-trivial resonant part} by using some tools from elementary number theory, which is the crucial idea in this paper.
However, this kind of argument is only available for the case of \emph{regular} (or \emph{rational}) torus.
As a result, our main theorem is also restricted to that case. 
Once we get the above energy estimates, we will be able to deduce Proposition~\ref{prop:gwp-FL}.
See Section \ref{GWP for the limit equation} for details.

To prove the main theorem, 
%
%
it suffices to ensure under the large Coriolis parameter assumption that solutions to the original equation and the resonant equation that coincide at $t=0$ stay close to each other until an arbitrarily given time $t=T$, which also means that we can obtain the existence theorem in $[0,T]$.
To this end, it suffices to control the non-resonant part in \eqref{decomposition}.
More precisely, our task is to estimate the difference $w(t):=v(t)-U(t)$, which satisfies
\begin{equation*}
\begin{split}
\left\{ 
\begin{split}
&\p _tw+\nu (-\Delta )^\al w+B_R(w,v)+B_R(U,w)+B_{N\!R}(\Om t; v,v)=0,\quad t>0,\\
&w\big| _{t=0}=0.
\end{split} \right.
\end{split} 
\end{equation*}
Let $\ti{E}$ be the global upper bound of $U(t)$ in $H^1$ given in Proposition~\ref{prop:gwp-FL}, and let $\ti{T}_L$ be a local existence time of the $H^1$ solution to \eqref{FNS2} of size $2\ti{E}$.
The following lemma enables us to control the non-resonant part $B_{N\!R}$:
\begin{lem}\label{lem:est-Om'}
For any $\de >0$, there exists $\Om _0=\Om _0(\de , \al ,\nu ,E)>0$ such that the following holds for $|\Om |\ge \Om _0$.
Let $T\ge 0$, and assume that the solution $v$ of \eqref{FNS2} exists on $[0,T]$ with $\tnorm{v}{L^\I ([0,T];H^1)}^2+\nu \tnorm{v}{L^2([0,T];H^{1+\al})}^2\le (2\ti{E})^2$.
Then, we have
\eqq{&\Big| \int _0^t\LR{B_{N\!R}(\Om t'; v(t'),v(t'))\,,\,w(t')}_{H^1}\,dt'\Big| \\
&\le \de +\frac{1}{4}\Big( \tnorm{w(t)}{H^1}^2+\nu \int _0^t\tnorm{w(t')}{H^{1+\al}}^2\,dt'\Big) ,\qquad t\in [0,T+\ti{T}_L].}
\end{lem}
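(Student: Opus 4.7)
The plan is to exploit fast oscillation of the phase $e^{-i\Om t'\om^\sgm_{nkm}}$ via integration by parts in $t'$, after first truncating to low frequencies so that the phase stays uniformly away from zero. The structure is the standard non-stationary phase argument used in \cite{BMN1,BMN2,CDGG}, with the twist that the energy estimate for $w$ we are trying to close is at the $H^1$ level with weaker-than-standard dissipation $(-\Delta)^\al$.

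First I would fix a large dyadic cutoff $N=N(\de,\al,\nu,E)$ and split
\eqq{B_{N\!R}(\Om t';v,v)=B_{N\!R}^{\le N}+B_{N\!R}^{>N},}
where in $B_{N\!R}^{>N}$ at least one of $|n|,|k|,|m|$ exceeds $N$. A direct trilinear estimate, of the same type used to prove Theorem~\ref{prop:lwp}, yields
\eqq{\Big|\int_0^t\LR{B_{N\!R}^{>N}(\Om t';v,v),w}_{H^1}\,dt'\Big|\le C(\al)N^{-c(\al)}\tnorm{v}{L^\I_tH^1}\tnorm{v}{L^2_tH^{1+\al}}\tnorm{w}{L^2_tH^{1+\al}}.}
Using the a priori bound $\tnorm{v}{L^\I_tH^1}^2+\nu\tnorm{v}{L^2_tH^{1+\al}}^2\le (2\ti E)^2$ and Young's inequality, the right-hand side is bounded by $\frac\de2+\frac18(\tnorm{w(t)}{H^1}^2+\nu\int_0^t\tnorm{w(t')}{H^{1+\al}}^2\,dt')$ provided $N$ is chosen large enough in terms of $\de,\al,\nu,E,\ti E$.

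Next, for the low-frequency piece $B_{N\!R}^{\le N}$, the sum runs over finitely many triples $(n,k,m)\in(\Bo Z^3\setminus\{0\})^3$ with $|n|,|k|,|m|\le N$ and $n=k+m$, $\om^\sgm_{nkm}\neq0$, so there exists a constant $c_N>0$ with $|\om^\sgm_{nkm}|\ge c_N$. Integration by parts in $t'$ in each oscillatory integral gives
\eqq{\int_0^t e^{-i\Om t'\om^\sgm_{nkm}}\Phi_{nkm}^\sgm(t')\,dt'=\frac{1}{i\Om\om^\sgm_{nkm}}\Big[\Phi_{nkm}^\sgm(0)-e^{-i\Om t\om^\sgm_{nkm}}\Phi_{nkm}^\sgm(t)+\int_0^t e^{-i\Om t'\om^\sgm_{nkm}}\p_{t'}\Phi^\sgm_{nkm}(t')\,dt'\Big],}
where $\Phi^\sgm_{nkm}(t')$ is the multilinear expression in $\hhat v^{\sgm_1}(k,t')$, $\hhat v^{\sgm_2}(m,t')$, $\hhat w(n,t')$ coming from the definition of $B_{N\!R}$ paired with $w$. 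The boundary term at $t'=0$ vanishes since $w(0)=0$. The boundary term at $t'=t$ is bounded, after reassembling the finite sum, by $\frac{C(N)}{|\Om|}\tnorm{v(t)}{H^1}^2\tnorm{w(t)}{H^1}$, which by the assumed bound on $v$ and Young's inequality is $\le\frac\de4+\frac18\tnorm{w(t)}{H^1}^2$ once $|\Om|\ge\Om_0(\de,N)$. For the bulk term, I would substitute $\p_{t'}v$ and $\p_{t'}w$ using \eqref{FNS2} and the $w$-equation; each resulting multilinear expression is a sum of cubic or quartic terms in $v,w$ that, because of the hard frequency cutoff $|n|,|k|,|m|\le N$, reduces to a finite-dimensional bilinear/trilinear form and is controlled in modulus by $\frac{C(N,\nu,\al)}{|\Om|}$ times polynomial expressions in $\tnorm{v}{L^\I_tH^1}$, $\tnorm{v}{L^2_tH^{1+\al}}$, $\tnorm{w}{L^\I_tH^1}$, $\tnorm{w}{L^2_tH^{1+\al}}$. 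Taking $\Om_0$ large enough (depending on $N$, hence on $\de,\al,\nu,E$) makes the total low-frequency contribution $\le\frac\de4$, which combined with the high-frequency bound yields the claim.

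The main obstacle is the treatment of the bulk term after integration by parts: one must substitute $\p_{t'}v,\p_{t'}w$ from the evolution equations and verify that the resulting cubic and quartic multilinear expressions (which in principle involve $(-\De)^\al$ and $B_R,B_{N\!R}$ again) can all be dominated by the already-controlled norms of $v$ and $w$, with the loss absorbed into the $\frac{C(N)}{|\Om|}$ prefactor. The frequency cutoff $N$ is what makes this work painlessly, since on the truncated range every operator is bounded on every Sobolev space with a constant depending only on $N$; the subtlety is merely to order the choices so that $N$ is picked first (to absorb the high-frequency remainder) and $\Om_0$ afterwards (to absorb the low-frequency remainder).
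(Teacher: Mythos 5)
Your plan matches the paper's proof: truncate to a low-frequency range, bound the non-resonant phase below there, integrate by parts in $t'$, and substitute $\partial_{t'}v,\partial_{t'}w$ from the evolution equations to estimate the bulk term. Two remarks worth recording. First, the justification that ``on the truncated range every operator is bounded on every Sobolev space with a constant depending only on $N$'' is not literally true once you substitute $\partial_{t'}v=-\nu(-\Delta)^\alpha v-B(\Omega t';v,v)$: even with output frequency $|k|\le N$, the coefficient $\widehat{B(\Omega t';v,v)}(k)$ is a convolution over \emph{all} frequency pairs of $v$, so the bulk term is not a finite-dimensional expression. What actually closes the argument (and what the paper does) is that only the output frequency is truncated, so $\|P_{\le N}\partial_{t'}v\|_{H^s}\le N^s\|\partial_{t'}v\|_{L^2}$, and the untruncated $L^2$ norm is then controlled by $\|\partial_{t'}v\|_{L^2}\lesssim (\nu+\|v\|_{H^1})\|v\|_{H^{1+\alpha}}$ via the Sobolev product estimate, which the hypothesis on $v$ provides; likewise for $\partial_{t'}w$. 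Second, your qualitative lower bound $|\omega^\sigma_{nkm}|\ge c_N>0$ by finiteness is enough for Lemma~\ref{lem:est-Om'} as stated, though the paper instead derives the explicit bound $|\omega^\sigma_{nk(n-k)}|\gtrsim N^{-12}$ on the truncated set, which is what produces the quantitative form of $\Omega_0$ used in Theorem~\ref{main}.
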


Roughly saying, we can control the contribution from the non-resonant forcing term by an arbitrarily small constant $\delta$.
By the above lemma, which will be restated as Lemma~\ref{lem:est-Om} and proved in Section \ref{sec:error}, we can prove the main theorem (Theorem\ref{main}).
For the precise argument, see Section \ref{sec:error}.

\vspace{0.5cm}
\noindent
{\bf Acknowledgments.}\ 
We would like to thank the referees for carefully reading the previous version of this article and giving us useful suggestions and comments.
The first author was partially supported by 
Grant-in-Aid for Young Scientists (B), No.~24740086 and No.~16K17626,
Japan Society for the Promotion of Science.
The second author was also partially supported by 
Grant-in-Aid for Young Scientists (A), No.~17H04825 and Grant-in-Aid for Scientific Research (B), No.~17H02860,
Japan Society for the Promotion of Science.


\bigskip
\section{Proof of local well-posedness}

In this section, we shall establish local well-posedness for \eqref{NSC}, i.e., Theorem~\ref{prop:lwp}.
Let us consider the corresponding integral equation:
\eq{I-FNS}{u(t)=S(t)u_0-\int _0^tS(t-t')\HP (u(t')\cdot \nabla )u(t')\,dt',\qquad S(t):=e^{-\nu t(-\Delta )^\al}\cL (\Om t).}
Note that $\{ S(t)\} _{t\ge 0}$ is a continuous semigroup of contractions on $H^s$ for any $s\in \R$.

\begin{lem}\label{lem:smoothing}
Let $\nu ,\al >0$, $\Om \in \R$, $s\in \R$ and $\theta \ge 0$.
Then, for divergence-free and mean-zero vector fields $a$, it holds that
\eqq{\norm{S(t)a}{H^{s+\th}}\lec (\nu t)^{-\frac{\th}{2\al}}\tnorm{a}{H^s},\qquad t>0.}
The implicit constant depends only on $\frac{\th}{2\al}$ and is independent of $\nu ,t, s, \Om$. 
\end{lem}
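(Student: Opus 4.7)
The plan is as follows. First I would exploit the fact that $\cL(\Om t)$ acts as an isometry on $H^r$ for every $r\in\R$ when restricted to divergence-free mean-zero vector fields. This is immediate from the explicit formula for $\cL(\Om t)$ recalled just before the lemma: for each $n\in\Bo{Z}^3\setminus\{0\}$, the two eigenvectors $e^{\pm}(n)$ form an orthonormal basis of the divergence-free subspace $\{\hhat{a}(n)\in\Bo{C}^3 : n\cdot\hhat{a}(n)=0\}$, and $\cL(\Om t)$ multiplies the components of $\hhat{a}(n)$ along $e^{\pm}(n)$ by unimodular scalars $e^{\mp i\Om t n_3/|n|}$. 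Consequently $|\widehat{\cL(\Om t)a}(n)|=|\hhat{a}(n)|$ for every $n\ne 0$, so $\|\cL(\Om t)a\|_{H^r}=\|a\|_{H^r}$; in particular this explains why the constant in the claimed bound is independent of $\Om$ and $s$.

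Second, since $e^{-\nu t(-\Delta)^\al}$ is the Fourier multiplier with symbol $e^{-\nu t|n|^{2\al}}$, Plancherel yields
\begin{equation*}
\|S(t)a\|_{H^{s+\th}}^2=\sum_{n\ne 0}(1+|n|^2)^{s+\th}\,e^{-2\nu t|n|^{2\al}}\,|\hhat{a}(n)|^2.
\end{equation*}
To extract the smoothing gain I would factor $(1+|n|^2)^{s+\th}=(1+|n|^2)^s(1+|n|^2)^\th$ and bound $(1+|n|^2)^\th e^{-2\nu t|n|^{2\al}}$ uniformly by a constant multiple of $(\nu t)^{-\th/\al}$. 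Since $|n|\ge 1$ on $\Bo{Z}^3\setminus\{0\}$, one has $(1+|n|^2)^\th\le 2^\th|n|^{2\th}$, so after the substitution $r=2\nu t|n|^{2\al}$ the problem reduces to the elementary calculus inequality $\sup_{r\ge 0}r^{\th/\al}e^{-r}=(\th/\al)^{\th/\al}e^{-\th/\al}<\infty$, a finite constant depending only on $\th/\al$, hence only on $\th/(2\al)$. Taking square roots and recombining with the isometry of $\cL(\Om t)$ then produces the claimed bound on $\|S(t)a\|_{H^{s+\th}}$.

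There is no genuine obstacle here: this is a standard parabolic smoothing estimate for the fractional heat semigroup in which the Coriolis propagator contributes only unimodular phases and therefore drops out of the analysis. The mean-zero assumption is used solely to ensure that the eigenvector decomposition of $\cL(\Om t)$ is well defined at every retained Fourier mode, while the divergence-free assumption guarantees $\hhat{a}(n)\in\mathrm{Ran}\,\hhat{\HP}(n)$ so that $\{e^{\pm}(n)\}$ is an orthonormal basis for the space in which $\hhat{a}(n)$ actually lives.
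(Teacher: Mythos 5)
Your proof is correct and follows essentially the same route as the paper: the Poincar\'e propagator $\cL(\Om t)$ leaves the modulus of each Fourier coefficient unchanged (hence is an $H^s$ isometry on divergence-free mean-zero fields), and the smoothing factor $(\nu t)^{-\th/(2\al)}$ comes from the elementary bound $\sup_{r>0}r^{\th/(2\al)}e^{-r}<\infty$ applied to the symbol of $e^{-\nu t(-\Delta)^\al}$. The only cosmetic difference is that you work with the inhomogeneous weight $(1+|n|^2)^{s+\th}$ and use $|n|\ge 1$, whereas the paper uses the homogeneous weight directly, which is immaterial since mean-zero fields make the two norms equivalent.
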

\begin{rem}
In the following argument, we apply this lemma only with $0\le \th <2\al$.
Then, we can take the implicit constant independent of $\al$ and $\th$.
\end{rem}
\begin{proof}
We see that
\eqq{\norm{S(t)a}{H^{s+\th}}&=\norm{|n|^{\th}e^{-\nu t|n|^{2\al}}\cdot |n|^s\big[ \F \cL (\Om t)a\big] (n)}{\ell ^2(\Bo{Z}^3)}\\
&\lec \norm{|n|^{\th}e^{-\nu t|n|^{2\al}}}{\ell ^\I (\Bo{Z}^3)}\tnorm{a}{H^s}\le \big[ \sup _{r>0}r^{\frac{\th}{2\al}}e^{-r}\big] (\nu t)^{-\frac{\th}{2\al}}\tnorm{a}{H^s}.\qedhere
}
\end{proof}

\begin{proof}[Proof of Theorem~\ref{prop:lwp}]
We will apply fixed point argument in the Banach space
\eqs{X^s_T:=\Shugo{u\in C([0,T];H^s)\cap C((0,T];H^{s+\al})}{\tnorm{u}{X_{T}^s}<\I },\\
\tnorm{u}{X_{T}^s}:=\sup _{0<t\le T}\Big( \tnorm{u(t)}{H^s}+(\nu t)^{\frac{1}{2}}\tnorm{u(t)}{H^{s+\al}}\Big) .}

By Lemma~\ref{lem:smoothing}, we have
\eqq{\norm{S(t)u_0}{X_T^s}\lec \tnorm{u_0}{H^s},\qquad T>0.}
Moreover, by continuity of $S(t)$ we see that $S(t)u_0\in C([0,\I );H^s)\cap C((0,\I );H^\I )$.

For the Duhamel term, let $u,v\in X^s_T$.
We first consider the estimate when $s=1$.
By using Lemma~\ref{lem:smoothing} and the Sobolev inequality (see Lemma~\ref{lem:Sobolev} below), we have
\eq{est:nonl-1}{&\norm{\int _0^tS(t-t')\HP (u(t')\cdot \nabla )v(t')\,dt'}{H^1}\\
&\lec \int _0^t\big[ \nu (t-t')\big] ^{-\frac{3}{4\al}}\norm{(u(t')\cdot \nabla )v(t')}{H^{-\frac{1}{2}}}\,dt'\\
&\lec \int _0^t\big[ \nu (t-t')\big] ^{-\frac{3}{4\al}}\tnorm{u(t')}{H^{1}}\tnorm{\nabla v(t')}{L^2}\,dt'\\
&\lec _\al \nu ^{-1}(\nu T)^{1-\frac{3}{4\al}}\tnorm{u}{L^\I ([0,T];H^1)}\tnorm{v}{L^\I ([0,T];H^1)}}
for $0<t\le T$, where we have used the assumption $\al >\frac{3}{4}$.
Similarly,
\eqq{&(\nu t)^{\frac{1}{2}}\norm{\int _0^tS(t-t')\HP (u(t')\cdot \nabla )v(t')\,dt'}{H^{1+\al}}\\
&\lec (\nu t)^{\frac{1}{2}}\int _0^t\big[ \nu (t-t')\big] ^{-\frac{3}{4\al}}\norm{(u(t')\cdot \nabla )v(t')}{H^{\al -\frac{1}{2}}}\,dt'\\
&\lec (\nu t)^{\frac{1}{2}}\int _0^t\big[ \nu (t-t')\big] ^{-\frac{3}{4\al}}\tnorm{u(t')}{H^1}\tnorm{\nabla v(t')}{H^{\al}}\,dt'\\
&\lec _\al \nu ^{-1}(\nu T)^{1-\frac{3}{4\al}}\tnorm{u}{L^\I ([0,T];H^1)}\tnorm{(\nu \cdot )^{\frac{1}{2}}v(\cdot )}{L^\I ((0,T];H^{1+\al})}}
for $0<t\le T$, and therefore,
\eqq{\norm{\int _0^tS(t-t')\HP (u(t')\cdot \nabla )v(t')\,dt'}{X^1_T}\le C(\al )\nu ^{-1}(\nu T)^{1-\frac{3}{4\al}}\tnorm{u}{X_T^1}\tnorm{v}{X_T^1}.}
The estimate for general $s\ge 1$ can be deduced from that for $s=1$ as
\eqq{&\norm{\int _0^tS(t-t')\HP (u(t')\cdot \nabla )v(t')\,dt'}{X^s_T}\\
&\le C(s, \al )\nu ^{-1}(\nu T)^{1-\frac{3}{4\al}}\big( \tnorm{u}{X_T^s}\tnorm{v}{X_T^1}+\tnorm{u}{X_T^1}\tnorm{v}{X_T^s}\big) .}
(Note that the $s$-dependence may come into the estimate when we divide the derivative $|\nabla|^{s-1}$, since $|n|^{s-1}\le 2^{\max \{ s-2,0\}}(|k|^{s-1}+|n-k|^{s-1})$.)

We next show that for $u,v\in X^s_T$
\[ I(u,v)(t):=\int _0^tS(t-t')\HP (u(t')\cdot \nabla )v(t')\,dt'\in C([0,T];H^s)\cap C((0,T];H^{s+\al}),\]
namely, continuity in $t$.
Set $F(t):=\HP (u(t)\cdot \nabla )v(t)$. 
As seen in the above estimates, we have $F\in C([0,T];H^{s-\frac{3}{2}})\cap C((0,T];H^{s+\al -\frac{3}{2}})$.
For $t\in [0,T)$ and $0<\de \ll 1$, we have
\eqq{&\tnorm{I(u,v)(t+\de )-I(u,v)(t)}{H^s}\\
&\le \int _t^{t+\de}\norm{S(t+\de -t')F(t')}{H^s}\,dt'+\int _0^t\norm{S(t-t')\big[ S(\de )F(t')-F(t')\big]}{H^s}\,dt'\\
&\lec _\nu \de ^{1-\frac{3}{4\al}}\tnorm{F}{L^\I ([0,T];H^{s-\frac{3}{2}})}+\int _0^t (t-t')^{-\frac{3}{4\al}}\tnorm{S(\de )F(t')-F(t')}{H^{s-\frac{3}{2}}}\,dt'.
}
The right-hand side tends to $0$ as $\de \to 0$ by the dominated convergence theorem.
Similarly, for $t\in (0,T]$, 
\eqq{&\tnorm{I(u,v)(t-\de )-I(u,v)(t)}{H^s}\\
&\le \int _0^\de \norm{S(t-t')F(t')}{H^s}\,dt'+\int _{\de}^t\norm{S(t-t')\big[ F(t')-F(t'-\de )]}{H^s}\,dt'\\
&\lec _\nu \de (t-\de )^{-\frac{3}{4\al}}\tnorm{F}{L^\I ([0,T];H^{s-\frac{3}{2}})}+\int _\de ^t (t-t')^{-\frac{3}{4\al}}\tnorm{F(t')-F(t'-\de )}{H^{s-\frac{3}{2}}}\,dt'\to 0
}
as $\de \to 0$.
Next, for any $0<T'\!\ll 1$ the above argument and $F\in C([T',T];H^{s+\al -\frac{3}{2}})$ show that $\int _{T'}^tS(t-t')F(t')\,dt'\in C([T',T];H^{s+\al})$.
On the other hand, for $t\in [3T',T]$ and $\de \in [-T',T']$, 
\eqq{&\norm{\int _0^{T'}S(t+\de -t')F(t')\,dt'-\int _0^{T'}S(t-t')F(t')\,dt'}{H^{s+\al}}\\
&\le \int _0^{T'}\norm{\big[ S(t+\de -T'-t')-S(t-T'-t')\big] S(T')F(t')}{H^{s+\al}}\,dt'.
}
By the dominated convergence theorem and $S(T')F(\cdot )\in L^\I ([0,T'];H^{s+\al})$, the right-hand side tends to $0$ as $\de \to 0$.
Since $0<T'\ll 1$ is arbitrary, we conclude $I(u,v)\in C([0,T];H^s)\cap C((0,T];H^{s+\al})$.

Then, the contraction mapping principle can be applied if we take $T=T_s>0$ so that
\eq{def:T_L}{\nu ^{-1}(\nu T_s)^{1-\frac{3}{4\al}}\tnorm{u_0}{H^s}\ll _{s,\al}1.}
In particular, we obtain existence (and continuous dependence on initial data) of a solution on $[0,T_s]$ belonging to $X_{T_s}^s\subset C([0,T_s];H^s)\cap C((0,T_s];H^{s+\al})$.
The estimate \eqref{est:nonl-1} can be used to show uniqueness of solutions in $C([0,T];H^1)$.
Smoothness of the solution is verified by iterating the above construction of local solutions which gains $\al$ regularity, together with the uniqueness.

To show that the local existence time depends only on the $H^1$ norm of $u_0\in H^s$, it suffices to apply the above construction also with $s=1$, so that by uniqueness we have $u\in X^s_{T_s}\cap X_{T_1}^1\subset C([0,T_1];H^s)\cap C((0,T_1];H^\I )$.
Defining $T_L:=T_1$, we have \eqref{est:let} and \eqref{est:ls}.

From the Sobolev inequality (Lemma~\ref{lem:Sobolev}), we have
\eqq{\norm{\HP (f\cdot \nabla )g}{L^2}\le \tnorm{f}{H^1}\tnorm{g}{H^{\frac{3}{2}}}.}
This and interpolation show that, for $\al \ge \frac{3}{4}$ and $s\ge \frac{3}{4}$,
\eqq{&|\LR{\HP (f\cdot \nabla )f\,,\,f}_{H^s}|\le \norm{\HP (f\cdot \nabla )f}{H^{s-\frac{3}{4}}}\tnorm{f}{H^{s+\frac{3}{4}}}\\
&\lec _s \big( \tnorm{f}{H^{s+\frac{1}{4}}}\tnorm{f}{H^{\frac{3}{2}}}+\tnorm{f}{H^{1}}\tnorm{f}{H^{s+\frac{3}{4}}}\big) \tnorm{f}{H^{s+\frac{3}{4}}}\lec  \tnorm{f}{H^{1}}\tnorm{f}{H^{s+\frac{3}{4}}}^2 .}
By skew-symmetry of the Coriolis term in \eqref{NSC}, the standard $H^s$-energy estimate with the above inequality implies that
\eq{ineq:Hsenergy}{\frac{d}{dt}\tnorm{u(t)}{H^s}^2+2\nu \tnorm{u(t)}{H^{s+\al}}^2&\le C_*(s)\tnorm{u(t)}{H^1}\tnorm{u(t)}{H^{s+\frac{3}{4}}}^2\\
&\le C_*(s)\tnorm{u(t)}{H^1}\tnorm{u(t)}{H^{s+\al}}^2}
for any smooth solutions $u(t)$ of \eqref{NSC}, where the constant $C_*(s)>0$ depends only on $s$.

From \eqref{est:ls} and interpolation, the solution $u\in X^1_{T_L}$ obtained above satisfies
\eqq{\tnorm{u(t)}{H^1}\tnorm{u(t)}{H^{\frac{7}{4}}}^2\le C(\nu t)^{-\frac{3}{4\al}}\tnorm{u_0}{H^1}^3.\,\qquad t\in (0,T_L].}
Hence, by integrating \eqref{ineq:Hsenergy} with $s=1$, we have
\eqq{\tnorm{u(t)}{H^1}^2+2\nu \int _0^t\tnorm{u(t')}{H^{s+\al}}^2\,dt'\le \tnorm{u_0}{H^1}^2+C(\al )\nu ^{-1}(\nu t)^{1-\frac{3}{4\al}}\tnorm{u_0}{H^1}^3}
for $t\in (0,T_L]$ if $\al >\frac{3}{4}$.
Together with \eqref{def:T_L}, we obtain \eqref{est:1al}.

Finally, we prove the global existence for small initial data.
Assume that the initial data $u_0$ satisfies the smallness condition
\eqq{\tnorm{u_0}{H^1}\le \frac{\nu}{2C_*(1)},}
and define 
\eqq{T_*:=\sup \Shugo{T\ge 0}{\text{the solution $u(t)$ exists and $\tnorm{u(t)}{H^1}\le \frac{\nu}{C_*(1)}$ on $[0,T]$}}.}
By the local theory established above, we have $T_*>0$.
Furthermore, \eqref{ineq:Hsenergy} with $s=1$ shows that
\eq{ineq:smalldata}{\frac{d}{dt}\tnorm{u(t)}{H^s}^2+\nu \tnorm{u(t)}{H^{s+\al}}^2\le 0,\qquad t\in (0,T_*)}
with $s=1$.
In particular, $\tnorm{u(t)}{H^1}$ is decreasing and thus $T_*=\I$.
If we further assume that
\eqq{\Big( \tnorm{u(t)}{H^1}\le \Big)~\tnorm{u_0}{H^1}\le \frac{\nu}{2C_*(s)},}
then \eqref{ineq:smalldata} holds for this choice of $s$, which combined with $\tnorm{u}{H^s}\le \tnorm{u}{H^{s+\al}}$ implies the exponential decay of $\tnorm{u(t)}{H^s}$ as $t\to \I$.
\end{proof}


\bigskip
\section{Properties of the nonlinear term in the resonant equation}\label{kl}

In this section we recall some cancellation properties of the nonlinear term $B_R$ in the resonant equation. 
These properties have been essentially proved in the previous works \cite{BMN1,BMN2,BMN4,CDGG}, but we shall present a proof of them for the sake of completeness.

\begin{lem}[\mbox{cf.} Theorem~3.1 in \cite{BMN1}; see also Proposition~6.2~(1) in \cite{CDGG}]\label{lem:BR}
For any 

\noindent
divergence-free mean-zero smooth vector field $a$, we have $\bbar{B_R(a\osc ,a\osc )}=0$.
\end{lem}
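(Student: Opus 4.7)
The plan is to expand the Fourier symbol of $\bbar{B_R(a\osc,a\osc)}$ at a frequency $n$ with $n_3=0$ and show that, after symmetrizing in the convolution variables, the summand vanishes identically on the resonance set.

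First, I would restrict $[\F B_R(a\osc,a\osc)](n)$ to $n_3=0$. Since $a\osc$ is supported in modes with nonzero vertical frequency, the convolution conditions force $k_3,m_3\neq 0$ and $k_3+m_3=0$, so $m_3=-k_3$. The resonance condition $\om^\sgm_{nkm}=\sgm_1\frac{k_3}{|k|}+\sgm_2\frac{m_3}{|m|}=0$ (the $n_3$-term is absent) then yields $\sgm_1 |m|=\sgm_2|k|$, and since $|k|,|m|>0$ this forces $\sgm_1=\sgm_2=:\tau$ and $|k|=|m|$, hence also $|k^h|=|m^h|$. In particular the summation set is invariant under $(k,m)\leftrightarrow(m,k)$ with fixed $\tau$, so I can symmetrize that swap.

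Next, using the divergence-free relation $\hhat{a}^\tau(k)\cdot k=0$, I would replace $\hhat{a}^\tau(k)\cdot m$ by $\hhat{a}^\tau(k)\cdot n$ (and likewise for $k\leftrightarrow m$). The question reduces to showing, for each fixed $\tau,\sgm_3$ and each resonant $(k,m)$, that
\eqq{
(\hhat{a}^\tau(k)\cdot n)\bigl(\hhat{a}^\tau(m)\,\big|\,e^{\sgm_3}(n)\bigr)+(\hhat{a}^\tau(m)\cdot n)\bigl(\hhat{a}^\tau(k)\,\big|\,e^{\sgm_3}(n)\bigr)=0.
}
Writing $\hhat{a}^\tau(k)=\al^\tau(k)e^\tau(k)$, this reduces to an identity about the eigenvectors $e^\tau$ alone. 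Using the explicit formula for $e^\tau(k),e^\tau(m),e^{\sgm_3}(n)$ (noting $n_3=0$, $|m|=|k|$, $m_3=-k_3$), a direct computation yields
\eqq{
e^\tau(k)\cdot n&=k_3\,(k^h\!\cdot\! n^h)-i\tau|k|\,(k\times n)_3,\\
e^\tau(m)\cdot n&=-k_3\,(m^h\!\cdot\! n^h)+i\tau|k|\,(k\times n)_3,
}
together with analogous formulas for $(e^\tau(k)\,|\,e^{\sgm_3}(n))$ and $(e^\tau(m)\,|\,e^{\sgm_3}(n))$. Substituting and expanding, the cross-terms linear in $|k|^2$ cancel and the bracket factors as
\eqq{
\frac{|k^h|}{2|k|\,|n|}\bigl(k^h\!\cdot\! n^h-m^h\!\cdot\! n^h\bigr)\bigl(|n|k_3+i\sgm_3(k\times n)_3\bigr).
}
Since $|k^h|^2-|m^h|^2=(k^h-m^h)\cdot(k^h+m^h)=(k^h-m^h)\cdot n^h$, the constraint $|k^h|=|m^h|$ forces the first scalar factor to vanish, completing the proof.

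The main obstacle is purely computational: keeping track of complex conjugations in the Hermitian inner product and of the many signs arising from the $(\pm)$ eigenvector decomposition. Once the symmetrization is set up and the divergence-free substitution is applied, the cancellation hinges on the single geometric fact that $|k|=|m|$ with $m_3=-k_3$ implies $|k^h|=|m^h|$, and this is precisely what is delivered by the resonance condition at $n_3=0$.
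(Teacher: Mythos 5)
Your proposal is correct, and its skeleton coincides with the paper's: restrict to $n_3=0$, observe that the resonance condition with $k_3=-m_3\neq 0$ forces $\sgm_1=\sgm_2$ and $|k|=|m|$, symmetrize in $k\leftrightarrow m$, and reduce to a bilinear identity for the eigenvectors alone. Where you diverge is in how that identity is verified. You first use $e^\tau(k)\cdot k=0$ to trade $\hhat{a}^\tau(k)\cdot m$ for $\hhat{a}^\tau(k)\cdot n$, then compute in coordinates with the explicit formulas for $e^\pm$; I checked the expansion, and the symmetrized sum does factor (with your implicit normalizations) as a multiple of $(k^h\cdot n^h-m^h\cdot n^h)\bigl(|n|k_3+i\sgm_3(k\times n)_3\bigr)$, the surviving combination being $k_3|k^h|^2|n|+i\sgm_3(|k|^2-k_3^2)(k\times n)_3$, so the vanishing indeed comes from $|k^h|^2-|m^h|^2=(k^h-m^h)\cdot n^h=0$. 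The paper instead argues coordinate-free: using the vector triple product formula and the relation $n\times e^\pm(n)=\pm i|n|e^\pm(n)$ of \eqref{epmformula} together with $|k|=|m|$, it shows the stronger vector identity $[e^\tau(k)\cdot m]\,e^\tau(m)+[e^\tau(m)\cdot k]\,e^\tau(k)=[e^\tau(k)\cdot e^\tau(m)]\,n$, and then the pairing with $e^{\sgm_3}(n)$ dies because $n\perp\mathrm{Ran}\,\hhat{\HP}(n)$. The paper's route avoids the sign/conjugation bookkeeping you flag as the main hazard and explains structurally why the cancellation happens (the symmetrized output is a pure gradient direction), while yours is more elementary but case-specific; one small point you should add is that on this resonant set $k^h\neq 0$ and $m^h\neq 0$ automatically (since $|k^h|=|m^h|$ and $k^h+m^h=n^h=n\neq 0$), so the generic branch of the eigenvector formula you compute with is the only one that occurs.
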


\begin{proof}
We first notice that, under $n_3=0$ and $k_3\neq 0$, 
\eqq{\om ^\sgm _{nk(n-k)}=\sgm _1\frac{k_3}{|k|}+\sgm _2\frac{-k_3}{|n-k|}=0\qquad \Longleftrightarrow \qquad \sgm _1=\sgm _2,\quad |k|=|n-k|.}
Hence,
\eqq{&\bbar{B_R(a\osc ,a\osc )}(x)\\
&=i\sum _{\mat{n\in \Bo{Z}^3\setminus \{ 0\} \\ n_3=0}}e^{in\cdot x}\sum _{(\sgm ,\sgm _3)\in \{ \pm\} ^2}\sum _{\mat{k\in \Bo{Z}^3\setminus \{ 0\} \\ k_3\neq 0\\ |k|=|n-k|}}\big( \hhat{a}(k)\big| e^\sgm (k)\big) \,\big( \hhat{a}(n-k)\big| e^\sgm (n-k)\big) \\
&\hspace{50pt}\cdot \Big( \big[ e^\sgm (k)\cdot (n-k)\big] e^\sgm (n-k)\Big| e^{\sgm _3}(n)\Big) e^{\sgm _3}(n)\\
&=\frac{i}{2}\sum _{\mat{n\in \Bo{Z}^3\setminus \{ 0\} \\ n_3=0}}e^{in\cdot x}\sum _{(\sgm ,\sgm _3)\in \{ \pm\} ^2}\sum _{\mat{k\in \Bo{Z}^3\setminus \{ 0\} \\ k_3\neq 0\\ |k|=|n-k|}}\big( \hhat{a}(k)\big| e^\sgm (k)\big) \,\big( \hhat{a}(n-k)\big| e^\sgm (n-k)\big) \\
&\hspace{30pt}\cdot \Big( \big[ e^\sgm (k)\cdot (n-k)\big] e^\sgm (n-k)+\big[ e^\sgm (n-k)\cdot k\big] e^\sgm (k)\Big| e^{\sgm _3}(n)\Big) e^{\sgm _3}(n).}
Hence, all we have to do is to show
\eq{claim:lem1}{\Big( \big[ e^\sgm (k)\cdot (n-k)\big] e^\sgm (n-k)+\big[ e^\sgm (n-k)\cdot k\big] e^\sgm (k)\Big| e^{\sgm _3}(n)\Big) =0}
for any $\sgm ,\sgm _3\in \{ \pm \}$, $n,k\in \Bo{Z}^3\setminus \{ 0\}$ such that $n_3=0$, $k_3\neq 0$, $|k|=|n-k|$.

By the formula in vector analysis, we have
\eqq{&e^\sgm (k)\times \big[ (n-k)\times e^\sgm (n-k)\big] \\
&\hx =\big[ e^\sgm (k)\cdot e^\sgm (n-k)\big] (n-k)-\big[ e^\sgm (k)\cdot (n-k)\big] e^\sgm (n-k),\\
&e^\sgm (n-k)\times \big[ k\times e^\sgm (k)\big] \\
&\hx =\big[ e^\sgm (n-k)\cdot e^\sgm (k)\big] k-\big[ e^\sgm (n-k)\cdot k\big] e^\sgm (k).}
By \eqref{epmformula} and the assumption, we have
\eqq{&e^\sgm (k)\times \big[ (n-k)\times e^\sgm (n-k)\big] =\sgm i|n-k|\big[ e^\sgm (k)\times e^\sgm (n-k)\big] \\
&=-\sgm i|k|\big[ e^\sgm (n-k)\times e^\sgm (k)\big] =-e^\sgm (n-k)\times \big[ k\times e^\sgm (k)\big] .}
Therefore, we have
\eqq{\big[ e^\sgm (k)\cdot (n-k)\big] e^\sgm (n-k)+\big[ e^\sgm (n-k)\cdot k\big] e^\sgm (k)=\big[ e^\sgm (k)\cdot e^\sgm (n-k)\big] n.}
Since $\hhat{\HP}(n)n=0$, the left-hand side belongs to $(\mathrm{Ran}\,\hhat{\HP}(n))^\perp$, and \eqref{claim:lem1} holds.
\end{proof}

\begin{lem}[\mbox{cf.} Theorem~5.3 in \cite{BMN1}; see also Proposition~6.2~(2) in \cite{CDGG}]\label{lem:BR2}
Let $s\ge 0$ and $a,b$ be any divergence-free and mean-zero smooth vector fields.
Assume that $b$ is real-valued.
Then, we have
\eqq{\LR{B_R(\bbar{a},b\osc )\,,\,b\osc}_{H^s}=\LR{B_R(b\osc ,\bbar{b})\,,\,b\osc}_{H^s}=\LR{B_R(a\osc ,b\osc )\,,\,b\osc}_{L^2}=0.}
\end{lem}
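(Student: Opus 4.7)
The plan is to reduce all three claims to a single base identity:
\[ (\ast)\quad \int _{\T ^3}B_R(a,b)\cdot b\,dx=0\qquad \text{for all divergence-free mean-zero smooth $a,b$ with $b$ real-valued.}\]
I would prove $(\ast)$ via the time-averaging representation $B_R(a,b)=\lim _{T\to \I}T^{-1}\int _0^TB(s;a,b)\,ds$, which follows at once from the Fourier formula, since $T^{-1}\int _0^Te^{-is\om}\,ds\to \chf{\om =0}$. Using that $\cL(-s)$ is the $L^2$-adjoint of $\cL(s)$, that both preserve real-valuedness and divergence-freeness, and that $\HP$ is self-adjoint, one unfolds
\eqq{\int B(s;a,b)\cdot b\,dx\;=\;\int (\cL(s)a\cdot \nabla )\cL(s)b\cdot \cL(s)b\,dx\;=\;-\tfrac{1}{2}\int (\nabla \cdot \cL(s)a)\,|\cL(s)b|^2\,dx\;=\;0}
for every $s$, and $(\ast)$ follows by averaging in $s$.

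Claim (iii) is immediate from $(\ast)$ applied to $a=a\osc$, $b=b\osc$ (real since $b$ and $\bar{b}$ are). For (i), the key observation is that the resonance set for $B_R(\bar{a},b\osc)$—with $k_3=0$ and $m_3=n_3\neq 0$—forces $|m|=|n|$. This lets the fractional Laplacian pass freely from the output slot into the second argument:
\[ (-\De )^{s/2}B_R(\bar{a},b\osc)\;=\;B_R\bigl(\bar{a},\,(-\De )^{s/2}b\osc\bigr).\]
Setting $U:=(-\De )^{s/2}b\osc$, which is again real, divergence-free, and mean-zero, one then has $\LR{B_R(\bar{a},b\osc),b\osc}_{H^s}=\LR{B_R(\bar{a},U),U}_{L^2}=0$ by $(\ast)$.

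For (ii), the analogous observation—the resonance for $B_R(b\osc,\bar{b})$ has $m_3=0$, $k_3=n_3\neq 0$, and hence $|k|=|n|$—lets $(-\De )^{s/2}$ pass to the first slot instead, reducing (ii) to the auxiliary $L^2$ identity
\[ \int _{\T ^3}B_R(u\osc ,\bar{b})\cdot u\osc \,dx\;=\;0\]
for any real, divergence-free, mean-zero $u\osc$ with Fourier support in $\{n_3\neq 0\}$. The main obstacle is that this identity does \emph{not} follow directly from $(\ast)$, because the repeated argument now occupies slots $1$ and $3$ rather than slots $2$ and $3$. I would overcome it by applying $(\ast)$ to $v:=u\osc +\bar{b}$ (still real, divergence-free, and mean-zero), obtaining $\int B_R(v,v)\cdot v\,dx=0$, and expanding the trilinear expression into its eight pieces. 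Seven of the eight vanish for one of three reasons: three by $(\ast)$ itself ($\int B_R(u\osc ,u\osc )\cdot u\osc$, $\int B_R(\bar{b},u\osc )\cdot u\osc$, and $\int B_R(\bar{b},\bar{b})\cdot \bar{b}$); three by orthogonality of $x_3$-Fourier supports (the terms pairing an output supported in $\{n_3\neq 0\}$ against $\bar{b}$, together with $\int B_R(\bar{b},\bar{b})\cdot u\osc$); and the last, $\int B_R(u\osc ,u\osc )\cdot \bar{b}$, by Lemma~\ref{lem:BR}. What remains is exactly $\int B_R(u\osc ,\bar{b})\cdot u\osc$, which is therefore forced to vanish.
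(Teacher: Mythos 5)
Your argument is correct, but it follows a genuinely different route from the paper's. The paper proves all three identities by direct Fourier-space computation: reality of $b$ gives $\hat b(n)^*=\hat b(-n)$, a change of variables $n\mapsto k-n$ plus symmetrization is performed, and the divergence-free relation $e^{\sigma_1}(k)\cdot k=0$ kills the symmetrized sum; the middle identity is reduced to Lemma~\ref{lem:BR} after redistributing the weight $|n|^{2s}=|k|^s|n|^s$ using the resonance-forced equality $|k|=|n|$. You instead isolate one mechanism: the cancellation $(\ast)$, obtained by writing $B_R$ as the time average of $B(\Omega t;\cdot,\cdot)$ and importing the classical identity $\int (u\cdot\nabla)v\cdot v\,dx=-\tfrac12\int(\nabla\cdot u)|v|^2\,dx$ through the unitarity and reality/divergence preservation of $\mathcal L$; you then commute the derivative multiplier using the same resonance-forced equalities $|m|=|n|$, $|k|=|n|$ (which the paper exploits implicitly in its computation), and you extract the mixed term of part (ii) by polarizing $(\ast)$ at $v=u_{\mathrm{osc}}+\overline{b}$, with seven of the eight terms vanishing by $(\ast)$, vertical-frequency support orthogonality, and Lemma~\ref{lem:BR} — all of these cancellations check out. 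Your approach is more structural and makes clear that a single energy cancellation underlies all three identities; the paper's computation stays entirely at the level of Fourier sums, so it needs no justification of the time-averaged limit defining $B_R$ (which in your argument requires the rapid decay of the Fourier coefficients of the smooth fields, and convergence strong enough to pass to the pairing). Two cosmetic remarks: your $(-\Delta)^{s/2}$ realizes the homogeneous $H^s$ pairing, which is consistent with the paper's convention of not distinguishing $H^s$ from $\dot H^s$ for mean-zero fields (with $(1-\Delta)^{s/2}$ the argument is unchanged, since $|m|=|n|$ also equalizes the inhomogeneous weights); and in $(\ast)$ only the second argument needs to be real, which is exactly what you use in part (iii), where the first slot $a_{\mathrm{osc}}$ may be complex.
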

\begin{proof}
Let us first consider $B_R(\bbar{a},b\osc )$.
Similarly to the proof of Lemma~\ref{lem:BR}, we see that
\eqq{&\LR{B_R(\bbar{a},b\osc )\,,\,b\osc}_{H^s}\\
&=i\sum _{(\sgm _1,\sgm )\in \{ \pm \} ^2}\sum _{\mat{n,k\in \Bo{Z}^3\setminus \{ 0\} \\ n_3\neq 0=k_3\\ |n|=|n-k|}}\big[ \hhat{a}^{\sgm _1}(k)\cdot (n-k)\big] \big[ \hhat{b}^\sgm (n-k)\cdot e^\sgm (n)^*\big] \big[ e^\sgm (n)\cdot |n|^{2s}\hhat{b}(n)^*\big] \\
&=i\sum _{(\sgm _1,\sgm )\in \{ \pm \} ^2}\sum _{\mat{n,k\in \Bo{Z}^3\setminus \{ 0\} \\ n_3\neq 0=k_3\\ |n|=|n-k|}}|n|^{2s}\big[ \hhat{a}^{\sgm _1}(k)\cdot (n-k)\big] \big[ \hhat{b}^\sgm (n-k)\cdot \hhat{b}^\sgm (n)^*\big] .}

Since $b$ is real-valued, $\hhat{b}(n)^*=\hhat{b}(-n)$ and thus $\hhat{b}^\sgm (n)^*=\hhat{b}^\sgm (-n)$ for any $n$.
By a change of variables $n\mapsto n':=k-n$,
\eqq{&\LR{B_R(\bbar{a},b\osc )\,,\,b\osc}_{H^s}\\
&=i\sum _{(\sgm _1,\sgm )\in \{ \pm \} ^2}\sum _{\mat{n,k\in \Bo{Z}^3\setminus \{ 0\} \\ n_3\neq 0=k_3\\ |n|=|n-k|}}|n|^{2s}\big[ \hhat{a}^{\sgm _1}(k)\cdot (n-k)\big] \big[ \hhat{b}^\sgm (-n)\cdot \hhat{b}^\sgm (k-n)^*\big] \\
&=i\sum _{(\sgm _1,\sgm )\in \{ \pm \} ^2}\sum _{\mat{n',k\in \Bo{Z}^3\setminus \{ 0\} \\ n_3'\neq 0=k_3\\ |k-n'|=|n'|}}|k-n'|^{2s}\big[ \hhat{a}^{\sgm _1}(k)\cdot (-n')\big] \big[ \hhat{b}^\sgm (n'-k)\cdot \hhat{b}^\sgm (n')^*\big] \\
&=-i\sum _{(\sgm _1,\sgm )\in \{ \pm \} ^2}\sum _{\mat{n',k\in \Bo{Z}^3\setminus \{ 0\} \\ n_3'\neq 0=k_3\\ |n'|=|n'-k|}}|n'|^{2s}\big[ \hhat{a}^{\sgm _1}(k)\cdot n'\big] \big[ \hhat{b}^\sgm (n'-k)\cdot \hhat{b}^\sgm (n')^*\big] .}
Since $e^{\sgm _1}(k)\cdot k=0$, we have
\eqq{&2\LR{B_R(\bbar{a},b\osc )\,,\,b\osc}_{H^s}\\
&=-i\sum _{(\sgm _1,\sgm )\in \{ \pm \} ^2}\sum _{\mat{n,k\in \Bo{Z}^3\setminus \{ 0\} \\ n_3\neq 0=k_3\\ |n|=|n-k|}}|n|^{2s}\big[ \hhat{a}^{\sgm _1}(k)\cdot k\big] \big[ \hhat{b}^\sgm (n-k)\cdot \hhat{b}^\sgm (n)^*\big] =0.}

Next, we consider $B_R(b\osc ,\bbar{b})$.
In a similar manner, by a change of variables $n\mapsto n':=k-n$ and $e^{\pm}(k)\cdot k=0$,
\eqq{&\LR{B_R(b\osc ,\bbar{b})\,,\,b\osc}_{H^s}\\
&=i\sum _{(\sgm ,\sgm _2)\in \{ \pm \} ^2}\sum _{\mat{n,k\in \Bo{Z}^3\setminus \{ 0\} \\ n_3=k_3\neq 0\\ |n|=|k|}}|n|^{2s}\big[ \hhat{b}^{\sgm}(k)\cdot (n-k)\big] \big[ \hhat{b}^{\sgm _2}(n-k)\cdot \hhat{b}^\sgm (n)^*\big] \\
&=i\sum _{(\sgm ,\sgm _2)\in \{ \pm \} ^2}\sum _{\mat{n,k\in \Bo{Z}^3\setminus \{ 0\} \\ n_3=k_3\neq 0\\ |n|=|k|}}|k|^s|n|^{s}\big[ \hhat{b}^{\sgm}(k)\cdot (n-k)\big] \big[ \hhat{b}^\sgm (-n)\cdot \hhat{b}^{\sgm _2}(k-n)^*\big] \\
&=i\sum _{(\sgm ,\sgm _2)\in \{ \pm \} ^2}\sum _{\mat{n',k\in \Bo{Z}^3\setminus \{ 0\} \\ n_3'=0\neq k_3\\ |k-n'|=|k|}}|k|^s|n'-k|^{s}\big[ \hhat{b}^{\sgm}(k)\cdot (-n')\big] \big[ \hhat{b}^\sgm (n'-k)\cdot \hhat{b}^{\sgm _2}(n')^*\big] \\
&=-i\sum _{(\sgm ,\sgm _2)\in \{ \pm \} ^2}\sum _{\mat{n',k\in \Bo{Z}^3\setminus \{ 0\} \\ n_3'=0\neq k_3\\ |k-n'|=|k|}}\big[ |k|^s\hhat{b}^{\sgm}(k)\cdot (n'-k)\big] \big[ |n'-k|^{s}\hhat{b}^\sgm (n'-k)\cdot \hhat{b}^{\sgm _2}(n')^*\big] \\
&=-\LR{\bbar{B_R(|\nabla |^sb\osc ,|\nabla |^sb\osc )}\,,\,b}_{L^2}=0,}
where we have applied Lemma~\ref{lem:BR} at the last equality.

Finally, since $b$ is real-valued,
\eqq{&\LR{B_R(a\osc ,b\osc )\,,\,b\osc}_{L^2}\\
&=i\sum _{(\sgm _1,\sgm _2,\sgm _3)\in \{ \pm \} ^3}\sum _{\mat{n,k\in \Bo{Z}^3\setminus \{ 0\} \\ n_3k_3(n_3-k_3)\neq 0}}\big[ \hhat{a}^{\sgm _1}(k)\cdot (n-k)\big] \big[ \hhat{b}^{\sgm _2}(n-k)\cdot \hhat{b}^{\sgm _3}(n)^*\big] \\
&=i\sum _{(\sgm _1,\sgm _2,\sgm _3)\in \{ \pm \} ^3}\sum _{\mat{n,k\in \Bo{Z}^3\setminus \{ 0\} \\ n_3k_3(n_3-k_3)\neq 0}}\big[ \hhat{a}^{\sgm _1}(k)\cdot (n-k)\big] \big[ \hhat{b}^{\sgm _3}(-n)\cdot \hhat{b}^{\sgm _2}(k-n)^*\big] \\
&=-i\sum _{(\sgm _1,\sgm _2',\sgm _3')\in \{ \pm \} ^3}\sum _{\mat{n',k\in \Bo{Z}^3\setminus \{ 0\} \\ n_3'k_3(n_3'-k_3)\neq 0}}\big[ \hhat{a}^{\sgm _1}(k)\cdot n'\big] \big[ \hhat{b}^{\sgm _2'}(n'-k)\cdot \hhat{b}^{\sgm _3'}(n')^*\big] ,}
where we have changed the variables as $(\sgm _2,\sgm _3,n)\mapsto (\sgm _2',\sgm _3',n'):=(\sgm _3,\sgm _2,k-n)$.
Therefore, 
\eqq{&2\LR{B_R(a\osc ,b\osc )\,,\,b\osc}_{L^2}\\
&=-i\sum _{(\sgm _1,\sgm _2,\sgm _3)\in \{ \pm \} ^3}\sum _{\mat{n,k\in \Bo{Z}^3\setminus \{ 0\} \\ n_3k_3(n_3-k_3)\neq 0}}\big[ \hhat{a}^{\sgm _1}(k)\cdot k\big] \big[ \hhat{b}^{\sgm _2}(n-k)\cdot \hhat{b}^{\sgm _3}(n)^*\big] =0,}
as desired.
\end{proof}

\bigskip
\section{A priori estimate and global existence for the limit equation}\label{GWP for the limit equation}

In this section, we shall prove Proposition~\ref{prop:gwp-FL}.
By the local theory, we can solve the limit equation \eqref{limit} in $H^1$ for a short time and the solution immediately becomes smooth.
Therefore, for the global existence, it suffices to derive global a~priori $H^1$ estimate on smooth solutions of \eqref{limit}$=$\eqref{FLbarh}$+$\eqref{FLbar3}$+$\eqref{FLosc}.

Let $\al \in (\frac{3}{4},1]$, $u_0\in H^1(\T ^3)$ be a divergence-free, mean-zero initial vector field and $M:=\tnorm{u_0}{L^2}$, $E:=\tnorm{u_0}{H^1}$.

\subsection{2D horizontal part \eqref{FLbarh}}
Note that this is the usual 2D incompressible Navier-Stokes equations.
However, when $\al$ is strictly less than one, we need to consider equation for the vorticity $\omega =\nabla _h^\perp \bbar{U}^h:=\p _{x_1}\bbar{U}_2-\p _{x_2}\bbar{U}_1$:
\IVP{Fvor}{&\p _t\omega +\nu (-\Delta _h)^\al \omega +(\bbar{U}^h\cdot \nabla ^h)\omega =0,\qquad t>0,\quad x\in \T ^2,\\
&\omega \big| _{t=0}=\nabla _h^\perp \bbar{u_0}^h.}
Note that $\bbar{U}^h$ can be recovered from $\omega$ by the Biot-Savart law $\bbar{U}^h=-(-\Delta _h)^{-1}\nabla ^\perp _h\omega$ and 
\eqq{\tnorm{\omega}{H^{s}}\sim \tnorm{\bbar{U}^h}{H^{s+1}},\qquad s\in \R ,}
whenever $\omega$ is mean-zero.
The standard $L^2$-energy estimate for \eqref{Fvor} yields that
\eqq{\frac{d}{dt}\tnorm{\omega (t)}{L^2}^2+2\nu \tnorm{\omega (t)}{H^\al }^2\le 0,}
or
\eq{apri-Ubarh}{\tnorm{\bbar{U}^h(t)}{H^1}^2+\nu \int _0^t\tnorm{\bbar{U}^h(t')}{H^{1+\al}}^2\,dt'\le C\tnorm{\bbar{u_0}^h}{H^1}^2\le CE^2,\qquad t>0.}

\subsection{2D vertical part \eqref{FLbar3}}\label{subsec:U3}
We begin with the easy $L^2$-energy estimate for \eqref{FLbar3}: 
\eq{apri-Ubar3-0}{\norm{\bbar{U}_3(t)}{L^2}^2+2\nu \int _0^t\norm{\bbar{U}_3(t')}{H^{\al}}^2\,dt'\le \norm{\bbar{u_0}_{,3}}{L^2}^2\le M^2,\qquad t>0.}
For the $H^1$-energy estimate, we see that the 2D Sobolev inequality and interpolation argument yields that
\eqq{\big| \LR{(\bbar{U}^h\cdot \nabla _h)\bbar{U}_3\,,\,\bbar{U}_3}_{H^1}\big| &=\big| \LR{\nabla _h\bbar{U}^h\,,\,\nabla _h\bbar{U}_3\otimes \nabla _h\bbar{U}_3}_{L^2}\big| \\
&\le \tnorm{\nabla _h\bbar{U}^h}{L^2}\tnorm{\nabla _h\bbar{U}_3}{L^4}^2\lec \tnorm{\bbar{U}^{h}}{H^{1}}\tnorm{\bbar{U}_3}{H^{\frac{3}{2}}}^2\\
&\le \tnorm{\bbar{U}^{h}}{H^{1}}\tnorm{\bbar{U}_3}{H^{1+\al}}^{3-2\al}\tnorm{\bbar{U}_3}{H^\al}^{2\al -1}\\
&\le \nu \tnorm{\bbar{U}_3}{H^{1+\al}}^2+C\nu ^{-\frac{3-2\al}{2\al -1}}\tnorm{\bbar{U}^h}{H^{1}}^{\frac{2}{2\al -1}}\tnorm{\bbar{U}_3}{H^\al}^{2}.}
Note that this estimate is available as long as $\frac{3}{2}\ge \al >\frac{1}{2}$.
From this we have
\eqq{\frac{d}{dt}\norm{\bbar{U}_3(t)}{H^1}^2+\nu \norm{\bbar{U}_3(t)}{H^{1+\al}}^2\le C\nu ^{-\frac{3-2\al}{2\al -1}}\norm{\bbar{U}^h(t)}{H^{1}}^{\frac{2}{2\al -1}}\norm{\bbar{U}_3(t)}{H^\al}^{2},\qquad t>0.}
Integrating both sides in $t$ and applying \eqref{apri-Ubarh}, \eqref{apri-Ubar3-0}, we obtain that
\eq{apri-Ubar3}{&\norm{\bbar{U}_3(t)}{H^1}^2+\nu \int _0^t\norm{\bbar{U}_3(t')}{H^{1+\al}}^2\,dt'\\
&\le E^2+C\nu ^{-\frac{3-2\al}{2\al -1}}\Big( \sup _{0<t'<t}\norm{\bbar{U}^h(t')}{H^{1}}^{\frac{2}{2\al -1}}\Big) \int _0^t\norm{\bbar{U}_3(t')}{H^\al}^{2}\,dt'\\
&\le E^2+C\nu ^{-\frac{2}{2\al -1}}E^{\frac{2}{2\al -1}}M^2,\qquad t>0.}

\subsection{Non-trivial resonance part \eqref{FLosc}}

By the $L^2$ energy estimate with Lemma \ref{lem:BR2}, we immediately have
\eq{apri-Uosc1}{\norm{U\osc (t)}{L^2}^2+2\nu \int _0^t\norm{U\osc (t')}{H^\al}^2\,dt'\le \tnorm{u_{0,\mathrm{osc}}}{L^2}^2\le M^2,\qquad t>0.}

The $H^1$ bound will be obtained from the following lemma:
\begin{lem}\label{lem:combi}
For any $\e >0$ there exists $C>0$ such that for any real-valued, divergence-free and mean-zero vector field $a$, we have
\eqq{\big| \LR{B_R(a\osc ,a\osc )\,,\,a\osc}_{H^1}\big| \le C\tnorm{a\osc}{H^1}^2\tnorm{a\osc}{H^{\frac{3}{2}+\e}}.}
\end{lem}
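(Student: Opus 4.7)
The plan is to reduce $\LR{B_R(a\osc ,a\osc )\,,\,a\osc}_{H^1}$ to a commutator-type $L^2$ trilinear form via the cancellations of Lemma~\ref{lem:BR2}, and then estimate the resulting sum by a Cauchy--Schwarz argument whose critical ingredient is a number-theoretic counting bound for the resonance locus. The counting bound is the content of the forthcoming combinatorial Lemma~\ref{lem:c2} and constitutes the main obstacle.

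\textbf{Reduction to a commutator form.} Using $\LR{f\,,\,g}_{H^1}=\LR{f\,,\,g}_{L^2}+\LR{\nabla f\,,\,\nabla g}_{L^2}$ and the $L^2$ cancellation $\LR{B_R(a\osc ,a\osc )\,,\,a\osc}_{L^2}=0$ from Lemma~\ref{lem:BR2}, we have
\[
\LR{B_R(a\osc ,a\osc )\,,\,a\osc}_{H^1}=\LR{\nabla B_R(a\osc ,a\osc )\,,\,\nabla a\osc}_{L^2}=\tsum _{n\in \Bo{Z}^3}|n|^2\hhat{B_R(a\osc ,a\osc )}(n)\cdot \hhat{a}\osc (n)^*.
\]
Splitting $|n|^2=|k|^2+|m|^2+2k\cdot m$ inside the $n=k+m$ sum, the $|k|^2$ and $|m|^2$ contributions equal $\LR{B_R(-\Delta a\osc ,a\osc )\,,\,a\osc}_{L^2}$ and $\LR{B_R(a\osc ,-\Delta a\osc )\,,\,a\osc}_{L^2}$ respectively, both of which vanish by a further application of Lemma~\ref{lem:BR2} (since $-\Delta a\osc$ is still divergence-free and mean-zero). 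The cross term $2k\cdot m=2\tsum _{j=1}^{3}k_jm_j$ yields the commutator identity
\[
\LR{B_R(a\osc ,a\osc )\,,\,a\osc}_{H^1}=-2\tsum _{j=1}^{3}\LR{B_R(\p _ja\osc ,\p _ja\osc )\,,\,a\osc}_{L^2}.
\]

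\textbf{Reduction to a counting estimate.} The right-hand side expanded in Fourier is a sum over resonant triples $(k,m,n)\in (\Bo{Z}^3\setminus \{ 0\} )^3$ with $k+m=n$, $n_3k_3m_3\neq 0$ and $\om ^\sgm _{nkm}=0$ for some $\sgm \in \{ \pm\} ^3$, whose summand is bounded pointwise by $|k||m|\cdot |m|\cdot |\hhat{a}(k)||\hhat{a}(m)||\hhat{a}(n)|$. Set
\[
\cS (n):=\Shugo{k\in \Bo{Z}^3\setminus \{ 0\}}{n-k\neq 0,~n_3k_3(n_3-k_3)\neq 0,~\exists \sgm \in \{ \pm\} ^3:\om ^\sgm _{n,k,n-k}=0}.
\]
A suitable Cauchy--Schwarz argument, applied dyadically in $|k|,|m|,|n|$ so as to place an $H^1$ norm on the two smaller-frequency factors and an $H^{3/2+\e}$ norm on the highest-frequency factor, reduces the lemma to the counting estimate $\# \cS (n)\lec _\e |n|^{1+2\e}$ for each $\e >0$. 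Concretely, applying Cauchy--Schwarz first in $k\in \cS (n)$ and then in $n$ bounds the sum by $C\tnorm{a\osc}{H^1}^2\bigl(\tsum _n|n|^2|\hhat{a}\osc (n)|^2\cdot \#\cS (n)\bigr)^{1/2}$, and the weight $|n|^2\#\cS (n)\lec |n|^{3+2\e}$ is exactly what is absorbed by $\tnorm{a\osc}{H^{3/2+\e}}^2$.

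\textbf{The counting bound (the main obstacle).} This counting bound is precisely the combinatorial Lemma~\ref{lem:c2}, proved in the following section using elementary number theory. Its proof depends essentially on the arithmetic of the regular torus $\T ^3=[0,2\pi )^3$: the squared lengths $|k|^2,|m|^2,|n|^2$ then lie in $\Bo{Z}_{\ge 0}$, and clearing the square roots in $\sgm _1k_3/|k|+\sgm _2m_3/|m|=\sgm _3n_3/|n|$ (by squaring twice) produces a polynomial Diophantine constraint relating $(|k|^2,|m|^2,|n|^2)$ to $(k_3,m_3,n_3)$. For each fixed $n$ and admissible $k_3\in \{ 1,\ldots ,|n_3|-1\}$, this constraint forces $|k|^2$ to be a divisor of an integer polynomial in $|n|$; the classical divisor bound $d(N)\lec _\e N^\e$ then yields $O_\e (|n|^\e )$ admissible values of $|k|^2$. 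A second divisor-type bound (Gauss's $r_2(N)\lec _\e N^\e$, controlling the number of $k^h\in \Bo{Z}^2$ with $|k^h|^2=|k|^2-k_3^2$) contributes another factor of $|n|^\e$, and summing over the at-most-$|n_3|\le |n|$ admissible $k_3$ completes the bound $\#\cS (n)\lec _\e |n|^{1+\e}$. It is precisely the restriction to a regular (or rational) torus that makes the squared lengths integers and allows the divisor bound to be applied; for irrational aspect ratios no analogous savings is available, and this is the sole reason for the torus restriction in our main theorem.
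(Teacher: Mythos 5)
Your reduction step contains a genuine error. The third identity of Lemma~\ref{lem:BR2} reads $\LR{B_R(a\osc ,b\osc )\,,\,b\osc}_{L^2}=0$ and requires the second and third arguments to be the \emph{same} field; it does not apply to $\LR{B_R(a\osc ,-\Delta a\osc )\,,\,a\osc}_{L^2}$ (your $|k|^2$ term is fine, since there only the first argument is modified, but the $|m|^2$ term is not). In fact, the very change of variables used in the proof of Lemma~\ref{lem:BR2} (real-valuedness, $n\mapsto k-n$, swapping $\sgm _2,\sgm _3$, and $\hhat{a}^{\sgm _1}(k)\cdot k=0$) shows $\LR{B_R(a\osc ,-\Delta a\osc )\,,\,a\osc}_{L^2}=-\LR{B_R(a\osc ,a\osc )\,,\,-\Delta a\osc}_{L^2}$, i.e.\ it equals \emph{minus the quantity you are estimating}. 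Declaring it zero is therefore unjustified and false in general: if it vanished identically, then $\LR{B_R(a\osc ,a\osc )\,,\,a\osc}_{H^1}\equiv 0$ and the whole difficulty this lemma addresses (the nontrivial resonances on $\T ^3$, cf.\ $N_{ir}(1,1)=\I$) would disappear. Your commutator identity survives only with constant $-1$ instead of $-2$, and only via this extra symmetrization, not via Lemma~\ref{lem:BR2}. The paper's reduction is different and clean: $\LR{B_R(a\osc ,a\osc )\,,\,a\osc}_{H^1}=\sum _j\LR{B_R(\p _ja\osc ,a\osc )\,,\,\p _ja\osc}_{L^2}$, because $\LR{B_R(a\osc ,\p _ja\osc )\,,\,\p _ja\osc}_{L^2}=0$ \emph{is} a legitimate instance of Lemma~\ref{lem:BR2}.

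The second, more serious problem is that the form you arrive at cannot be closed by your Cauchy--Schwarz/counting scheme once absolute values are taken: its summand is bounded by $|k|\,|m|^2\,|\hhat{a}(k)||\hhat{a}(m)||\hhat{a}(n)|$, i.e.\ derivatives distributed as $(1,2,0)$ with none on the output frequency $n$. In the regime $|k|\sim |m|\sim N\gg |n|\sim N_0$, the counting bound (correctly applied, via the symmetry of the resonance relation, to the lowest frequency) yields at best about $N^3N_0^{(1+\e )/2}\tnorm{a_N}{L^2}^2\tnorm{a_{N_0}}{L^2}$ for these shells, where $a_N$ denotes the dyadic piece of $a\osc$ at frequencies $\sim N$, whereas the target $\tnorm{a\osc}{H^1}^2\tnorm{a\osc}{H^{\frac32 +\e}}$ supplies only about $N^{\frac52 +\e}N_0\tnorm{a_N}{L^2}^2\tnorm{a_{N_0}}{L^2}$; the deficit of order $(N/N_0)^{1/2}$ is unbounded. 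The paper's $(1,1,1)$ reduction is exactly what removes this loss, and the dyadic bookkeeping is Lemma~\ref{lem:RC} with $(\al ,\be ,\ga )=(0,0,\frac12 +\e )$ and $\rho$ slightly above $1$. Relatedly, the counting statement you invoke, $\# \cS (n)\lec _\e |n|^{1+2\e}$ with no restriction on $|k|$, is not Lemma~\ref{lem:c2}: that lemma bounds the number of resonant $k$ with $|k|\le L$ by $CL^{1+\e}$ \emph{uniformly in} $n$, and your set $\cS (n)$ is not even obviously finite; moreover, charging the count to the $n$-factor as a weight $|n|^2\#\cS (n)$ fails precisely when $n$ is the smallest frequency. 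Finally, your sketch of the counting argument (restricting $k_3$ to $\{1,\dots ,|n_3|-1\}$, which is not a valid constraint, and using only divisor bounds) produces losses of size $|n|^{\e}$ rather than $L^{\e}$, hence no bound uniform in $n$; this is why the paper's proof of Lemma~\ref{lem:c2} needs the square-free-part reduction and, for $|n|\gg L^6$, the lattice-point/area argument on the hyperbola, which is the real crux and is absent from your outline.
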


This estimate, which is the most important piece in the proof of our result 
(we will give its proof in the next section),
improves in the case of regular (or rational) periodic domains the previous one proved in \cite[Theorem~3.1]{BMN2}.
The relation to the results of Babin \mbox{et al.} \cite{BMN1,BMN2} will be discussed in detail in the following subsection.

By Lemmas~\ref{lem:BR2} and \ref{lem:combi}, we proceed the $H^1$ energy estimate as
\eqq{\frac{d}{dt}\norm{U\osc (t)}{H^1}^2+2\nu \norm{U\osc (t)}{H^{1+\al}}^2\le C(\e )\norm{U\osc (t)}{H^1}^2\norm{U\osc (t)}{H^{\frac{3}{2}+\e}}.}
Let $\al \in (\frac{3}{4},1]$, and choose $\e >0$ so that $2\al >\frac{3}{2}+\e$.
By interpolation and Young's inequality, 
\eqq{&C(\e )\norm{U\osc}{H^1}^2\norm{U\osc}{H^{\frac{3}{2}+\e}}\le C(\e )\norm{U\osc}{H^{1+\al}}^{\frac{7}{2}-2\al +\e}\norm{U\osc}{L^2}\norm{U\osc}{H^\al}^{2\al -\frac{3}{2}-\e}\\
&\le \nu \norm{U\osc}{H^{1+\al}}^2+C(\e ,\al )\nu ^{-\frac{7-4\al +2\e}{4\al -3-2\e}}\norm{U\osc}{L^2}^{\frac{4}{4\al -3-2\e}}\norm{U\osc}{H^\al}^2,}
and hence,
\eqq{&\frac{d}{dt}\norm{U\osc (t)}{H^1}^2+\nu \norm{U\osc (t)}{H^{1+\al}}^2\\
&\quad \le C(\e ,\al )\nu ^{-\frac{7-4\al +2\e}{4\al -3-2\e}}\norm{U\osc (t)}{L^2}^{\frac{4}{4\al -3-2\e}}\norm{U\osc (t)}{H^\al}^2, \qquad t>0.}
Integrating both sides in $t$ and applying \eqref{apri-Uosc1}, we obtain that
\eq{apri-Uosc2}{&\norm{U\osc (t)}{H^1}^2+\nu \int _0^t\norm{U\osc (t')}{H^{1+\al}}^2\,dt'\\
&\le E^2+C(\e ,\al )\nu ^{-\frac{7-4\al +2\e}{4\al -3-2\e}}\Big( \sup _{0<t'<t}\norm{U\osc (t')}{L^2}^{\frac{4}{4\al -3-2\e}}\Big) \int _0^t\norm{U\osc (t')}{H^\al}^2\,dt'\\
&\le E^2+C(\e ,\al )\nu ^{-\frac{4}{4\al -(3+2\e )}}M^{2+\frac{4}{4\al -(3+2\e )}},\qquad t>0.}

Combining \eqref{apri-Ubarh}, \eqref{apri-Ubar3} and \eqref{apri-Uosc2}, we obtain a global $H^1$-a priori estimate on the solution $U(t)$ of the limit equation \eqref{limit} as
\eq{apri-U}{&\norm{U(t)}{H^1}^2+\nu \int _0^t\norm{U(t')}{H^{1+\al}}^2\,dt'\\
&\hx \le CE^2+C\nu ^{-\frac{2}{2\al -1}}M^2E^{\frac{2}{2\al -1}}+C(\e ,\al )\nu ^{-\frac{4}{4\al -(3+2\e )}}M^{2+\frac{4}{4\al -(3+2\e )}}\\
&\hx \lec _{\al ,\e}E^2\Big\{ 1+\Big( \frac{E}{\nu}\Big) ^{\frac{4}{4\al -(3+2\e )}}\Big\} ,\qquad t>0,}
where $0<\e <2\al -\frac{3}{2}$ and we have used $M\le E$, $0\le \frac{2}{2\al -1}\le \frac{4}{4\al -(3+2\e )}$.
Note that the last line of \eqref{apri-U} is constant in $t$.
This immediately implies that the solution $U(t)$ is bounded in $H^1$.
This is enough to show the existence of global regular solutions $U(t)$ of \eqref{limit}.
Moreover, \eqref{apri-U} also means that the $H^{1+\al}$ norm of $U(t)$ will eventually become arbitrarily small.
By the result of small-data global existence (similar to Theorem~\ref{prop:lwp}), we see that, for any $s\ge 1$, the $H^s$ norm of the solution $U(t)$ decays exponentially after some time.

We have thus established Proposition~\ref{prop:gwp-FL}, up to the proof of the key Lemma~\ref{lem:combi}.

\subsection{Remarks}\label{subsec:BMN}

Lemma~\ref{lem:combi} should be compared to the previous one by Babin \mbox{et al.} (\cite[Theorem~3.1]{BMN2}).
Let us recall some results in \cite{BMN1,BMN2}.

Let $a_2,a_3>0$ be positive numbers and consider the problem on the torus $\mathbf{T}_a^3:=[0,2\pi )\times [0,2\pi a_2)\times [0,2\pi a_3)$.
(We may always assume the period in the $x_1$ direction to be equal to $2\pi$ by rescaling the torus.)
The Fourier series is defined by
\eqq{u(x)=\sum _{n\in \Bo{Z}^3}\hhat{u}(n)e^{i\check{n}\cdot x},\quad \hhat{u}(n):=\frac{1}{(2\pi )^{3}a_2a_3}\int _{\mathbf{T}_a^3}u(x)e^{-i\check{n}\cdot x}\,dx,}
where $\check{n}=(n_1,\frac{n_2}{a_2},\frac{n_3}{a_3})$, and Sobolev spaces $H^s(\mathbf{T}_a^3)$ is defined in a natural way.
The eigenvalues of the matrix $\hhat{\HP}(n)J\hhat{\HP}(n)$ are $\pm i\check{n}_3/|\check{n}|$, and the nontrivial resonance condition can be written as
\eqq{\exists \sgm \in \{ \pm \} ^3;\qquad \sgm _1\frac{k_3}{|\check{k}|}+\sgm _2\frac{m_3}{|\check{m}|}=\sgm _3\frac{n_3}{|\check{n}|}}
with $k_3m_3n_3\neq 0$ and the convolution condition 
\eq{BMN-0}{k+m=n.}
This is equivalent to 
\eqq{0&=\prod _{\sgm _1,\sgm _2\in \{ \pm \}}\Big( \sgm _1\frac{k_3}{|\check{k}|}+\sgm _2\frac{m_3}{|\check{m}|}-\frac{n_3}{|\check{n}|}\Big) \\
&=\Big( \frac{k_3^2}{|\check{k}|^2}+\frac{m_3^2}{|\check{m}|^2}-\frac{n_3^2}{|\check{n}|^2}\Big) ^2-4\frac{k_3^2m_3^2}{|\check{k}|^2|\check{m}|^2},
}
or
\eq{BMN-1}{P(k,m,n; \theta_2, \theta_3 )=0,}
where we set $\th _2:=a_2^{-2}$, $\th _3:=a_3^{-2}$ and
\eqq{P(k,m,n; \theta_2, \theta_3 ):=\big( k_3^2|\check{m}|^2|\check{n}|^2+m_3^2|\check{k}|^2|\check{n}|^2-n_3^2|\check{k}|^2|\check{m}|^2\big) ^2-4k_3^2m_3^2|\check{k}|^2|\check{m}|^2|\check{n}|^4.}
Since
\eqq{|\check{n}|^2=n_1^2+\th _2n_2^2+\th _3n_3^2}
and similarly for $k,m$, $P(k,m,n;\theta_2 ,\theta_3 )$ is a polynomial of degree $4$ in $\th _2,\th _3$ and the coefficient of $\th _3^4$ ($=-3k_3^4m_3^4n_3^4$) does not vanish whenever $k_3m_3n_3\neq 0$.

On the other hand, \eqref{BMN-1} determines algebraic curves $\Gamma (k,m,n)$ in the $(\th _2,\th _3)$-plane parameterized by integer vectors $k,m,n$.
We see that the equation \eqref{BMN-1} (\mbox{i.e.} the curve $\Gamma(k,m,n)$) with the convolution condition \eqref{BMN-0} is invariant under dilations, reflections and permutations:
\eqs{(k,m,n)\mapsto (\la k,\la m,\la n),\qquad \la \in \R \setminus \{ 0\} ;\\
(k,m,n)\mapsto (Rk,Rm,Rn),\quad R\in \Big\{ \text{Id},\,\Big( \mat{-1 &0 &0\\ 0 &1 &0\\ 0 & 0 &1}\Big) ,\,\Big( \mat{1 &0 &0\\ 0 &-1 &0\\ 0 & 0 &1}\Big) ,\,\Big( \mat{-1 &0 &0\\ 0 &-1 &0\\ 0 & 0 &1}\Big) \Big\} ;\\
(k,m,n)\mapsto (S(k),S(m),-S(-n)),\quad \text{$S$: any permutation of $\{ k,m,-n\}$.}}

Hence, if we write $L_n$ to denote the straight line in the Fourier space through the origin and $n$, then the curve $\Gamma (k,m,n)$ depends only on (unordered) triplets $\{ L_k,L_m,L_n\}$ and does not depend on their (simultaneous) reflections.

It was shown in \cite[Section~4]{BMN2} that if we assume $k+m=n$, $k_3m_3n_3\neq 0$ and that the curve $\Gamma (k,m,n)$ intersects with the first quadrant of the $(\th _2,\th _3)$-plane, then the curve is represented as the graph of a function $\th _3=\phi _3(\th _2)$ on the first quadrant, and moreover, 
\begin{itemize}
\item if $(k _3m_2-k_2m_3)(k_1m_3-k_3m_1)(k_1m_2-k_2m_1)=0$, then the curve is reduced to a straight line;
\item otherwise, the curve is irreducible.
In this case, the coincidence of two such curves $\Gamma (k,m,n)=\Gamma (k',m',n')$ implies the coincidence of the sets
\eqq{&\Big\{ \frac{k_1^2}{k_3^2},\frac{m_1^2}{m_3^2},\frac{n_1^2}{n_3^2}\Big\} =\Big\{ \frac{(k'_1)^2}{(k'_3)^2},\frac{(m'_1)^2}{(m'_3)^2},\frac{(n'_1)^2}{(n'_3)^2}\Big\} ,\\
&\Big\{ \frac{k_2^2}{k_3^2},\frac{m_2^2}{m_3^2},\frac{n_2^2}{n_3^2}\Big\} =\Big\{ \frac{(k'_2)^2}{(k'_3)^2},\frac{(m'_2)^2}{(m'_3)^2},\frac{(n'_2)^2}{(n'_3)^2}\Big\} .}
\end{itemize}
Based on these facts, the numbers $N_r,N_{ir}$ were defined for given $\th _2,\th _3$ as follows:
\eqs{N_r(\th _2,\th _3):=\# \Big\{ \Gamma (k,m,n)\,\Big| \,\begin{matrix}k_3m_3n_3\neq 0,\,k+m=n,\, (\th _2,\th _3)\in \Gamma (k,m,n)\\ \text{$\Gamma$: straight line}\end{matrix}\Big\} ,\\
N_{ir}(\th _2,\th _3):=\sup _{L}~\# \Big\{ \Gamma (k,m,n)\,\Big| \,\begin{matrix}k_3m_3n_3\neq 0,\,k+m=n,\, (\th _2,\th _3)\in \Gamma (k,m,n)\\ \text{$\Gamma$: irreducible curve s.t.}\,RL\in \{ L_k,L_m,L_n\} \end{matrix}\Big\} ,
}
where $L$ ranges over all the lines through the origin and $R$ denotes reflection symmetries.

Babin \mbox{et al.} \cite{BMN1,BMN2} studied the global regularity for \eqref{NSC} with $\al =1$ in general periodic domains and made a refined analysis for domains with $N_{ir}(\th _2,\th _3)<\infty$.
Their result on the estimate of non-trivial resonant part can be rewritten with our notations as follows:
\begin{enumerate}
\item $N_r=N_{ir}=0$ holds for almost all $(\th _2,\th _3 )$.
In this case, non-trivial resonances do not occur; namely, $\LR{B_R(a\osc ,a\osc )\,,\,a\osc}_{H^1}\equiv 0$.
One has uniform-in-$\nu ^{-1}$ a priori bound
\eqq{\norm{U\osc (t)}{H^1}^2\le \norm{U\osc (0)}{H^1}^2}
for solutions $U\osc (t)$ of \eqref{FLosc} with $\al =1$, which even implies long time existence under fast rotation for inviscid flow, as shown in \cite{BMN1}.
\item In the case where $N_r=0$ and $0<N_{ir}<\infty$, non-trivial resonances do exist but are finitely many, \mbox{i.e.} ``$0$D like''.
One obtains an a priori bound
\eqq{\norm{U\osc (t)}{H^1}^2\le \norm{U\osc (0)}{H^1}^2+\frac{CN_{ir}}{\nu ^2}\norm{U\osc (0)}{L^2}^4.}
\item In the case where $0<N_r\le \infty$ and $0\le N_{ir}<\infty$, non-trivial resonance is ``$1$D like'', and the a priori bound obtained is 
\eqq{\norm{U\osc (t)}{H^1}^2\le \norm{U\osc (0)}{H^1}^2+\frac{CN_{ir}}{\nu ^2}\norm{U\osc (0)}{L^2}^4+\frac{C}{\nu ^4}\norm{U\osc (0)}{L^2}^6.}
\item In general, including the ``worst case'' of $N_{ir}=\infty$, one has the bound
\eqq{\norm{U\osc (t)}{H^1}^2\le \norm{U\osc (0)}{H^1}^2\exp \Big[ \frac{C}{\nu ^2}\norm{U\osc (0)}{L^2}^2\Big] .}
\end{enumerate}
In (ii) and (iii), one gets polynomial-in-$\nu ^{-1}$ a priori bound similar to \eqref{apri-Uosc2} shown above. In particular, one can also show global regularity for \eqref{NSC} with $\al$ less than $1$ on these domains (optimal range for $\al$ may depend on whether $N_r=0$ or not).
However, in the case (iv) the estimates in \cite{BMN2} are not sufficient to treat the fractional Laplacian with $\al <1$.

The general estimate in \cite[Theorem~3.1]{BMN2} was based on an observation that the total number of non-trivial resonant frequency triplets is ``2D like'', though the interactions are genuinely 3D.
This claim seems rather obvious (and actually was verified by a very elementary argument) because the resonance constraint represented by one nontrivial equality should reduce possibility by at least one dimension.
Now, it is also natural to expect that the non-trivial resonance is in fact much ``rarer'' event, since the resonance relation defines a surface of \emph{nonzero curvature} in frequency space.
As we will see in the proof of Lemma~\ref{lem:combi}, this kind of heuristics can be justified by a combinatorial argument in the case of regular or rational domains.
(Lemma~\ref{lem:combi} says that the non-trivial resonances are actually ``$(1+\e )$D like''.)
Such a combinatorial argument is a standard tool in the study of periodic nonlinear dispersive equations (see \mbox{e.g.} \cite{Bourgain93,CKSTT,K}), while it seems new in the context of equations of rotating fluids in a periodic domain.
We also note that it is not clear whether the ``$(1+\e )$D like'' estimate is optimal or not.

Finally, we claim that the regular (i.e. $\th _2=\th _3=1$) domains considered in this paper are in fact included in the $N_{ir}=\infty$ case.
However, it is hard in general to determine the precise values of $N_r$ and $N_{ir}$ for given $(\th _2,\th _3)$. 

\begin{lem}
$N_{ir}(1,1)=\I$.
\end{lem}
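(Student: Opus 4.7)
The plan is to exhibit an explicit infinite family of resonant triples $(k,m,n)$ passing through $(\theta_2,\theta_3)=(1,1)$ and all sharing a common line among $\{L_k,L_m,L_n\}$. I will take this shared line to be $L$, the line spanned by $(1,0,1)$, and arrange that $n$ lies on $L$ for every triple in the family.

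To construct the family, I first impose the symmetric ansatz
\eqq{
n=(j,0,j),\qquad k=\Big(\tfrac{j+d}{2},\,k_2,\,\tfrac{j-d}{2}\Big),\qquad m=n-k=\Big(\tfrac{j-d}{2},\,-k_2,\,\tfrac{j+d}{2}\Big),
}
with integers $(j,d,k_2)$, $j>0$ and $j\equiv d\pmod 2$. This forces $|k|=|m|$, so the resonance relation at $(1,1)$ with sign choice $(+,+,+)$ collapses to the scalar condition $|k|^2=2j^2$, equivalent to the single Diophantine equation $d^2+2k_2^2=3j^2$. This defines a smooth conic in $\Bo{P}^2_{\Bo{Q}}$ containing the obvious rational point $(1:1:1)$, and the standard secant-line construction then furnishes the one-parameter rational parameterization
\eqq{
(d:k_2:j)=\big(2p^2-4pq-q^2\,:\,q^2-2pq-2p^2\,:\,q^2+2p^2\big),\qquad (p,q)\in\Bo{Z}^2,
}
which, after clearing denominators, produces infinitely many primitive integer solutions. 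A short direct calculation shows $(k_3m_2-k_2m_3)(k_1m_3-k_3m_1)(k_1m_2-k_2m_1)=j^3k_2^2d$ and $k_3m_3n_3=j(j^2-d^2)/4$, so every solution with $k_2\neq 0$ and $d\notin\{0,\pm j\}$ yields a resonant triple that is irreducible and satisfies $k_3m_3n_3\neq 0$; these nondegeneracy conditions exclude only finitely many $(p,q)$.

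The main obstacle is to check that infinitely many of these triples give pairwise \emph{distinct} curves $\Gamma$. Since $\Gamma(k,m,n)$ depends only on the unordered triple $\{L_k,L_m,L_n\}$ modulo the finite reflection group generated by sign changes of the horizontal coordinates, and since in our family $L_n=L$ is fixed while the involution $(k_1,k_2,k_3)\mapsto(k_3,-k_2,k_1)$ converts $L_k$ into $L_m$, the question reduces to whether the lines $L_k=[(j+d):2k_2:(j-d)]\in\Bo{P}^2(\Bo{Q})$ occupy infinitely many orbits of that finite group as $(p,q)$ varies. This is a routine but essential check on the explicit parameterization---any single orbit contains at most finitely many $(p,q)$---and once verified it produces infinitely many distinct irreducible curves through $(1,1)$ with $L$ among their associated lines, so $N_{ir}(1,1)=\I$.
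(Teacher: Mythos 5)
Your construction is a genuine and interesting alternative to the paper's: where the paper fixes $k_2=1$ and reduces to the Pell equation $X^2-3y^2=1$ (producing an explicit recursively-defined sequence), you allow $k_2$ to vary, obtaining the smooth conic $d^2+2k_2^2=3j^2$ and parameterizing all of its rational points by the secant-line method. The algebra checks out: your ansatz does give $k+m=n$, the sign choice $(+,+,+)$ at $(\theta_2,\theta_3)=(1,1)$ does collapse to $|k|^2=2j^2$ hence $d^2+2k_2^2=3j^2$, the parameterization you write down does satisfy the conic identically, and the computations $(k_3m_2-k_2m_3)(k_1m_3-k_3m_1)(k_1m_2-k_2m_1)=k_2^2 j^3 d$ and $k_3m_3n_3=j(j^2-d^2)/4$ are correct, so the nondegeneracy conditions $k_2\ne0$, $d\ne0$, $j\ne\pm d$ are the right ones.

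However, there is a real gap at what you yourself call ``the main obstacle'': you never actually verify that the constructed triples yield infinitely many pairwise distinct irreducible curves $\Gamma(k_j,m_j,n_j)$. You only remark that the check is ``routine'' and phrase the conclusion conditionally (``once verified\ldots''). But this is precisely the content of the lemma --- without it one has only produced infinitely many resonant triples, not infinitely many curves. The paper's criterion for distinctness of irreducible curves is the coincidence of the two sets $\big\{\tfrac{k_1^2}{k_3^2},\tfrac{m_1^2}{m_3^2},\tfrac{n_1^2}{n_3^2}\big\}$ and $\big\{\tfrac{k_2^2}{k_3^2},\tfrac{m_2^2}{m_3^2},\tfrac{n_2^2}{n_3^2}\big\}$, and in your family the first set is $\big\{\tfrac{(j+d)^2}{(j-d)^2},\tfrac{(j-d)^2}{(j+d)^2},1\big\}$, so it suffices to show $\tfrac{j+d}{j-d}$ takes infinitely many absolute values. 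From your parameterization one computes $j+d=4p(p-q)$ and $j-d=2q(q+2p)$, so
\[
\frac{j+d}{j-d}=\frac{2p(p-q)}{q(q+2p)},
\]
a nonconstant rational function on $\mathbb{P}^1$, and distinctness follows. This is the step that must be in the proof; the phrase ``any single orbit contains at most finitely many $(p,q)$'' is also imprecise as stated, since the whole ray $\{(\lambda p,\lambda q)\}$ gives the same projective point and the same curve --- what finiteness of orbits in $\mathbb{P}^1(\mathbb{Q})$ under a finite group buys you is exactly this kind of computation, and it should be spelled out. With that addition the argument is a valid (and more systematic) alternative to the paper's Pell-equation construction.
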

\begin{proof}
We will prove it by constructing infinitely many triplets $\{ (k_j,m_j,n_j)\} _{j\ge 1}$ satisfying $k_{j,3}m_{j,3}n_{j,3}\neq 0$, $k_j+m_j=n_j$, $n_j\in L_{(1,0,1)}$ and generating mutually different irreducible curves $\Gamma (k_j,m_j,n_j)$ in the $(\th _2,\th _3)$-plane passing through $(1,1)$.

Let us look for a triplet $(k,m,n)$ of the form
\eqq{k=(x,1,y),\quad m=(y,-1,x),\quad n=k+m=(x+y,0,x+y);\quad x,y\in \Bo{Z}.}
To ensure that $(1,1)\in \Gamma (k,m,n)$, we impose the following condition:
\eqq{\frac{k_3}{|k|}+\frac{m_3}{|m|}=\frac{n_3}{|n|}~~\text{or}~~-\frac{n_3}{|n|},\qquad \text{i.e.,}\qquad \frac{|x+y|}{\sqrt{x^2+y^2+1}}=\frac{1}{\sqrt{2}}.}
This is equivalent to
\eq{eq:Pell0}{x^2+4xy+y^2=1.}
Setting $X:=x+2y$, we get 
\eq{eq:Pell}{X^2-3y^2=1.}
This is one of Pell's equations known to have infinitely many integer solutions.
In fact, $(X,y)=(X_1,y_1):=(2,1)$ is a solution, and by the theory of Pell's equation, 
\eqq{(X_j,y_j)\in \Bo{N}^2\quad \text{defined by}\quad X_j+y_j\sqrt{3}=(X_1+y_1\sqrt{3})^j,\quad j=1,2,3,\dots}
are all solutions of \eqref{eq:Pell}.
Since
\eqq{X_{j+1}+y_{j+1}\sqrt{3}=(X_j+y_j\sqrt{3})(2+\sqrt{3})=(2X_j+3y_j)+(X_j+2y_j)\sqrt{3},}
the corresponding solutions $(x_j,y_j)$ of \eqref{eq:Pell0} with $X_j=x_j+2y_j$ satisfies the recurrence relations
\eqq{&y_{j+1}=X_j+2y_j=x_j+4y_j,\\
&x_{j+1}=X_{j+1}-2y_{j+1}=(2X_j+3y_j)-2(X_j+2y_j)=-y_j,}
with $(x_1,y_1)=(0,1)$.
Then, $(\ti{x}_j,\ti{y}_j):=(-1)^{j-1}(x_j,y_j)$ is also a solution of \eqref{eq:Pell0} satisfying
\eqq{\ti{x}_{j+1}=\ti{y}_j,\quad \ti{y}_{j+1}=-\ti{x}_j-4\ti{y}_j=-\ti{y}_{j-1}-4\ti{y}_j;\qquad (\ti{x}_1,\ti{y}_1)=(0,1).}
Therefore, we have $(\ti{x}_j,\ti{y}_j)=(a_{j-1},a_j)$ with the sequence $\{ a_j\} _{j\ge 0}$ defined by
\eq{def:aj}{a_0=0,\quad a_1=1,\quad a_{j+2}+4a_{j+1}+a_j=0\quad (j\ge 0).}

So far, we have obtained a sequence of triplets $\{ (k_j,m_j,n_j)\} _{j\ge 1}$,
\eqq{k_j=(a_j,1,a_{j+1}),\quad m_j=(a_{j+1},-1,a_j),\quad n_j=(a_j+a_{j+1},0,a_j+a_{j+1}),}
for which $k_j+m_j=n_j$, $n_j\in L_{(1,0,1)}$, and by the above construction of $\{ a_j\}$, the curve $\Gamma (k_j,m_j,n_j)$ passes through $(1,1)$.
(Given the sequence $\{ a_j\}$ defined by \eqref{def:aj}, one can also show directly without using the theory of Pell's equation that $(x,y)=(a_j,a_{j+1})$ satisfies \eqref{eq:Pell0}, and hence $(1,1)\in \Gamma (k_j,m_j,n_j)$, by an induction on $j$.)

It remains to check $k_{j,3}m_{j,3}n_{j,3}\neq 0$ and that $\{ \Gamma (k_j,m_j,n_j)\} _{j\ge 1}$ are mutually different irreducible curves.
By \eqref{def:aj}, we see that $|a_{j+1}|\ge 3|a_j|+1$ for any $j\ge 0$; in fact,
\eqq{|a_{j+1}|-3|a_{j}|&=|4a_{j}+a_{j-1}|-3|a_{j}|\ge (|a_{j}|-3|a_{j-1}|)+2|a_{j-1}|\\
&\ge |a_{j}|-3|a_{j-1}|\ge ~\cdots ~\ge |a_1|-3|a_0|=1.}
In particular, it holds that
\eqq{|a_j|\neq |a_{j'}|\quad (j\neq j');\qquad a_j\neq 0\quad (j\ge 1).}
This ensures that $k_{j,3}m_{j,3}n_{j,3}\neq 0$. 
Moreover, we have
\eqq{&(k _{j,3}m_{j,2}-k_{j,2}m_{j,3})(k_{j,1}m_{j,3}-k_{j,3}m_{j,1})(k_{j,1}m_{j,2}-k_{j,2}m_{j,1})\\
&=(a_j+a_{j+1})^3(a_j-a_{j+1})\neq 0,}
which shows that $\Gamma (k_j,m_j,n_j)$ is irreducible.
Finally, the sets
\eqq{\Big\{ \frac{k_{j,2}^2}{k_{j,3}^2},\,\frac{m_{j,2}^2}{m_{j,3}^2},\,\frac{n_{j,2}^2}{n_{j,3}^2}\Big\} =\{ a_{j+1}^{-2},\,a_j^{-2},\,0\} ,\qquad j\ge 1}
are mutually different, and so are the curves $\Gamma (k_j,m_j,n_j)$.
\end{proof}


\bigskip
\section{The key estimate on the non-trivial resonant part}\label{non-trivial resonant part}

Here, we shall give a proof of Lemma~\ref{lem:combi}.
Recall $\omega_{nkm}^\sigma=\sigma_1\frac{k_1}{|k|}+\sigma_2\frac{m_3}{|m|}+\sigma_3\frac{n_3}{|n|}$.
Define the set of non-trivial resonant frequencies $K^*\subset (\mathbb{Z}^3)^3$ as 
\[ K^*:=\big\{ (n,k,m)\in (\mathbb{Z}^3)^3\,\big| \,k+m=n,\,k_3m_3n_3\neq 0,\,\omega ^\sigma _{nkm}=0~\text{for some $\sigma \in \{ \pm \} ^3$}\big\} .\]
We also use the notation $\mathbb{Z}^3_*:=\Shugo{n\in \mathbb{Z}^3}{n_3\neq 0}$.

The following lemma is crucial in the proof:
\begin{lem}\label{lem:c2}
Let $L\ge 1$.
Then, for any $\varepsilon >0$ we have
\begin{equation*}
\sup _{n\in \mathbb{Z}_*^3}\# \big\{ (k,m)\in (\mathbb{Z}^3_*)^2 \,\big| \,(n,k,m)\in K^*,\,|k|\le L \big\} \le CL^{1+\varepsilon},
\end{equation*}
where the constant $C>0$ depends only on $\varepsilon$.
\end{lem}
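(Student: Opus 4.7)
My plan is to reduce the resonance condition to a Diophantine equation that factors as a product of two integer linear forms equalling an integer, so that the classical divisor bound $d(N) \leq C_\varepsilon N^\varepsilon$ can be applied. The first ingredient will be an arithmetic observation I extract by algebraic manipulation. Setting $u = |k|^2$, $v = |m|^2$, $w = |n|^2$, rationalising and squaring the resonance $\sigma_1 k_3/|k| + \sigma_2 m_3/|m| = \sigma_3 n_3/|n|$ once will give an identity of the shape
\[
v(n_3^2 u + k_3^2 w) - m_3^2 u w = \pm 2 v \,|n_3 k_3|\sqrt{uw}.
\]
The left side is an integer and the right side is a rational multiple of $\sqrt{uw}$; since $k_3 m_3 n_3 \neq 0$ and $v \geq 1$, both sides can balance only if $\sqrt{uw} \in \mathbb{Q}$, i.e.\ $|k|^2|n|^2$ is a perfect square. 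Writing $|n|^2 = d s^2$ with $d$ squarefree, this forces $|k|^2 = d t^2$ and (by the symmetric argument for the pair $m,n$) $|m|^2 = d t'^2$ for positive integers $s,t,t'$.

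Plugging these representations back into the resonance, and collecting signs into $(\epsilon_1,\epsilon_2) = (\sigma_1\sigma_3,\sigma_2\sigma_3) \in \{\pm 1\}^2$, the condition becomes $n_3 t t' = \epsilon_1 k_3 s t' + \epsilon_2 m_3 s t$. Multiplying through by $n_3$ and completing the product will yield the key factorisation
\[
(n_3 t - \epsilon_1 k_3 s)(n_3 t' - \epsilon_2 m_3 s) = \epsilon_1 \epsilon_2 k_3 m_3 s^2.
\]
For each fixed $k_3$ (with $0 < |k_3| \leq L$ and $m_3 = n_3 - k_3 \neq 0$), each admissible pair $(t,t')$ then corresponds to a factorisation of the nonzero integer $\pm s^2 k_3 m_3$ into two factors; summing over the finitely many sign choices, the divisor bound yields at most $O_\varepsilon((s^2 |k_3 m_3|)^\varepsilon)$ such pairs. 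Given $(k_3, t, t')$, the horizontal components $(k_1,k_2) \in \mathbb{Z}^2$ are then pinned down by $k_1^2 + k_2^2 = d t^2 - k_3^2$ together with $(n_1-k_1)^2 + (n_2-k_2)^2 = d t'^2 - m_3^2$; subtracting these reduces to intersecting a circle with a line, producing at most $2$ points.

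Summing over $k_3 \in [-L,L]$, the total count will be bounded by $O_\varepsilon\bigl(\sum_{|k_3| \leq L} d(s^2 |k_3|\,|n_3 - k_3|)\bigr)$, which I would estimate by the multiplicativity $d(ABC) \leq d(A)d(B)d(C)$ and the classical divisor correlation $\sum_{|k_3| \leq L} d(|k_3|)\, d(|n_3 - k_3|) \lesssim L \log^{O(1)} L$, leading to $C_\varepsilon L^{1+\varepsilon}$. The main obstacle I anticipate will be making this bound genuinely uniform in $n$: the factor $d(s^2)$ can in principle grow with $|n|$, so either a case split ($|n| \leq L^K$ versus $|n| > L^K$) or a sharper exploitation of the factorisation in the large-$|n|$ regime will be needed to kill the $n$-dependence. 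In any event, the decisive conceptual step is the first one --- that separating rational from irrational parts in the squared resonance forces $|k|\,|n|$ to be an integer, which collapses the a priori $O(L^2)$ count on shells $|k|^2 = $ const down to an $L^{1+\varepsilon}$-sized structured set on which the divisor bound is effective.
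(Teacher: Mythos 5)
Your first two steps coincide with the paper's own reduction: separating the rational and irrational parts of the squared resonance forces $|n|,|k|,|m|$ into the same square-free class, $|n|=s\sqrt d$, $|k|=t\sqrt d$, $|m|=t'\sqrt d$, and the resonance then factors as $(n_3t-\epsilon_1k_3s)(n_3t'-\epsilon_2m_3s)=\epsilon_1\epsilon_2k_3m_3s^2$. The genuine gap is exactly the obstacle you flag at the end and do not resolve: applying the divisor bound to the right-hand side gives $O_\varepsilon\big((s^2|k_3m_3|)^\varepsilon\big)$ admissible pairs, and both $s\sim|n|/\sqrt d$ and $|m_3|=|n_3-k_3|$ are uncontrolled in terms of $L$, so this is not $O(L^\varepsilon)$ uniformly in $n$ --- and the lemma is precisely a statement about $\sup_n$. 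The remedies you mention do not close this: the divisor-correlation sum $\sum_{|k_3|\le L}d(|k_3|)\,d(|n_3-k_3|)$ has the same non-uniformity in $n_3$, and ``a case split $|n|\le L^K$ vs.\ $|n|>L^K$'' is only half an argument until you say what replaces the divisor bound in the large-$|n|$ regime. That regime is where the paper does its real work: for $|n|\gg L^6$ it first rules out $|n_3|\lesssim|n|^{1/2}$ outright (since then $1/L\le|k_3|/|k|\le|n_3|/|n|+|m_3|/|m|\lesssim|n|^{-1/2}$, a contradiction), and for $|n_3|\gg|n|^{1/2}$ it abandons the divisor bound entirely and argues geometrically: the admissible $(t,t')$ are lattice points on one branch of the hyperbola $(x-k_3s/n_3)(y-(n_3-k_3)s/n_3)=k_3(n_3-k_3)s^2/n_3^2$, confined to a box of side $\sim L/\sqrt d$, and since the hyperbola parameter is $\gtrsim|n|/d\gg(L/\sqrt d)^2$, any three such points would span a non-degenerate lattice triangle of area $\lesssim L^3/|n|^{1/2}\ll1$, contradicting the minimal area $\tfrac12$ of a lattice triangle; hence at most four choices of $t$, uniformly in $n$. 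Without an argument of this kind your count cannot be made uniform in $n$, so the proposal as it stands does not prove the lemma; with it (and with the divisor bound reserved for the regime $|n|\lesssim L^6$, where $s$ and $|m_3|$ are polynomially bounded by $L$), it becomes the paper's proof.

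A minor further point: your claim that $(k_1,k_2)$ is pinned down up to two choices by intersecting a circle with a line fails when $n^h=(n_1,n_2)=0$, since the two circles are then concentric and their difference is not a line; there you must fall back on the bound $\#\{(x,y)\in\mathbb Z^2:x^2+y^2=N\}\le C_\varepsilon N^\varepsilon$ (which the paper uses in all cases), costing only another harmless factor $L^{O(\varepsilon)}$.
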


\begin{rem}
In \cite{BMN2, CDGG}, they used the following estimate instead of the above:
\begin{equation*}
\sup _{n\in \mathbb{Z}_*^3}\# \big\{ (k,m)\in (\mathbb{Z}^3_*)^2 \,\big| \,(n,k,m)\in K^*,\,|k|\le L \big\} \le CL^2.
\end{equation*}
This estimate follows easily from the fact that the resonant constraint $\om ^\sgm _{nk(n-k)}=0$ determines an algebraic equation in $k_3$ of order $8$ for each fixed $n$ and $(k_1,k_2)$.
In particular, this estimate requires no combinatorial argument, and hence it holds for any periodic domains. 
On the other hand, the following proof of Lemma~\ref{lem:c2} is available only for regular (or rational) domains.
\end{rem}

\begin{proof}[Proof of Lemma~\ref{lem:c2}]
We rely on the well-known lemma in elementary number theory:
\begin{lem}[divisor bound, \mbox{cf.} Theorems 278 and 315 in \cite{HW}]\label{lem:c1}
For any $\varepsilon >0$ there exists $C>0$ such that the following estimates hold for any positive integer $N$.
\begin{enumerate}
\item $\# \{ \text{\emph{divisors of} }N \} \le CN^\varepsilon$.
\item $\# \{ (x,y)\in \mathbb{Z}^2 \,|\, x^2+y^2=N \} \le CN^\varepsilon$.
\end{enumerate}
\end{lem}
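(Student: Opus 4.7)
The plan is to prove part (i) via the multiplicativity of the divisor function and then deduce part (ii) from (i) using Jacobi's two-square formula. Both statements are entirely classical, so the main effort is to organize the standard arguments cleanly.

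For (i), I would start by writing $N=\prod_i p_i^{a_i}$ in its prime factorization, so that $d(N)=\prod_i(a_i+1)$ by multiplicativity. The key pointwise estimate is that for each prime $p$ there is a constant $C_{p,\varepsilon}$ such that $a+1\le C_{p,\varepsilon}\,p^{a\varepsilon}$ for every integer $a\ge 0$. Indeed, once $p^\varepsilon\ge 2$ (i.e.\ $p\ge 2^{1/\varepsilon}$), one has $a+1\le 2^a\le p^{a\varepsilon}$, so $C_{p,\varepsilon}=1$ suffices. For the finitely many small primes $p<2^{1/\varepsilon}$, the ratio $(a+1)/p^{a\varepsilon}$ attains a finite maximum over $a\ge 0$ (because $p^\varepsilon>1$ still forces the ratio to $0$ as $a\to\infty$). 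Taking the product over all primes dividing $N$ and absorbing the finitely many small-prime factors into a single constant depending only on $\varepsilon$ yields $d(N)\le C_\varepsilon N^\varepsilon$.

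For (ii), I would invoke Jacobi's classical identity $r_2(N)=4\sum_{d\mid N}\chi_{-4}(d)$, where $\chi_{-4}$ is the nontrivial Dirichlet character modulo $4$ and $r_2(N)$ counts representations $(x,y)\in\mathbb{Z}^2$ with $x^2+y^2=N$. Since $|\chi_{-4}(d)|\le 1$, the triangle inequality immediately gives $r_2(N)\le 4\,d(N)$, and part (i) finishes the bound. Alternatively, one can argue directly via unique factorization in the Gaussian integers $\mathbb{Z}[i]$: each $(x,y)$ with $x^2+y^2=N$ corresponds to a factorization $N=z\bar z$ in $\mathbb{Z}[i]$, and such factorizations are controlled by $d(N)$ up to a bounded multiplicative factor coming from the units $\{\pm 1,\pm i\}$. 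The only mild obstacle in the whole argument is the clean absorption of the small-prime contributions in (i) into a single $\varepsilon$-dependent constant; everything else is bookkeeping, and full proofs appear e.g.\ in Hardy and Wright, \emph{An Introduction to the Theory of Numbers}, as the paper cites.
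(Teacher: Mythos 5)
Your proof is correct and follows the standard classical argument. The paper itself does not prove this lemma but cites it to Hardy and Wright (Theorems 278 and 315), and your derivation — multiplicativity of $d(N)$ together with the large-prime/small-prime split for part (i), and Jacobi's two-square formula $r_2(N)=4\sum_{d\mid N}\chi_{-4}(d)$ reducing part (ii) to part (i) — is essentially the argument given there.
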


We focus on the case $\sigma =(+,+,+)$; a similar proof applies for other cases.

For given $n,k,m\in \mathbb{Z}_*^3$, positive integers $\nu ,\kappa ,\mu ,d_n,d_k,d_m$ are uniquely determined so that
\[ |n|=\nu \sqrt{d_n},\quad  |k|=\kappa \sqrt{d_k},\quad  |m|=\mu \sqrt{d_m},\quad \text{$d_n,d_k,d_m$ : square-free.}\]
We first see that $d_n=d_k=d_m$ if $\omega ^\sigma _{nkm}=0$.
In fact, we have
\[ \frac{n_3^2}{|n|^2}-\frac{2n_3k_3}{|n||k|}+\frac{k_3^2}{|k|^2}=\frac{m_3^2}{|m|^2},\]
hence $|n||k|=\nu \kappa \sqrt{d_nd_k}$ must be in $\mathbb{Q}$, which means $d_n=d_k$ since both $d_n$ and $d_k$ are square-free.
Similarly we have $d_n=d_m$.
Therefore, we may write uniquely as
\[  |n|=\nu \sqrt{d},\quad |k|=\kappa \sqrt{d},\quad |m|=\mu \sqrt{d},\quad \text{$d$ : square-free.}\]

Now, we fix $n\in \mathbb{Z}_*^3$ and count the number of $k\in \mathbb{Z}_*^3$ such that $n_3\neq k_3$, $\omega _{nk(n-k)}^\sigma =0$ and $|k|\le L$.
(Note that $\nu$, $d$ are determined once $n$ is fixed.)
First, there are at most $2L$ choices for $k_3$, since $|k|\le L$.

Next we fix $k_3$, so that $n_3-k_3$ is also fixed.
We shall prove that there are at most $O(L^\varepsilon )$ choices for $\kappa$.
Before proving it, we note that there are at most $O(L^{2\varepsilon} )$ choices for $(k_1,k_2)$ after fixing $\kappa$, because $k_1^2+k_2^2=|k|^2-k_3^2=\kappa ^2d-k_3^2=:N$ is a fixed positive integer and we can apply Lemma~\ref{lem:c1} (ii), noticing $N\le |k|^2\le L^2$.
These estimates imply the desired bound on the number of $k$'s.
More precisely, we just multiply all possibilities; $O(L)$ for $k_3$, $O(L^\e )$ 
for $\kappa$ and $O(L^{2\e})$ for $(k_1,k_2)$.

Now we estimate the total number of possible $\kappa$'s for fixed $n$ and $k_3$, considering the following three cases separately.

(I) $|n|\lesssim L^6$: We see that
\begin{align*}
\omega ^\sigma _{nk(n-k)}=0\quad &\Longleftrightarrow \quad \frac{k_3}{\kappa}+\frac{n_3-k_3}{\mu}=\frac{n_3}{\nu}\\
&\Longleftrightarrow \quad \big( n_3\kappa -k_3\nu \big) \big( n_3\mu -(n_3-k_3)\nu \big) =k_3(n_3-k_3)\nu ^2.
\end{align*}
Therefore, $n_3\kappa -k_3\nu \in \mathbb{Z}$ divides the fixed integer $k_3(n_3-k_3)\nu ^2$ of size $O(L^{1+6+6\cdot 2})$.
By Lemma~\ref{lem:c1} (i), there are at most $O(L^\varepsilon )$ choices for $n_3\kappa -k_3\nu \in \mathbb{Z}$.
This implies that there are at most $O(L^\e )$ possibilities for $\kappa$, because $n_3,k_3,\nu$ are all already determined.

(II) $|n|\gg L^6$, $|n_3|\lesssim |n|^{1/2}$: We show that this case does not occur.
In fact, it holds that $|n-k|\sim |n|$ and $|k|\le L\ll |n|^{1/2}$ in this case.
We have
\[ \frac{1}{L}\le \frac{1}{|k|}\le \left| \frac{k_3}{|k|}\right| \le \left| \frac{n_3}{|n|}\right| +\left| \frac{n_3-k_3}{|n-k|}\right| \lesssim \frac{|n|^{1/2}}{|n|}=\frac{1}{|n|^{1/2}},\]
which is not consistent with $|n|\gg L^6$.

(III) $|n|\gg L^6$, $|n_3|\gg |n|^{1/2}$: In this case we show that there are at most four choices for $\kappa$'s.
Suppose for contradiction that there are five possibilities for $\kappa$.
Since $(\kappa ,\mu )\in \mathbb{N}^2$ satisfies 
\[ \left( \kappa -\frac{k_3\nu}{n_3} \right) \left( \mu -\frac{(n_3-k_3)\nu}{n_3} \right) =\frac{k_3(n_3-k_3)\nu ^2}{n_3^2},\]
at least three different (non-collinear) points $P_j:=(\kappa _j,\mu _j)\in \mathbb{Z}^2$ ($j=1,2,3$) are on the same component of the fixed hyperbola (in this order):
\eqs{\{ (x,y)\in \mathbb{R}^2 : (x-a)(y-b)=M\} ,\\
a=\frac{k_3\nu}{n_3},\quad b=\frac{(n_3-k_3)\nu}{n_3},\quad M=\frac{k_3(n_3-k_3)\nu ^2}{n_3^2}.}

Now, an elementary calculation shows: For fixed $a,b, M\in \mathbb{R}$, the area $S$ of the region surrounded by (one component of) hyperbola $(x-a)(y-b)=M$ and a chord of length $\la$ is at most 
\begin{equation}\label{S}
S=O(\la ^3/\sqrt{|M|}),\quad\text{whenever}\quad \sqrt{|M|}\gg \la .
\end{equation}
To prove this, we may assume that $a=b=0$ and $M>0$ without loss of generality. 
Let $x_0>0$ and define $S(x_0)$ as the area of the region surrounded by the hyperbola and the segment between two points $(x_0,\frac{M}{x_0})$, $(x_0+\la ,\frac{M}{x_0+\la})$. 
By symmetry of hyperbolic curves on diagonal lines, $S$ is bounded by $\sup _{x_0\ge \sqrt{M}-\la}S(x_0)$ if $\sqrt{M}>\la$.
Now, for $x_0\ge \sqrt{M}-\la$, noticing that $\eta:=\frac{\la}{x_0}\ll 1$ if $\sqrt{M}\gg \la$, we have
\begin{eqnarray*}
S(x_0)&=&
\frac{\la}{2}\left(\frac{M}{x_0}+\frac{M}{x_0+\la}\right) -\int_{x_0}^{x_0+\la}\frac{M}{x}dx\\
&=&
M\left[\frac{\eta}{2}+\frac{\eta}{2(1+\eta)}-\log(1+\eta) \right]
\\
&=&
M\left[\frac{\eta}{2}+\frac{\eta}{2}(1-\eta+\eta^2+O(\eta^3))-(\eta-\frac{\eta^2}{2}+\frac{\eta^3}{3}+O(\eta^4 )) \right]\\
&=&
M(\frac{\eta^3}{6}+O(\eta^4)).
\end{eqnarray*}
By the above estimate, we have \eqref{S}.

In our case, $(\kappa ,\mu )$ is already confined to $[0,L/\sqrt{d}]\times [\nu -L/\sqrt{d},\nu +L/\sqrt{d}]$, so the length of the segment $P_1P_3$ is at most $\sqrt{5}L/\sqrt{d}$.
Since $|n_3|\gg L\ge |k_3|$ and $|n|\gg L^2$, we have
\[ \left| \frac{k_3(n_3-k_3)\nu ^2}{n_3^2}\right| =|k_3|\left| \frac{n_3-k_3}{n_3}\right| \left| \frac{\nu ^2}{n_3}\right| \gtrsim \frac{\nu ^2}{|n_3|}=\frac{|n|}{|n_3|}\frac{|n|}{d}\ge \frac{|n|}{d}\gg \left( \frac{L}{\sqrt{d}}\right) ^2.\]
Hence, we can apply \eqref{S} with $M\gec |n|/d$ and $\la \lec L/\sqrt{d}$ to show that the area of the triangle $P_1P_2P_3$ is bounded by 
\[ C\left( \frac{L}{\sqrt{d}}\right) ^3\left( \frac{d}{|n|}\right) ^{1/2}\lesssim \frac{L^3}{|n|^{1/2}}\ll 1, \]
where we have used the assumption $|n|\gg L^6$.
This is a contradiction, because the area of a non-degenerate lattice triangle is bounded from below by $\frac{1}{2}$.
Therefore, the case (III) has been proved.

This completes the proof of Lemma~\ref{lem:c2}.
\end{proof}

To show Lemma~\ref{lem:combi}, we use the following Sobolev estimate.

\begin{lem}\label{lem:RC}
Let $d\ge 1$ and $\rho \in [0,d]$.
Assume that a set $\La \subset \Shugo{(n,k,m)\in (\Bo{Z}^d)^3}{n+k+m=0}$ satisfies the following conditions:
\begin{itemize}
\item Symmetry: $(n,k,m)\in \La$ implies $(k,n,m),(n,m,k)\in \La$.
\item Dimension: There exists $C>0$ such that for any $L\ge 1$, 

$\sup\limits _{n\in \Bo{Z}^d}\# \Shugo{k\in \Bo{Z}^d}{(n,k,-n-k)\in \La,\,|k|\le L}\le CL^\rho$.
\end{itemize}
Let $\alpha ,\beta ,\gamma \in \mathbb{R}$ satisfy one of the following:
\begin{enumerate}
\item $\alpha +\beta +\gamma \ge \max \{ \alpha ,\beta ,\gamma \}$ and $\alpha +\beta +\gamma >\frac{\rho}{2}$;
\item $\alpha +\beta +\gamma > \max \{ \alpha ,\beta ,\gamma \}$ and $\alpha +\beta +\gamma = \frac{\rho}{2}$.
\end{enumerate}
Then, the following estimate holds.
\begin{equation*}
\Big| \sum _{(n,k,m)\in \La}
\widehat{f}(k)\widehat{g}(m)\widehat{h}(n)\Big| \lesssim \| f\| _{H^\alpha}\| g\| _{H^\beta}\| h\| _{H^\gamma}.
\end{equation*}
\end{lem}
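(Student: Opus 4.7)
My plan is to normalize the Sobolev weights, apply Cauchy--Schwarz twice, and then estimate the residual weighted counting sum via dyadic decomposition using the dimensional hypothesis on $\Lambda$.

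First I would set $F(k):=\langle k\rangle^\alpha|\hat f(k)|$, $G(m):=\langle m\rangle^\beta|\hat g(m)|$, $H(n):=\langle n\rangle^\gamma|\hat h(n)|$, reducing the claim to
\[\sum_{(n,k,m)\in\Lambda}F(k)G(m)H(n)\langle k\rangle^{-\alpha}\langle m\rangle^{-\beta}\langle n\rangle^{-\gamma}\lesssim\|F\|_{\ell^2}\|G\|_{\ell^2}\|H\|_{\ell^2}.\]
The assumed symmetry makes $\Lambda$ invariant under all permutations of $(n,k,m)$, so the sum splits into three pieces according to which of $|n|,|k|,|m|$ is maximal; each piece is handled identically with a corresponding permutation of $(\alpha,\beta,\gamma)$, and since the hypothesis is symmetric under such permutations, it suffices to treat the piece on $\{|n|\ge\max(|k|,|m|)\}$. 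On this piece I would apply Cauchy--Schwarz first in $k$ for fixed $n$ and then in $n$: the factors $F(k)G(-n-k)$ surviving the first step combine with the outer $\ell^2_n$ sum to yield $\|F\|_{\ell^2}\|G\|_{\ell^2}\|H\|_{\ell^2}$, so everything reduces to the uniform bound
\[T:=\sup_{n\in\mathbb{Z}^d}\langle n\rangle^{-2\gamma}\sum_{\substack{k\in A_n\\ |k|\le|n|}}\langle k\rangle^{-2\alpha}\langle n+k\rangle^{-2\beta}<\infty,\qquad A_n:=\{k\in\mathbb{Z}^d:(n,k,-n-k)\in\Lambda\}.\]

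To bound $T$, I would further split the sum according to whether $|n+k|\ge|n|/2$ or $|k|\ge|n|/2$, which exhausts the range since $|k|+|n+k|\ge|n|$. In the first sub-case, combined with $|n+k|\le|n|$ this gives $\langle n+k\rangle^{-2\beta}\sim\langle n\rangle^{-2\beta}$ regardless of the sign of $\beta$, leaving the residual sum $\sum_{k\in A_n,\,|k|\le|n|}\langle k\rangle^{-2\alpha}$. Dyadic decomposition $|k|\sim 2^j$ together with the dimensional bound $\#\{k\in A_n:|k|\sim 2^j\}\lesssim 2^{j\rho}$ then gives this residual as $O(1)$, $O(\log\langle n\rangle)$, or $O(\langle n\rangle^{\rho-2\alpha})$ according as $2\alpha>\rho$, $2\alpha=\rho$, or $2\alpha<\rho$. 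The second sub-case is entirely symmetric with $\alpha$ and $\beta$ interchanged.

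Finally I would verify that these regimes multiply with the outer weight $\langle n\rangle^{-2(\beta+\gamma)}$ (respectively $\langle n\rangle^{-2(\alpha+\gamma)}$) to something $n$-uniform under either hypothesis. The case $2\alpha<\rho$ yields $\langle n\rangle^{\rho-2(\alpha+\beta+\gamma)}$, handled by $\alpha+\beta+\gamma\ge\rho/2$; the case $2\alpha>\rho$ yields $\langle n\rangle^{-2(\beta+\gamma)}$, handled by $\beta+\gamma\ge 0$, which follows from $\alpha+\beta+\gamma\ge\max(\alpha,\beta,\gamma)$. The hard part will be the logarithmic endpoint $2\alpha=\rho$, where the $\log\langle n\rangle$ factor must be absorbed by a strict inequality $\beta+\gamma>0$: in hypothesis (i), the strict inequality $\alpha+\beta+\gamma>\rho/2=\alpha$ delivers exactly this, while in hypothesis (ii) having simultaneously $2\alpha=\rho$ and $\alpha+\beta+\gamma=\rho/2$ would force $\beta+\gamma=0$ and thus $\alpha+\beta+\gamma=\alpha$, contradicting the strict inequality $\alpha+\beta+\gamma>\max(\alpha,\beta,\gamma)$. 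With this endpoint bookkeeping dispatched, $T$ is finite and the lemma follows.
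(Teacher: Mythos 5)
Your proof is correct and reaches the same conclusion by a genuinely different organizational route. Both arguments rest on the same three ingredients---Cauchy--Schwarz, the dimension hypothesis on $\Lambda$, and a dyadic summation---but they are deployed in a different order. The paper performs a full Littlewood--Paley trichotomy, decomposing all three functions into dyadic pieces $f_{j'},g_j,h_l$ with $l\le j\le j'\le j+1$, applying Cauchy--Schwarz at fixed scales to produce the bound $2^{\rho l/2}\|f_{j'}\|_{H^\alpha}\|g_j\|_{H^\beta}\|h_l\|_{H^\gamma}$, and then summing the dyadic indices with the sign conditions on $\alpha,\beta,\gamma$ absorbing the $2^{\rho l/2}$ gain. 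You instead keep $f,g,h$ intact, apply Cauchy--Schwarz twice in the raw frequency variables, and thereby push all the weight into a scalar Schur-type quantity
\[
T=\sup_{n}\,\langle n\rangle^{-2\gamma}\sum_{k\in A_n,\ |k|,|n+k|\le|n|}\langle k\rangle^{-2\alpha}\langle n+k\rangle^{-2\beta},
\]
whose finiteness is then an elementary counting computation in which the dyadic decomposition appears only once. The advantage of your version is that it cleanly separates the functional-analytic step (Cauchy--Schwarz, which delivers $T^{1/2}\|f\|_{H^\alpha}\|g\|_{H^\beta}\|h\|_{H^\gamma}$) from the combinatorial step (bounding $T$), and it avoids tracking three interlocking dyadic indices; the paper's version is the more routine multilinear Littlewood--Paley argument. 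Your endpoint analysis is also correct, including the nontrivial observation that $2\alpha=\rho$ is incompatible with hypothesis (ii), and the reduction to the block $|n|\ge\max(|k|,|m|)$ via the full $S_3$-symmetry of $\Lambda$ and of the hypotheses is the same device used in the paper to restrict to $|k|\ge|m|\ge|n|$.
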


This lemma can be proved by a standard argument using the Littlewood-Paley decomposition (cf.~\cite[Lemma~3.1]{BMN2}, \cite[Lemma~6.2]{CDGG}).
For the sake of completeness, we will give a proof in Section~\ref{sec:appendix}.

We observe that for the set $K^*$ of non-trivial resonant frequencies, Lemma~\ref{lem:c2} shows that $\La :=\Shugo{(n,k,m)}{(-n,k,m)\in K^*}$ satisfies the conditions in Lemma~\ref{lem:RC} with any $\rho \in (1,3]$.
We also notice that
\eqq{\LR{B_R(a\osc ,a\osc )\,,\,a\osc}_{H^1}=\LR{B_R(\nabla a\osc ,a\osc )\,,\,\nabla a\osc}_{L^2},}
since $\LR{B_R(a\osc ,\nabla a\osc )\,,\,\nabla a\osc}_{L^2}=0$ by Lemma~\ref{lem:BR2}.
Now, Lemma~\ref{lem:combi} is deduced from Lemma~\ref{lem:RC} with $\al =\be =0$ and $\ga =\frac{1}{2}+\e$.

\begin{rem}\label{rem:rational}
Lemma~\ref{lem:c2} also holds for any \emph{rational} domains; $\mathbf{T}^3_a=[0,2\pi a_1)\times [0,2\pi a_2)\times [0,2\pi a_3)$ satisfying $a_2^2/a_1^2,a_3^2/a_1^2\in \Bo{Q}$.
In fact, we may assume that $b_i:=a_i^{-2}\in \Bo{N}$, $i=1,2,3$ by a scaling argument.
Then, the resonance condition $\om ^\sgm _{nkm}=0$ is replaced by
\eqq{\sgm _1\frac{k_3}{|\check{k}|}+\sgm _2\frac{m_3}{|\check{m}|}=\sgm _3\frac{n_3}{|\check{n}|}}
with $|\check{k}|^2:=b_1k_1^2+b_2k_2^2+b_3k_3^2$.
Since $|\check{k}|^2\in \Bo{N}$ for any $k\in \Bo{Z}^3$, most of the above argument is applicable, except that Lemma~\ref{lem:c1}~(ii) should be modified as
\eqq{\# \{ (x,y)\in \mathbb{Z}^2 \,|\, b_1x^2+b_2y^2=N \} \le C(\e ,b_1,b_2)N^\varepsilon .}
This is actually true for any $b_1,b_2>0$ by the result of Bombieri and Pila~\cite[Theorem~3]{BP89}.
Once we have the key estimate (Lemma~\ref{lem:c2}), we can show the main result (Theorem~\ref{main}) for rational domains by the same arguments with some trivial modifications.
\end{rem}


\bigskip
\section{Error estimate and conclusion}\label{sec:error}

In this section, we give a proof of the main theorem (Theorem \ref{main}).
Let $\al \in (\frac{3}{4},1]$, $u_0\in H^1$ be an arbitrarily large initial vector field which is real-valued, divergence-free and mean-zero, and let $E:=\tnorm{u_0}{H^1}$.
We focus on the case 
\eqq{P:=\nu ^{-1}E \gec 1.}
In fact, $P\ll 1$ corresponds to the small-data case, where from Theorem~\ref{prop:lwp} we have a unique global solution to \eqref{NSC} for any $\Om \in \R$.

The purpose of this section is to see how global smooth solutions of \eqref{FNS2} (and hence, of \eqref{NSC}) are constructed from those of the limit equation \eqref{limit} in the fast rotation case ($|\Om |\ge \Om _0\gg 1$), and how $\Om _0$ depends on the initial vector field.
In what follows, $C(\al )$ denotes any positive constant depending on $\al$ with $C(\al )\to \I$ as $\al \downarrow \frac{3}{4}$, while $C$ denotes any absolute positive constant.

Theorem~\ref{main} follows once we have the same result for the equation \eqref{FNS2}.
This will be shown by estimating the $H^1$ distance between solution $v(t)$ of \eqref{FNS2} and the corresponding global-in-time solution $U(t)$ of \eqref{limit}.

By \eqref{apri-U}, we know that
\eq{apri-U2}{\tnorm{U(t)}{H^1}^2+\nu \int _0^t\tnorm{U(t')}{H^{1+\al}}^2\,dt'\le \ti{E}^2:=C(\al )E^2P^{C(\al )},\quad t\ge 0.}
Let $\ti{T}_L$ be the local existence time of the $H^1$ solution to \eqref{FNS2} of size $2\ti{E}$.

We shall prove the following by induction:
If we define $\Om _0=\Om _0(\al ,\nu ,E)>0$ as \eqref{def:Om0}, then for $n=1,2,\cdots$, the solution $v(t)$ to \eqref{FNS2} with $|\Om |\ge \Om _0$ exists on $[0,n\ti{T}_L]$ and satisfies
\eq{apri-v}{\tnorm{v(t)}{H^1}^2+\nu \int _0^t\tnorm{v(t')}{H^{1+\al}}^2\,dt'\le (2\ti{E})^2}
for $t\in [0,n\ti{T}_L]$.
For $n=1$ this follows from Theorem~\ref{prop:lwp}, so we assume it for some $n$.
Since $\tnorm{v(n\ti{T}_L)}{H^1}\le 2\ti{E}$, by Theorem~\ref{prop:lwp} again, $v$ extends up to $t=(n+1)\ti{T}_L$  and we have a larger bound:
\eq{apri-v2}{\tnorm{v(t)}{H^1}^2+\nu \int _0^t\tnorm{v(t')}{H^{1+\al}}^2\,dt'\le L^2:=C(\al )(2\ti{E})^2,\quad t\in [0,(n+1)\ti{T}_L].}
It then suffices to show \eqref{apri-v} on $[0,(n+1)\ti{T}_L]$ from \eqref{apri-U2} and \eqref{apri-v2}.


Let us first prepare some useful estimates.
\begin{lem}
Let $\e >0$.
We have
\begin{gather}
\label{est:BR} \big| \LR{B_R(f,g)\,,\,h}_{H^1}\big| \lec _\e \tnorm{f}{H^1}\tnorm{g}{H^{\frac{3}{2}+\e}}\tnorm{h}{H^{\frac{3}{2}}},\\
\label{est:BNR} \big| \LR{B_{N\!R}(\Om t;f,g)\,,\,h}_{H^1}\big| \lec \tnorm{f}{H^1}\tnorm{g}{H^{\frac{7}{4}}}\tnorm{h}{H^{\frac{7}{4}}}.
\end{gather}
\end{lem}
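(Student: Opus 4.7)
My plan for both estimates is the same: bound Fourier coefficients in absolute value, reduce the left-hand side to a trilinear Fourier sum, and invoke Lemma~\ref{lem:RC} with Sobolev exponents $(\alpha,\beta,\gamma)$ chosen so that, after absorbing the weights $\LR{n}^2$ and $|m|$ into the three factors, exactly the target norms appear.

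For~\eqref{est:BR}, I will first use the explicit Fourier formula for $B_R$ together with $|e^\sgm(\cdot)|=1$ to obtain
\[
\big| \LR{B_R(f,g),h}_{H^1}\big| \lec \sum_{\sgm \in \{\pm\}^3}\sum_{\mat{n=k+m\\ k,m\neq 0\\ \om^\sgm_{nkm}=0}} \LR{n}^2 |m|\,|\hhat f(k)|\,|\hhat g(m)|\,|\hhat h(n)|.
\]
Setting $\hhat F(k):=|\hhat f(k)|$, $\hhat G(m):=|m||\hhat g(m)|$, $\hhat H(n):=\LR{n}^2|\hhat h(n)|$ casts this in the form treated by Lemma~\ref{lem:RC}, with the identifications $\tnorm{F}{H^\alpha}=\tnorm{f}{H^\alpha}$, $\tnorm{G}{H^\beta}\sim\tnorm{g}{H^{\beta+1}}$, $\tnorm{H}{H^\gamma}=\tnorm{h}{H^{\gamma+2}}$ for mean-zero fields. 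After the harmless change of variable $n\mapsto -n$, the index set $\La_R$ lies in $\shugo{(n,k,m)\in(\Bo{Z}^3)^3:n+k+m=0}$ and is invariant under permutations of its three entries (the relation $\om^\sgm_{nkm}=0$, rewritten symmetrically in $(-n,k,m)$, is preserved under permutations modulo relabeling of $\sgm$). It obeys the dimension bound $\rho=2$: for each fixed $n$ and each $\sgm$, I would freely choose $(k_1,k_2)$ with $|k_1|,|k_2|\le L$ (giving $O(L^2)$ choices) and then observe that $\om^\sgm_{nk(n-k)}=0$ becomes, after clearing denominators, a polynomial equation in $k_3$ with only $O(1)$ roots. (The finer $L^{1+\e}$ bound from Lemma~\ref{lem:c2} is available for the nontrivial part but is not needed here.) I will then apply Lemma~\ref{lem:RC} with $\alpha=1$, $\beta=\frac{1}{2}+\e$, $\gamma=-\frac{1}{2}$; this falls under case~(i) because $\alpha+\beta+\gamma=1+\e$ strictly exceeds both $\max\{\alpha,\beta,\gamma\}=1$ and $\rho/2=1$, and delivers
\[
\tnorm{F}{H^1}\tnorm{G}{H^{1/2+\e}}\tnorm{H}{H^{-1/2}}\sim \tnorm{f}{H^1}\tnorm{g}{H^{3/2+\e}}\tnorm{h}{H^{3/2}}.
\]

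For~\eqref{est:BNR}, the same pointwise domination works since $|e^{-i\Om t\om^\sgm_{nkm}}|=1$, but now the sum runs over the full convolution set $\shugo{(n,k,m):n=k+m,\,k,m\neq 0}$. Lemma~\ref{lem:RC} applies with $\rho=3$ at the endpoint exponents $\alpha=1$, $\beta=\frac{3}{4}$, $\gamma=-\frac{1}{4}$: here $\alpha+\beta+\gamma=\frac{3}{2}=\rho/2$ and $\max\{\alpha,\beta,\gamma\}=1<\frac{3}{2}$, so case~(ii) is satisfied and yields $\tnorm{f}{H^1}\tnorm{g}{H^{7/4}}\tnorm{h}{H^{7/4}}$.

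The only mildly technical point I foresee is the negative Sobolev index $\gamma=-\frac{1}{2}$ (resp.\ $-\frac{1}{4}$) appearing in the $h$-slot. This should cause no difficulty, since the Littlewood-Paley proof of Lemma~\ref{lem:RC} given in Section~\ref{sec:appendix} is valid for arbitrary real $\alpha,\beta,\gamma$. Alternatively, I could avoid negative indices by using $\LR{n}^2\lec \LR{k}^2+\LR{m}^2$ (via $n=k+m$) to split the sum and then running two positive-index trilinear Sobolev estimates, at the cost of some extra bookkeeping.
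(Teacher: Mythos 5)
Your overall strategy is the same as the paper's: bound the trilinear $H^1$-pairing by a positive Fourier sum over the index set $\La_R$ (resp.\ $\La_{N\!R}$) and invoke Lemma~\ref{lem:RC}. You distribute the weight as $\big(|\hhat f(k)|\big)\big(|m||\hhat g(m)|\big)\big(|n|^2|\hhat h(n)|\big)$ and apply Lemma~\ref{lem:RC} once with a negative exponent $\ga$, whereas the paper instead splits $|n|^{1/2}\lec |k|^{1/2}+|m|^{1/2}$ (using $n+k+m=0$) and applies Lemma~\ref{lem:RC} twice with exponents such as $(\frac12,\frac12+\e,0)$ and $(1,\e,0)$. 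Both routes are valid: Lemma~\ref{lem:RC} is stated for $\al,\be,\ga\in\R$, and you correctly observe that its Littlewood--Paley proof does not require nonnegativity (only $\al+\be\ge 0$ for the two smallest indices, which your choices satisfy). Your exponent bookkeeping in both cases, $(1,\frac12+\e,-\frac12)$ for \eqref{est:BR} under case~(i) with $\rho=2$ and $(1,\frac34,-\frac14)$ for \eqref{est:BNR} under case~(ii) with $\rho=3$, is correct.

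There is, however, a genuine gap in your verification that $\La_R$ satisfies the dimension hypothesis with $\rho=2$. You argue that for fixed $n$, $\sgm$ and free $(k_1,k_2)$, the resonance relation $\om^\sgm_{nk(n-k)}=0$ becomes, after clearing denominators, a polynomial in $k_3$ with $O(1)$ roots. That is correct when $n_3\neq 0$: the relevant polynomial $P(k,-k-n,n;1,1)$ has leading coefficient $-(4|n|^2-n_3^2)n_3^2$ in $k_3^8$, which is nonzero. But when $n_3=0$ this leading coefficient vanishes, and the resonance condition degenerates to $k_3\big(\sgm_1/|k|-\sgm_2/|n-k|\big)=0$. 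For every $(k_1,k_2)$ on the plane $\{2n_1k_1+2n_2k_2=n_1^2+n_2^2\}$ (equivalently $|k|=|-n-k|$, which does not involve $k_3$ at all), \emph{every} $k_3$ with $|k_3|\le L$ is admissible, so the ``$O(1)$ roots in $k_3$'' step fails. The conclusion $\rho=2$ is still true---the paper's proof notes that in this case $k$ is confined to a fixed hyperplane, which has $O(L^2)$ lattice points in the ball of radius $L$---but your argument as written does not establish it. You should add this separate $n_3=0$ case before relying on $\rho=2$.
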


\begin{proof}
We consider the sets
\eqq{\La _R&:=\Shugo{(n,k,m)\in (\Bo{Z}^3\setminus \{ 0\} )^3}{n+k+m=0,\, \om _{nkm}^\sgm =0~\text{for some $\sgm \in \{ \pm \} ^3$}},\\
\La _{N\!R}&:=\Shugo{(n,k,m)\in (\Bo{Z}^3\setminus \{ 0\} )^3}{n+k+m=0,\, \om _{nkm}^\sgm \neq 0~\text{for any $\sgm \in \{ \pm \} ^3$}}.}
Clearly, $\La _{N\!R}$ satisfies the hypotheses of Lemma~\ref{lem:RC} with $\rho =3$.
We recall that $(n,k,m)\in \La _R$ implies $P(k,-k-n,n;1,1)=0$, where $P$ is the polynomial defined in \eqref{BMN-1} with the coefficient of $k_3^8$ being $-(4|n|^2-n_3^2)n_3^2$.
This shows that if $n_3\neq 0$, for $n$, $k_1$ and $k_2$ fixed, there are at most $8$ possibilities for $k_3$.
On the other hand, when $n_3=0$, the resonant condition implies that $|k|=|-n-k|$, so that $k$ must be on the hyperplane passing through $-n/2$ and orthogonal to $-n$.
Hence, $\La_R$ satisfies the hypotheses of Lemma~\ref{lem:RC} with $\rho =2$.

We apply Lemma~\ref{lem:RC} with $\La =\La _R$ and $(\al ,\be ,\ga )=(\frac{1}{2},\frac{1}{2}+\e ,0),(1,\e ,0)$ to have
\eqq{
&\sum _{(n,k,m)\in \La _R}
|\widehat{f}(k)|\cdot |m||\widehat{g}(m)|\cdot |n|^2|\widehat{h}(n)| \\
&\le \sum _{(n,k,m)\in \La _R}
\Big( |k|^{\frac{1}{2}}|\widehat{f}(k)|\cdot |m||\widehat{g}(m)|+|\widehat{f}(k)|\cdot |m|^{\frac{3}{2}}|\widehat{g}(m)|\Big) |n|^{\frac{3}{2}}|\widehat{h}(n)|\\
&\lec \| f\| _{H^1}\| g\| _{H^{\frac{3}{2}+\e}}\| h\| _{H^{\frac{3}{2}}}.
}
This estimate implies \eqref{est:BR}.
The estimate \eqref{est:BNR} follows from Lemma~\ref{lem:RC} with $\La =\La _{N\!R}$ and $(\al ,\be ,\ga )=(\frac{3}{4},\frac{3}{4},0),(1,\frac{1}{2},0)$ as
\eqq{
&\sum _{(n,k,m)\in \La _{N\!R}}
|\widehat{f}(k)|\cdot |m||\widehat{g}(m)|\cdot |n|^2|\widehat{h}(n)| \\
&\le \sum _{(n,k,m)\in \La _{N\!R}}
\Big( |k|^{\frac{1}{4}}|\widehat{f}(k)|\cdot |m||\widehat{g}(m)|+|\widehat{f}(k)|\cdot |m|^{\frac{5}{4}}|\widehat{g}(m)|\Big) |n|^{\frac{7}{4}}|\widehat{h}(n)|\\
&\lec \| f\| _{H^1}\| g\| _{H^{\frac{7}{4}}}\| h\| _{H^{\frac{7}{4}}}.\qedhere
}
\end{proof}

Now, we estimate the difference $w(t):=v(t)-U(t)$, which is smooth for $0<t\le (n+1)\ti{T}_L$ and satisfies
\IVP{eq:w}{&\p _tw+\nu (-\Delta )^\al w+B_R(w,v)+B_R(U,w)+B_{N\!R}(\Om t; v,v)=0,\quad t>0,\\
&w\big| _{t=0}=0.
}
In view of \eqref{apri-U2}, we will obtain \eqref{apri-v} on $[0,(n+1)\ti{T}_L]$ once we show that
\eq{apri-w}{\tnorm{w(t)}{H^1}^2+\nu \int _0^t\tnorm{w(t')}{H^{1+\al}}^2\,dt'\le \ti{E}^2,\qquad t\in [0,(n+1)\ti{T}_L].}

By the $H^1$ energy argument on \eqref{eq:w} and the estimate \eqref{est:BR}, together with interpolation, we have
\eqq{&\frac{d}{dt}\tnorm{w(t)}{H^1}^2+2\nu \tnorm{w(t)}{H^{1+\al}}^2\\
&\le C\tnorm{v(t)}{H^{1+\al}}\tnorm{w(t)}{H^1}\tnorm{w(t)}{H^{1+\al}}+C\tnorm{U(t)}{H^1}\tnorm{w(t)}{H^1}^{\frac{1}{2}}\tnorm{w(t)}{H^{1+\al}}^{\frac{3}{2}}\\
&\hx -2\LR{B_{N\!R}(\Om t; v(t),v(t))\,,\,w(t)}_{H^1}.}
Using Young's inequality, we have
\eqq{\frac{d}{dt}\tnorm{w(t)}{H^1}^2+\nu \tnorm{w(t)}{H^{1+\al}}^2&\le C\Big( \nu ^{-1}\tnorm{v(t)}{H^{1+\al}}^2+\nu ^{-3}\tnorm{U(t)}{H^1}^4\Big) \tnorm{w(t)}{H^1}^2\\
&\hx -2\LR{B_{N\!R}(\Om t; v(t),v(t))\,,\,w(t)}_{H^1},}
and hence, by \eqref{apri-U2},
\eq{apri-w2}{&\tnorm{w(t)}{H^1}^2+\nu \int _0^t\tnorm{w(t')}{H^{1+\al}}^2\,dt'\\
&\le C\int _0^t\Big( \nu ^{-1}\tnorm{v(t')}{H^{1+\al}}^2+\nu ^{-3}\ti{E}^2\tnorm{U(t')}{H^{1+\al}}^2\Big) \tnorm{w(t')}{H^1}^2\,dt'\\
&\hx -2\int _0^t\LR{B_{N\!R}(\Om t'; v(t'),v(t'))\,,\,w(t')}_{H^1}\,dt',\qquad t\in [0,(n+1)\ti{T}_L].}

To control the last integral, we claim the following:
\begin{lem}\label{lem:est-Om}
For given $\de >0$, there exists $\Om _0=\Om _0(\de , \al ,\nu ,E)>0$ such that if $|\Om |\ge \Om _0$, then we have
\eqq{&\Big| 2\int _0^t\LR{B_{N\!R}(\Om t'; v(t'),v(t'))\,,\,w(t')}_{H^1}\,dt'\Big| \\
&\le \de +\frac{1}{2}\Big( \tnorm{w(t)}{H^1}^2+\nu \int _0^t\tnorm{w(t')}{H^{1+\al}}^2\,dt'\Big) ,\qquad t\in [0,(n+1)\ti{T}_L].}
One can take $\Om _0$ as
\eqq{\Om _0=C(\al )P^{C(\al )}(\de ^{-1}\nu ^2)^{C(\al )}E.}
\end{lem}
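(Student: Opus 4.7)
The plan is to exploit the time oscillations of $B_{N\!R}$ via integration by parts. Formally, for each non-resonant triad one writes
\[
e^{-i\Omega t' \omega^\sigma_{nkm}} = \frac{1}{-i\Omega \omega^\sigma_{nkm}}\,\partial_{t'} e^{-i\Omega t' \omega^\sigma_{nkm}},
\]
so each time the phase is moved onto a time derivative one gains an algebraic factor $1/(\Omega \omega^\sigma_{nkm})$ at the cost of boundary terms at $t'=0,t$ and bulk terms involving $\partial_{t'} v$ and $\partial_{t'} w$. The obstruction is that $|\omega^\sigma_{nkm}|$ can be arbitrarily small for high-frequency near-resonances, so no uniform integration by parts is available; the standard cure is a high-frequency cutoff.

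First, I would fix a large parameter $N = N(\delta,\alpha,\nu,E)$ and split $B_{N\!R} = B_{N\!R}^{\le N} + B_{N\!R}^{>N}$ according to whether $\max\{|n|,|k|,|m|\} \le N$ or not. For the high-frequency remainder, the trilinear bound \eqref{est:BNR} combined with Sobolev interpolation yields a gain of $N^{-\eta}$ for some $\eta = \eta(\alpha) > 0$ upon replacing an $H^{7/4}$ factor by $N^{-\eta} H^{1+\alpha}$ (this uses $\alpha > 3/4$); together with $\|w\|_{H^1} \lesssim L$ and the dissipation bound $\nu \int_0^t \|v\|_{H^{1+\alpha}}^2\,dt' \le L^2$ from \eqref{apri-v2}, the contribution of $B_{N\!R}^{>N}$ to the integral is controlled by $C(\alpha)N^{-\eta} L^3/\nu$, which is $\le \delta/2$ once $N$ is a suitable polynomial in $L/(\nu\delta)$.

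For the low-frequency part, the essential input is a quantitative lower bound
\[
|\omega^\sigma_{nkm}| \gtrsim N^{-A}
\]
for some absolute $A>0$ valid on non-resonant triads with $\max\{|n|,|k|,|m|\}\le N$. This follows from the integrality argument reviewed in Section~\ref{subsec:BMN}: the polynomial $P(k,m,-n;1,1)$ from \eqref{BMN-1} vanishes exactly when $\omega^\sigma_{nkm}=0$ for some $\sigma$, so non-resonance forces $|P|\ge 1$, and since $P$ has bounded degree in the squared frequencies this produces a polynomial-in-$N$ lower bound on $|\omega|$. After integrating by parts, the $t'=0$ boundary term vanishes because $w(0)=0$, the $t'=t$ boundary term is absorbed into $\tfrac14\|w(t)\|_{H^1}^2$ by Cauchy-Schwarz, and the bulk terms are expanded through equations \eqref{FNS2} and \eqref{eq:w} so that $\partial_{t'}v$ and $\partial_{t'}w$ become dissipation plus trilinear expressions in $v,U,w$ (with some further oscillating phases that are not integrated by parts again). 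Each such piece is bounded using \eqref{est:BR}, \eqref{est:BNR} together with the a priori bounds \eqref{apri-U2}, \eqref{apri-v2}, and the prefactor $N^A/|\Omega|$ then makes the total $\le \delta/2 + \tfrac14(\|w(t)\|_{H^1}^2 + \nu \int_0^t \|w\|_{H^{1+\alpha}}^2\,dt')$ as soon as $|\Omega|$ exceeds $N^A$ times a polynomial in $E/\nu$ and $\delta^{-1}$.

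The hardest part is not any single estimate but the bookkeeping that combines the two regimes: once $N$ has been fixed to kill the high-frequency remainder, one must substitute back into $N^A/|\Omega|$ and track how the polynomial powers of $L \sim E P^{C(\alpha)}$, $\nu$ and $\delta$ combine to yield the claimed form $\Omega_0 = C(\alpha) P^{C(\alpha)}(\delta^{-1}\nu^2)^{C(\alpha)} E$. A minor but genuine subtlety is that when $\partial_{t'}v$ is replaced by the full nonlinearity $B(\Omega t';v,v)$ one acquires a further oscillating quintic-type term, but since there is no second integration by parts the trilinear estimates above still suffice for its control.
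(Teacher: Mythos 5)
Your overall scheme is exactly the one the paper uses for Lemma~\ref{lem:est-Om}: split off the high frequencies with a cutoff $N$, use the $N^{-(\al-\frac34)}$-type gain from $\al>\frac34$ together with \eqref{apri-v2} to make that piece $O(\de )$, and on the low frequencies integrate by parts in $t'$, estimate the boundary and bulk terms (with $\p _{t'}v,\p _{t'}w$ replaced via the equations) by the trilinear bounds and the a priori estimates, and finally balance $N$ against $|\Om |$ to get $\Om _0=C(\al )P^{C(\al )}(\de ^{-1}\nu ^2)^{C(\al )}E$. Two of your steps are only loosely stated but repairable: the high-frequency bound with $\tnorm{w}{H^1}\lec L$ in place of the paper's absorption of $\frac{\nu}{8}\tnorm{w}{H^{1+\al}}^2$ needs a variant of \eqref{est:BNR} with $w$ measured in $H^1$ and both $v$'s in $H^{1+\al -}$ (this does hold for $\al >\frac34$ via Lemma~\ref{lem:RC}, but it is not literally \eqref{est:BNR}).

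The genuine gap is in your justification of the small-divisor lower bound. You argue: $P(k,m,n;1,1)=0$ exactly when $\om ^\sgm _{nkm}=0$ for \emph{some} $\sgm$, hence ``non-resonance forces $|P|\ge 1$'' and integrality gives $|\om ^\sgm |\gec N^{-A}$. But the sum defining $B_{N\!R}$ contains, for each fixed $\sgm$, all triads with that particular $\om ^\sgm _{nkm}\neq 0$; it may well happen that a \emph{different} sign combination $\sgm '$ is resonant for the same $(n,k,m)$. In that case $P(k,m,n;1,1)=0$, the product of the four phases vanishes, and the integrality argument tells you nothing about the nonzero phase $\om ^\sgm _{nkm}$ you actually need to bound from below. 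So the implication ``this term is non-resonant $\Rightarrow |P|\ge 1$'' is false on part of the low-frequency sum, and without handling it the $1/(\Om \om ^\sgm _{nkm})$ factors are not controlled there. The paper closes exactly this case in the proof of \eqref{claim:lowerom}: if $\om ^{\sgm '}_{nk(n-k)}=0$ for some $\sgm '$, then every $\om ^\sgm _{nk(n-k)}$ lies in the finite set $\{ 0,\pm 2\frac{k_3}{|k|},\pm 2\frac{n_3-k_3}{|n-k|},\pm 2\frac{n_3}{|n|}\}$, so any nonzero value is $\gec N^{-1}$; only when \emph{all} sign choices are non-resonant does one invoke the integer numerator in the product formula to get the $N^{-12}$ bound. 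Adding this short case analysis repairs your argument, and with it the rest of your bookkeeping goes through as in the paper.
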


Let us admit the above lemma and continue the proof.
From \eqref{apri-w2}, we have
\eqq{&\tnorm{w(t)}{H^1}^2+\nu \int _0^t\tnorm{w(t')}{H^{1+\al}}^2\,dt'\\
&\le 2\de +C\int _0^t\Big( \nu ^{-1}\tnorm{v(t')}{H^{1+\al}}^2+\nu ^{-3}\ti{E}^2\tnorm{U(t')}{H^{1+\al}}^2\Big) \tnorm{w(t')}{H^1}^2\,dt'}
for $t\in [0,(n+1)\ti{T}_L]$ if $|\Om |\ge \Om _0$, with $\de >0$ to be chosen later.
By the Gronwall inequality and \eqref{apri-U2}, \eqref{apri-v2},
\eqq{&\norm{w(t)}{H^1}^2+\nu \int _0^t\norm{w(t')}{H^{1+\al}}^2\,dt'\\
&\le 2\de \exp \Big[ C\int _0^t\Big( \nu ^{-1}\tnorm{v(t')}{H^{1+\al}}^2+\nu ^{-3}\ti{E}^2\tnorm{U(t')}{H^{1+\al}}^2\Big) \,dt'\Big] \\
&\le 2\de e^{C(\nu ^{-2}L^2+\nu ^{-4}\ti{E}^4)},\qquad t\in [0,(n+1)\ti{T}_L].}
We finally obtain the claimed estimate \eqref{apri-w} by choosing $\de$ as
\eqq{2\de e^{C(\nu ^{-2}L^2+\nu ^{-4}\ti{E}^4)} \le \ti{E}^2.}
Since $\ti{E},L\le C(\al )P^{C(\al )}E$, we can take $\Om _0$ as \eqref{def:Om0}.
Note that $\de$ does not depend on $n$, so that we can proceed to bigger and bigger $n$ without redefining $\Om _0$.

This concludes the proof of Theorem~\ref{main}, up to the proof of Lemma~\ref{lem:est-Om}.

\begin{proof}[Proof of Lemma~\ref{lem:est-Om}]
Let $N$ be a large positive number to be chosen later, and $P_{\le N}:=\F ^{-1}\chi _{\{ |n|\le N\}}\F$, $P_{>N}:=1-P_{\le N}$.
We see that
\eqq{&\LR{B_{N\!R}(\Om t; v,v)\,,\,w}_{H^1}-\LR{B_{N\!R}(\Om t; P_{\le N}v,P_{\le N}v)\,,\,w}_{H^1}\\
&=\LR{B_{N\!R}(\Om t; P_{>N}v,v)\,,\,w}_{H^1}+\LR{B_{N\!R}(\Om t; P_{\le N}v,P_{>N}v)\,,\,w}_{H^1}\\
&=\LR{B_{N\!R}(\Om t; P_{>N}v,P_{>N/2}v)\,,\,w}_{H^1}+\LR{B_{N\!R}(\Om t; P_{>N}v,P_{\le N/2}v)\,,\,P_{>N/2}w}_{H^1}\\
&\hx +\LR{B_{N\!R}(\Om t; P_{>N}v,P_{\le N/2}v)\,,\,P_{\le N/2}w}_{H^1}+\LR{B_{N\!R}(\Om t; P_{\le N}v,P_{>N}v)\,,\,w}_{H^1}.}
Note that the third term vanishes in the right-hand side of the last equality.
By the inequality
\[ \tnorm{P_{>N}f}{H^{\frac{7}{4}}}\le N^{-(\al -\frac{3}{4})}\tnorm{f}{H^{1+\al}}\]
and \eqref{est:BNR}, we then obtain 
\eqq{&\Big| \LR{B_{N\!R}(\Om t; v,v)\,,\,w}_{H^1}-\LR{B_{N\!R}(\Om t; P_{\le N}v,P_{\le N}v)\,,\,w}_{H^1}\Big| \\
&\le CN^{-(\al -\frac{3}{4})}\tnorm{v}{H^1}\tnorm{v}{H^{1+\al}}\tnorm{w}{H^{1+\al}}\\
&\le \frac{\nu}{8}\tnorm{w}{H^{1+\al}}^2+C\nu ^{-1}N^{-(2\al -\frac{3}{2})}\tnorm{v}{H^1}^2\tnorm{v}{H^{1+\al}}^2.}
Invoking \eqref{apri-v2}, we have
\eq{est:Om1}{&2\int _0^t\Big| \LR{B_{N\!R}(\Om t'; v(t'),v(t'))-B_{N\!R}(\Om t'; P_{\le N}v(t'),P_{\le N}v(t'))\,,\,w(t')}_{H^1}\Big| \,dt'\\
&\le \frac{\nu}{4}\int _0^t\tnorm{w(t')}{H^{1+\al}}^2\,dt'+C\nu ^{-2}L^4N^{-(2\al -\frac{3}{2})}.}


To estimate the low-frequency term, we first claim that:
\eq{claim:lowerom}{\inf \bigg\{ |\om ^\sgm _{nk(n-k)}|\,\bigg| \,\begin{matrix} \sgm \in \{ \pm \} ^3,\,n,k\in \Bo{Z}^3\setminus \{ 0\} ~\text{s.t.}~n\neq k,\\
\om ^\sgm _{nk(n-k)}\neq 0,\,|k|\le N,\,|n-k|\le N\end{matrix}\bigg\} \gec N^{-12}.}

To this end, we take $k,n\neq 0$ such that $|k|,|n-k|\le N$ and $n-k\neq 0$.
If there is a $\sgm \in \{ \pm \}^3$ such that $\om ^\sgm _{nk(n-k)}=0$, then we see that
\eqq{\om ^\sgm _{nk(n-k)}\in \shugo{ 0,\,\pm 2\frac{k_3}{|k|},\pm 2\frac{n_3-k_3}{|n-k|},\,\pm 2\frac{n_3}{|n|}}}
for any $\sgm \in \{ \pm \} ^3$.
In this case, $|\om ^\sgm _{nk(n-k)}|\gec N^{-1}$ unless $\om ^\sgm _{nk(n-k)}=0$.

We thus assume that $\om ^\sgm _{nk(n-k)}\neq 0$ for all $\sgm$.
In this case, by the identity
\eqq{&\prod _{\sgm _1,\sgm _2\in \{ \pm \}}\om ^{\sgm_1,\sgm _2,-}_{nk(n-k)}\\
&=\frac{k_3^4}{|k|^4}+\frac{(n_3-k_3)^4}{|n-k|^4}+\frac{n_3^4}{|n|^4}-2\Big( \frac{k_3^2}{|k|^2}\frac{(n_3-k_3)^2}{|n-k|^2} +\frac{n_3^2}{|n|^2}\frac{k_3^2}{|k|^2}+\frac{(n_3-k_3)^2}{|n-k|^2}\frac{n_3^2}{|n|^2}\Big) \\
&=\frac{\text{(non-zero integer)}}{|k|^4|n-k|^4|n|^4},}
the product of these four $\omega$'s has a lower bound $2^{-4}N^{-12}$. 
Since each of them has an upper bound $|\om ^\sgm _{nk(n-k)}|\le 3$, we have $|\om ^\sgm _{nk(n-k)}|\ge 3^{-3}2^{-4}N^{-12}$ for any $\sgm$.
Therefore, \eqref{claim:lowerom} has been proved.

By integration by parts in $t'$, we see that
\eqq{&\int _0^t\LR{B_{N\!R}(\Om t'; P_{\le N}v(t'),P_{\le N}v(t'))\,,\,w(t')}_{H^1}\,dt'\\
&=i\int _0^t\sum _{\sgm}\sum _{\mat{n,k\in \Bo{Z}^3\setminus \{ 0\} \\ \om ^\sgm _{nk(n-k)}\neq 0\\ |k|,|n-k|\le N}}e^{-i\Om t'\om ^\sgm _{nk(n-k)}}\big[ \hhat{v}^{\sgm _1}(t',k)\cdot (n-k)\big] \big[ \hhat{v}^{\sgm _2}(t',n-k)\cdot |n|^2\hhat{w}^{\sgm _3}(t',n)^*\big] \,dt'\\
&=\bigg[ \sum _{\sgm}\sum _{\mat{\om ^\sgm _{nk(n-k)}\neq 0\\ |k|,|n-k|\le N}}\frac{e^{-i\Om t'\om ^\sgm _{nk(n-k)}}}{-\Om \om ^\sgm _{nk(n-k)}}\big[ \hhat{v}^{\sgm _1}(t',k)\cdot (n-k)\big] \big[ \hhat{v}^{\sgm _2}(t',n-k)\cdot |n|^2\hhat{w}^{\sgm _3}(t',n)^*\big] \bigg] _0^t\\
&\hx +\int _0^t\sum _{\sgm}\sum _{\mat{\om ^\sgm _{nk(n-k)}\neq 0\\ |k|,|n-k|\le N}}\frac{e^{-i\Om t'\om ^\sgm _{nk(n-k)}}}{\Om \om ^\sgm _{nk(n-k)}}\big[ \p _{t'}\hhat{v}^{\sgm _1}(t',k)\cdot (n-k)\big] \big[ \hhat{v}^{\sgm _2}(t',n-k)\cdot |n|^2\hhat{w}^{\sgm _3}(t',n)^*\big] \,dt'\\
&\hx +\int _0^t\sum _{\sgm}\sum _{\mat{\om ^\sgm _{nk(n-k)}\neq 0\\ |k|,|n-k|\le N}}\frac{e^{-i\Om t'\om ^\sgm _{nk(n-k)}}}{\Om \om ^\sgm _{nk(n-k)}}\big[ \hhat{v}^{\sgm _1}(t',k)\cdot (n-k)\big] \big[ \p _{t'}\hhat{v}^{\sgm _2}(t',n-k)\cdot |n|^2\hhat{w}^{\sgm _3}(t',n)^*\big] \,dt'\\
&\hx +\int _0^t\sum _{\sgm}\sum _{\mat{\om ^\sgm _{nk(n-k)}\neq 0\\ |k|,|n-k|\le N}}\frac{e^{-i\Om t'\om ^\sgm _{nk(n-k)}}}{\Om \om ^\sgm _{nk(n-k)}}\big[ \hhat{v}^{\sgm _1}(t',k)\cdot (n-k)\big] \big[ \hhat{v}^{\sgm _2}(t',n-k)\cdot |n|^2\p _{t'}\hhat{w}^{\sgm _3}(t',n)^*\big] \,dt'.
}
We assume that $|\Om |$ is greater than some $\Om _0$ to be determined.
Invoking \eqref{claim:lowerom} and \eqref{est:BNR}, we have
\eqq{&\Big| \int _0^t\LR{B_{N\!R}(\Om t'; P_{\le N}v(t'),P_{\le N}v(t'))\,,\,w(t')}_{H^1}\,dt'\Big| \\
&\le \frac{CN^{12}}{\Om _0}\bigg[ \norm{P_{\le N}v(t)}{H^1}\norm{P_{\le N}v(t)}{H^{\frac{7}{4}}}\norm{P_{\le 2N}w(t)}{H^{\frac{7}{4}}}\\
&\hxx +\int _0^t\Big( \norm{P_{\le N}\p _{t'}v(t')}{H^1}\norm{P_{\le N}v(t')}{H^{\frac{7}{4}}}\norm{P_{\le 2N}w(t')}{H^{\frac{7}{4}}}\\
&\hxx\hxx +\norm{P_{\le N}v(t')}{H^1}\norm{P_{\le N}\p _{t'}v(t')}{H^{\frac{7}{4}}}\norm{P_{\le 2N}w(t')}{H^{\frac{7}{4}}}\\
&\hxx\hxx +\norm{P_{\le N}v(t')}{H^1}\norm{P_{\le N}v(t')}{H^{\frac{7}{4}}}\norm{P_{\le 2N}\p _{t'}w(t')}{H^{\frac{7}{4}}}\Big) dt'\bigg] \\
&\le \frac{CN^{12}}{\Om _0}\bigg[ N^{\frac{3}{2}}\tnorm{v(t)}{H^1}^2\tnorm{w(t)}{H^1}\\
&\hx +N^{\frac{7}{4}}\int _0^t\Big( \tnorm{\p _{t'}v(t')}{L^2}\tnorm{v(t')}{H^1}\tnorm{w(t')}{H^{\frac{7}{4}}}+\tnorm{v(t')}{H^1}\tnorm{v(t')}{H^{\frac{7}{4}}}\tnorm{\p _{t'}w(t')}{L^2}\Big) dt'\bigg] .}

Time derivatives of $v$ and $w$ can be estimated by using the equations and Lemma~\ref{lem:Sobolev} (assuming $\al \in (\frac{3}{4},1]$), as follows:
\eqq{\tnorm{\p _tv(t)}{L^2}&=\norm{\nu (-\Delta )^\al v+B(\Om t;v,v)}{L^2}\le \nu \tnorm{v}{H^{1+\al}}+C\tnorm{v}{H^1}\tnorm{v}{H^{1+\al}},\\
\tnorm{\p _tw(t)}{L^2}&=\norm{\nu (-\Delta )^\al w+B_R(w,v)+B_R(U,w)+B_{N\!R}(\Om t; v,v)}{L^2}\\
&\le \nu \tnorm{w}{H^{1+\al}}+C\big( \tnorm{v}{H^1}+\tnorm{U}{H^1}\big) \tnorm{w}{H^{1+\al}}+C\tnorm{v}{H^1}\tnorm{v}{H^{1+\al}}.}

By these estimates and Young's inequality with \eqref{apri-U2} and \eqref{apri-v2}, we obtain that
\eqq{&2\Big| \int _0^t\LR{B_{N\!R}(\Om t'; P_{\le N}v(t'),P_{\le N}v(t'))\,,\,w(t')}_{H^1}\,dt'\Big| \\
&\le \frac{CN^C}{\Om _0}\bigg[ \tnorm{v(t)}{H^1}^2\tnorm{w(t)}{H^1}\\
&\hx +\int _0^t\Big\{ \tnorm{w}{H^{1+\al}}\tnorm{v}{H^1}\big( \nu +\tnorm{U}{H^1}+\tnorm{v}{H^1}\big) \tnorm{v}{H^{1+\al}}+\tnorm{v}{H^1}^2\tnorm{v}{H^{1+\al}}^2\Big\} \,dt'\bigg] \\
&\le \frac{1}{4}\Big( \tnorm{w(t)}{H^1}^2+\nu \int _0^t\tnorm{w(t')}{H^{1+\al}}^2\,dt'\Big) +\frac{CN^C}{\Om _0}\int _0^t \tnorm{v}{H^1}^2\tnorm{v}{H^{1+\al}}^2\,dt'\\
&\hx +\frac{CN^C}{\Om _0^2}\Big( \tnorm{v(t)}{H^1}^4+\nu ^{-1}\int _0^t \tnorm{v}{H^1}^2\big( \nu ^2+\tnorm{U}{H^1}^2+\tnorm{v}{H^1}^2\big) \tnorm{v}{H^{1+\al}}^2\,dt' \Big) \\
&\le \frac{1}{4}\Big( \tnorm{w(t)}{H^1}^2+\nu \int _0^t\tnorm{w(t')}{H^{1+\al}}^2\,dt'\Big) +\frac{CN^CL^4}{\Om _0\nu}+\frac{CN^C}{\Om _0^2}\Big( 1+\frac{\ti{E}^2+L^2}{\nu ^2}\Big) L^4.
}
Combining it with \eqref{est:Om1}, we have
\eqq{&2\Big| \int _0^t\LR{B_{N\!R}(\Om t'; v(t'),v(t'))\,,\,w(t')}_{H^1}\,dt'\Big| \\
&\le \frac{1}{2}\Big( \tnorm{w(t)}{H^1}^2+\nu \int _0^t\tnorm{w(t')}{H^{1+\al}}^2\,dt'\Big) \\
&\hx +C\nu ^2\bigg[ \Big( \frac{L}{\nu}\Big) ^4N^{-(2\al -\frac{3}{2})}+\frac{N^C}{\Om _0/\nu}\Big( \frac{L}{\nu}\Big) ^4+\frac{N^C}{(\Om _0/\nu )^2}\Big\{ 1+\Big( \frac{\ti{E}+L}{\nu}\Big) ^2\Big\} \Big( \frac{L}{\nu}\Big) ^4\bigg] }
for $t\in [0,(n+1)\ti{T}_L]$.
Recalling that $\frac{\ti{E}}{\nu},\frac{L}{\nu}\le C(\al )P^{C(\al )}$, the above bound can be rewritten as
\eqq{\frac{1}{2}\Big( \tnorm{w(t)}{H^1}^2+\nu \int _0^t\tnorm{w(t')}{H^{1+\al}}^2\,dt'\Big) +\nu ^2\bigg[ N^{-(2\al -\frac{3}{2})}+\frac{N^C}{\Om _0/\nu}\bigg] C(\al )P^{C(\al )}}

Now, for given $\de$, we take $N$ as
\eqq{C(\al )P^{C(\al )}\nu ^2N^{-(2\al -\frac{3}{2})}\le \frac{\de}{2}\qquad \text{i.e.}\quad N\ge C(\al )P^{C(\al )}(\de ^{-1}\nu ^2)^{C(\al )}
}
and then take $\Om _0$ so that
\eqs{\frac{C(\al )P^{C(\al )}\nu ^2N^C}{\Om _0/\nu}\le \frac{\de}{2}\\
\text{i.e.}\quad \Om _0\ge C(\al )P^{C(\al )}\de ^{-1}\nu ^3(\de ^{-1}\nu ^2)^{C(\al )}N^C\ge C(\al )P^{C(\al )}(\de ^{-1}\nu ^2)^{C(\al )}E,}
concluding the proof of Lemma~\ref{lem:est-Om}.
\end{proof}


\bigskip
\section{Appendix}

\subsection{Sobolev estimates}\label{sec:appendix}

Here, we give a proof of Lemma~\ref{lem:RC}.
\begin{proof}[Proof of Lemma~\ref{lem:RC}]
By symmetry of $\La$, we may restrict the summation onto the frequencies satisfying $|k|\ge |m|\ge |n|$ in the left-hand side of the claimed estimate.

We define the dyadic set $\Sigma _j:=\{ k\in \mathbb{Z}^d:2^j\le |k|<2^{j+1}\}$ for $j=0,1,2,\dots$ and decompose mean-zero $f$ as $f=\sum _{j\ge 0}f_j$ with $\widehat{f}_j:=\widehat{f}\chi _{\Sigma _j}$, and similarly for $g$ and $h$.
Note that $\| f\|_{H^\alpha}=\big( \sum _{j\ge 0}\| f_j\|_{H^\alpha}^2\big) ^{1/2}$.
Since $n+k+m=0$ and $|k|\ge |m|\ge |n|$ implies $|k|\le 2|m|$, it holds that
\[ S:=\Big| \sum _{
\begin{smallmatrix}
(n,k,m)\in \La \\
|k|\ge |m|\ge |n|
\end{smallmatrix}
}
\widehat{f}(k)\widehat{g}(m)\widehat{h}(n)\Big| 
\le \sum _{j\ge 0}\sum _{j'=j,j+1}\sum _{0\le l\le j}\sum _{(n,k,m)\in \La}\big| \widehat{f}_{j'}(k)\widehat{g}_j(m)\widehat{h}_l(n)\big| .
\]

By the Cauchy-Schwarz inequality and the dimension hypothesis on $\La$, we have
\begin{align*}
&2^{\alpha j'+\beta j+\gamma l}\sum _{(n,k,m)\in \La}\big| \widehat{f}_{j'}(k)\widehat{g}_j(m)\widehat{h}_l(n)\big| \\
&\le \Big( \sum _{k\in \mathbb{Z}^3}2^{2\alpha j'}|\widehat{f}_{j'}(k)|^2\Big) ^{1/2}\Big( \sum _{k\in \mathbb{Z}^3}\Big( \sum _{\begin{smallmatrix} m,n\in \mathbb{Z}^3\\ (n,k,m)\in \La \end{smallmatrix}}2^{\beta j}|\widehat{g}_j(m)|\cdot 2^{\gamma l}|\widehat{h}_l(n)|\Big) ^2\Big) ^{1/2}\\
&\lesssim 2^{\frac{\rho}{2}l}\Big( \sum _{k\in \mathbb{Z}^3}2^{2\alpha j'}|\widehat{f}_{j'}(k)|^2\Big) ^{1/2}\Big( \sum _{k\in \mathbb{Z}^3}\sum _{\begin{smallmatrix} m,n\in \mathbb{Z}^3\\ (n,k,m)\in \La \end{smallmatrix}}2^{2\beta j}|\widehat{g}_j(m)|^2\cdot 2^{2\gamma l}|\widehat{h}_l(n)|^2\Big) ^{1/2}\\
&\lesssim 2^{\frac{\rho}{2}l}
\| f_{j'}\|_{H^\alpha}\| g_j\|_{H^\beta}\| h_l\|_{H^\gamma},
\end{align*}
so that
\[
S\lesssim \sum _{j\ge 0}\sum _{j'=j,j+1}\| f_{j'}\|_{H^\alpha}\| g_j\|_{H^\beta}\sum _{0\le l\le j}\| h_l\|_{H^\gamma}\cdot 2^{-\alpha j'-\beta j-\gamma l+\frac{\rho}{2}l}.
\]

When (i) holds, we define $\delta >0$ so that $\frac{\rho}{2}+\delta = \alpha +\beta +\gamma$.
By applying the Cauchy-Schwarz inequality in $j$ and $l$, we have
\eqq{\sum _{j\ge 0}\sum _{j'=j,j+1}\| f_{j'}\|_{H^\alpha}\| g_j\|_{H^\beta}\sum _{0\le l\le j}2^{-\de l}\| h_l\|_{H^\gamma}\lec \| f\|_{H^\alpha}\| g\|_{H^\beta}\| h\|_{H^\gamma}.}
It then suffices to show 
\[ p:=-\alpha j'-\beta j-\gamma l+\frac{\rho}{2}l+\delta l\le C\]
under the condition $0\le l\le j\le j'\le j+1$.

It is enough to consider the worst case that $\alpha \le \beta \le \gamma$.
By the assumption $\alpha +\beta \ge 0$, we have
\[ \alpha j'+\beta j+\gamma l=\alpha (j'-j)+(\alpha +\beta )j+\gamma l\ge -|\alpha |+(\alpha +\beta +\gamma )l,\]
which implies $p\le |\alpha |$.
This concludes the proof for the case (i).

When (ii) holds, we set $\de =\al +\be +\ga -\max \{ \al ,\be ,\ga \} >0$.
Since
\eqq{\sum _{j\ge 0}\sum _{j'=j,j+1}\| f_{j'}\|_{H^\alpha}\| g_j\|_{H^\beta}\sum _{0\le l\le j}2^{-\de (j-l)}\| h_l\|_{H^\gamma}\lec \| f\|_{H^\alpha}\| g\|_{H^\beta}\| h\|_{H^\gamma},}
it suffices to prove
\[ q:=-\alpha j'-\beta j-\gamma l+\frac{\rho}{2}l+\delta (j-l)\le C.\]
Considering the worst case $\alpha \le \beta \le \gamma$, we see that
\eqq{\alpha j'+\beta j+\gamma l=\alpha (j'-j) +(\al +\be )(j-l) +(\alpha +\be +\ga )l\ge -|\al |+\de (j-l) +\frac{\rho}{2}l,}
which implies $q\le |\al |$.
This concludes the proof for the case (ii).
\end{proof}

Next, we recall the following Sobolev estimate.
\begin{lem}\label{lem:Sobolev}
Let $d\ge 1$ and $\al,\be ,\ga \in \R$.
The inequality
\eqq{\big| \LR{fg,h}_{L^2(\T ^d)}\big| \lec \tnorm{f}{H^{\al}(\T ^d)}\tnorm{g}{H^{\be}(\T ^d)}\tnorm{h}{H^{\ga}(\T ^d)},}
or equivalently, 
\eqq{\tnorm{fg}{H^{-\ga}(\T ^d)}\lec \tnorm{f}{H^{\al}(\T ^d)}\tnorm{g}{H^{\be}(\T ^d)},}
holds if and only if one of the following is satisfied:
\begin{enumerate}
\item $\al +\be +\ga\ge \max \{ \al ,\be ,\ga \}$ and $\al +\be +\ga>\frac{d}{2}$;
\item $\al +\be +\ga> \max \{ \al ,\be ,\ga \}$ and $\al +\be +\ga=\frac{d}{2}$.
\end{enumerate}
\end{lem}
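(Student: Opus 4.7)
The plan is to reduce the left-hand side to a trilinear lattice sum with the convolution constraint $n+k+m=0$, then apply Lemma~\ref{lem:RC}; the converse direction is handled by standard concentration counterexamples.

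For the sufficiency, I would use the Fourier series representation on $\T^d$:
\eqq{\LR{fg,\,h}_{L^2(\T^d)} = (2\pi)^d \sum_{\mat{(n,k,m) \in (\Bo{Z}^d)^3 \\ n+k+m=0}} \hhat{f}(k)\,\hhat{g}(m)\,\hhat{h}(n)^*,}
obtained from $\hhat{\bar h}(n) = \hhat{h}(-n)^*$ together with a relabeling $n \mapsto -n$ of the summation variable. Taking absolute values, the claim reduces to estimating
\eqq{\sum_{(n,k,m) \in \La _0} |\hhat{f}(k)|\,|\hhat{g}(m)|\,|\hhat{h}(n)|,\qquad \La _0 := \Shugo{(n,k,m) \in (\Bo{Z}^d)^3}{n+k+m=0},}
so I would invoke Lemma~\ref{lem:RC} with $\La = \La _0$ and $\rho = d$. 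The symmetry hypothesis is immediate from the linearity of the constraint. For the dimension hypothesis, once $n$ is fixed, $k$ varies freely over $\Bo{Z}^d \cap \{|k| \le L\}$, whose cardinality is $O(L^d)$, and then $m = -n-k$ is uniquely determined. Since conditions (i) and (ii) of Lemma~\ref{lem:RC} with $\rho = d$ coincide exactly with (i) and (ii) in the present statement, sufficiency follows.

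For the necessity, I would produce concentration counterexamples. Taking $\hhat{f} = \hhat{g} = \hhat{h} = \chi _{\Sigma _N}$ with $\Sigma _N := \{\xi \in \Bo{Z}^d : 2^N \le |\xi | < 2^{N+1}\}$, one has $\tnorm{f}{H^\al} \sim 2^{N(\al + d/2)}$ (similarly for $g, h$), while the trilinear sum has size $\sim 2^{2Nd}$ since a positive fraction of pairs $(k,m) \in \Sigma _N^2$ satisfy $-k-m \in \Sigma _N$. Matching exponents as $N \to \I$ forces $\al + \be + \ga \ge d/2$. Strict inequality in case (i) is obtained by superposing $\sim \log R$ dyadic scales, which produces a logarithmic gap between the two sides. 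The condition $\al + \be + \ga > \max \{\al , \be , \ga \}$ in case (ii), and more generally the max condition in (i), is tested by concentrating a single function on a high dyadic shell $\Sigma _N$ and the other two near frequency $\sim 1$, which saturates the bound only when the loss from the low-frequency factors is compensated.

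The main difficulty is essentially organizational rather than analytic: once Lemma~\ref{lem:RC} is available, the sufficiency direction is immediate, and the necessity reduces to routine scaling examples of Knapp type. The only mild subtlety is the positive-density claim $\# \{(k,m) \in \Sigma _N^2 : -k-m \in \Sigma _N\} \sim 2^{2Nd}$, which follows from a standard lattice-point count on dyadic annuli (take $k \in \Sigma _N$ and $m$ in a suitable rotated cone of positive solid angle so that $|-k-m| \sim 2^N$).
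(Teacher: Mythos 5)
Your sufficiency argument coincides with the paper's: reduce $\LR{fg,h}_{L^2}$ to a trilinear sum over $n+k+m=0$ and apply Lemma~\ref{lem:RC} with $\La$ the full convolution hyperplane and $\rho =d$ (replacing $h$ by $\bar h$ is harmless since $\tnorm{\bar h}{H^\ga}=\tnorm{h}{H^\ga}$). Your shell example for ruling out $\al +\be +\ga <\frac d2$ is also fine and parallels the paper's construction with balls.

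The necessity direction, however, has a concrete flaw in the test for the condition $\al +\be +\ga \ge \max \{ \al ,\be ,\ga \}$. Ordering $\al \le \be \le \ga$, this condition reads $\al +\be \ge 0$, and your proposed configuration --- one function on a high dyadic shell $\Sigma _N$ and the other two supported near frequency $\sim 1$ --- is incompatible with the constraint $n+k+m=0$: if one frequency has size $\sim 2^N$ and the other two are $O(1)$, no triple occurs in the sum, so the trilinear form vanishes and the test detects nothing. The correct configuration is the reverse: place the two functions carrying the exponents $\al ,\be$ at high, nearly opposite frequencies and the function carrying the largest exponent $\ga$ at frequency $O(1)$, e.g.\ $\hhat f=\chf{\{ Ne_1\}}$, $\hhat g=\chf{\{ (-N-1)e_1\}}$, $\hhat h=\chf{\{ e_1\}}$, so that the left-hand side is $1$ while the right-hand side is $\sim N^{\al +\be}\to 0$ when $\al +\be <0$; this is exactly the paper's first counterexample. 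In addition, the borderline case $\al +\be +\ga =\frac d2$ with $\al +\be =0$ --- the only point where strictness in (i) genuinely needs testing, since when $\al +\be >0$ condition (ii) already grants the estimate --- is the delicate one, and your one-line ``superpose $\sim \log R$ dyadic scales'' gestures at the right mechanism but is not carried out: you must say which function carries the superposition and extract the logarithmic mismatch. The paper does this by taking $\hhat f,\hhat g$ indicators of balls of radius $N$ centered at $\pm 2Ne_1$ and $\hhat h(n)=|n|^{-d}$ on $0<|n|\le N$, giving $\tnorm{h}{H^{d/2}}\sim (\log N)^{1/2}$ against a trilinear sum of size $\sim N^d\log N$, hence a $(\log N)^{1/2}$ divergence. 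With these two repairs your outline matches the paper's proof.
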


\begin{proof}
The `if' part follows from Lemma~\ref{lem:RC} with $\La =\shugo{k+m+n=0}$ and $\rho =d$.

To show the `only if' part, we assume $\al \le \be \le \ga$ without loss of generality.
It suffices to show that the quantity
\eqq{I(f,g,h):=\frac{\Big| \sum\limits _{\mat{k,n,m\in \Bo{Z}^d\\ k+m+n=0}}\hhat{f}(k)\hhat{g}(m)\hhat{h}(n)\Big|}{\tnorm{f}{H^\al}\tnorm{g}{H^\be}\tnorm{h}{H^{\ga}}}}
does not have an upper bound in each of the following cases:
(a) $\al +\be <0$, (b) $\al +\be +\ga <\frac{d}{2}$, (c) $\al +\be =0$ and $\ga =\frac{d}{2}$.

Let $N\gg 1$ be a positive integer and $e_1:=(1,0,\dots ,0)\in \Bo{Z}^d$.
In (a), we define
\eqq{\hhat{f}:=\chi _{\{ Ne_1\}},\quad \hhat{g}:=\chi _{\{ (-N-1)e_1\}},\quad \hhat{h}:=\chi _{\{ e_1\}},}
so that
\eqq{I(f,g,h)\sim \frac{1}{N^{\al}\cdot N^{\be}\cdot 1}\to \I \qquad (N\to \I ).}
In (b), we can use 
\eqq{\hhat{f}=\hhat{g}:=\chi _{\{ n\in \Bo{Z}^d\,|\,|n-2Ne_1|\le N\}},\quad \hhat{h}:=\chi _{\{ n\in \Bo{Z}^d\,|\,|n+4Ne_1|\le 2N\}},}
so that
\eqq{I(f,g,h)\gec \frac{N^{2d}}{N^{\al +\frac{d}{2}}\cdot N^{\be +\frac{d}{2}}\cdot N^{\ga +\frac{d}{2}}}=N^{\frac{d}{2}-(\al +\be +\ga )}\to \I \qquad (N\to \I ).}
In (c), we take
\eqs{\hhat{f}=\chi _{\{ n\in \Bo{Z}^d\,|\,|n-2Ne_1|\le N\}},\quad \hhat{g}:=\chi _{\{ n\in \Bo{Z}^d\,|\,|n+2Ne_1|\le N\}},\\
\hhat{h}:=|\cdot |^{-d}\chi _{\{ n\in \Bo{Z}^d\,|\,0<|n|\le N\}},}
so that
\eqq{I(f,g,h)\gec \frac{N^d\log N}{N^{\al +\frac{d}{2}}\cdot N^{\be +\frac{d}{2}}\cdot (\log N)^{\frac{1}{2}}}=(\log N)^{\frac{1}{2}}\to \I \qquad (N\to \I ),}
which finishes the proof.
\end{proof}

\subsection{Scaling invariance and optimality of the result}\label{Fourier-Lebesgue}

We first recall the scaling invariance of the fractional Navier-Stokes equations.
If $(u,p)$ is a solution of \eqref{NSC} with $\Om =0$, then $(u_\la ,p_\la )$ with
\eqq{u_\la (t,x):=\la ^{2\al -1}u(\la ^{2\al}t,\la x),\quad p_\la (t,x):=\la ^{4\al -2}p(\la ^{2\al}t,\la x),\qquad \la >0}
is also a solution with rescaled initial data
\eqq{u_{0,\la}(x):=\la ^{2\al -1}u_0(\la x)}
with $\mathrm{div}\,u_{0,\la}=0$.
Although such a rescaling changes the period of spatial domain, one can still consider the scaling critical regularity $s_c$ for which $\tnorm{u_{0,\la}}{\dot{H}^{s_c}(\mathbb{R}^d)}=\tnorm{u_{0}}{\dot{H}^{s_c}(\mathbb{R}^d)}$ for any $\la >0$.
We find that
\eqq{s_c=\frac{d}{2}+1-2\al .}

By the scaling heuristics, the local-in-time theory may be developed in $H^s$ for $s\ge s_c$ (sub-critical and critical regularities); equivalently, in $H^s$ with a fixed regularity $s$ for
\eq{condition1}{\al \ge \frac{d+2-2s}{4}.} 
For instance, local theory in $H^1$ requires $\al \ge \frac{1}{2}$ in 2D and $\al \ge \frac{3}{4}$ in 3D, as observed in the proof of Theorem~\ref{prop:lwp}.

Another regularity restriction may arise in the global a priori estimate for the limit equation \eqref{limit}.
For the 2D part $\bbar{U}(t)$, one can apparently gain one spatial derivative through the vorticity formulation; in fact, the nonlinear term $(\bbar{U}^h \cdot \nabla ^h)\om$ has the same scaling as square of $\om$ with no derivative.
Then, the 2D part may have an $H^s$ global a priori bound if the dissipation $\langle (-\Delta )^\al \om ,\om \rangle _{H^s}$ dominates the nonlinearity $\langle \om \om ,\om \rangle _{H^s}$ in the energy estimate.
Regarding an extra $w$ in the nonlinearity as extra $\frac{d}{2}$ derivatives (by the scaling heuristics $L^\I \sim H^{\frac{d}{2}}$) and compare the total number of derivatives in these terms, we find the condition
\eqq{2\al +2s>2s+\frac{d}{2}.}
Since $\bbar{U}$ behaves as a 2D flow, we set $d=2$ to come to the condition $\al >\frac{1}{2}$.

For the oscillating part $U_{\mathrm{osc}}$, the nonlinearity, which is quadratic with one derivative, has $1+\e$ dimensional interactions, as suggested in Lemma~\ref{lem:c2}.
We compare the number of derivatives just as above, but with $d=1+\e$, to see that the condition
\eq{condition2}{2\al +2s>2s+1+\frac{1+\e}{2}\quad \Leftrightarrow \quad \al >\frac{3}{4}}
is required for an $H^s$ global control on $U\osc$.

We remark that our result (Theorem~\ref{main}), global regularity in $H^1$ for $\al >\frac{3}{4}$, is optimal in both \eqref{condition1} and \eqref{condition2}.
In other words, by \eqref{condition1} one needs regularity $H^1$ to deal with $\al$ arbitrarily close to $\frac{3}{4}$; however, one may not relax the condition on $\al$ due to the restriction \eqref{condition2} even if the initial data is more regular than $H^1$. 
That is exactly why we work in $H^1$ in this article.

Hence, if we could prove Lemma~\ref{lem:c2} with just $CL^\e$ in the right-hand side, then the restriction on $\al$ would be relaxed to $\al >\frac{1}{2}$.
In this case, however, one has to work with higher regularity $H^{\frac{3}{2}}$, due to \eqref{condition1}.
Another natural space to work in is the Fourier-Lebesgue space $\F ^{-1}\ell ^1(\T ^3)$ defined in Remark~\ref{rem:Fourier-Lebesgue}.
This space has the same scaling as $H^{\frac{3}{2}}(\T ^3)$, while it is an algebra and continuously embedded into the space of (bounded uniformly) continuous functions on $\T ^3$.

\bigskip

\end{document}